\newcommand {\R}	{\mathbb{R}}
\newcommand {\N}	{\mathbb{N}}
\newcommand {\C}	{\mathbb{C}}
\DeclareMathOperator{\IBC}{IBC}
\newcommand{\HIBC}{H_{\IBC}^{0,1}}
\newcommand{\HIBCab}{H_{\IBC}^{\alpha,\beta}}
\newcommand{\HIBCb}{H_{\IBC}^{0,\beta}}
\newcommand{\Glambda}{G_\lambda}
\newcommand{\Glambdaab}{G_\lambda^{\alpha,\beta}}
\DeclareMathOperator{\rg}{rg}
\DeclareMathOperator{\Id}{Id}
\renewcommand{\H}{\mathscr{H}}
\newcommand{\dH}{\partial \H}
\renewcommand{\epsilon}{\varepsilon}
\setlist[enumerate]{font = \normalfont}
\setlist[enumerate]{font = \normalfont}
\theoremstyle{plain}
\newtheorem{thm}{Theorem}[section]
\newaliascnt{cor}{thm}
\newaliascnt{prop}{thm}
\newaliascnt{lem}{thm}
\newtheorem{cor}[cor]{Corollary}
\newtheorem{prop}[prop]{Proposition}
\newtheorem{lem}[lem]{Lemma}
\newcounter{stp}
\newcounter{stpi}
\newcounter{stpci}
\newcounter{stpiii}
\theoremstyle{theorem}
\newtheorem{step}[stp]{Step}
\theoremstyle{definition}
\newaliascnt{defn}{thm}
\newaliascnt{asu}{thm}
\newaliascnt{con}{thm}
\newaliascnt{sett}{thm}
\newaliascnt{constr}{thm}
\newtheorem{defn}[defn]{Definition}
\newtheorem{asu}[asu]{Assumption}
\newtheorem{sett}[sett]{Abstract Setting}
\newtheorem{constr}[constr]{Construction}
\theoremstyle{remark}
\newaliascnt{rem}{thm}
\newaliascnt{exa}{thm}
\newaliascnt{masu}{thm}
\newaliascnt{nota}{thm}
\newtheorem{rem}[rem]{Remark}
\newtheorem{exa}[exa]{Example}
\newtheorem{nota}[nota]{Notation}
\numberwithin{equation}{section}
\date{\today}%
\begin{document}

\title{An abstract framework for interior-boundary conditions}

\author{Tim Binz}
\address{Tim Binz,  Princeton University, Program in Applied and Computational Mathematics,
Fine Hall, Washington Road, 08544 Princeton NJ, USA}
\email{tb7523@princeton.edu}

\author{Jonas Lampart}
\address{Jonas Lampart, CNRS \& LICB, UMR 6303 Université Bourgogne Europe, 9 Av. A. Savary, 21078 Dijon, France}
\email{lampart@math.cnrs.fr}

\date{\today}%

\begin{abstract}
In a configuration space whose boundary can be identified with a subset of its interior, a boundary condition can 
relate the boundary values of a function to its values in the interior. Additionally, boundary values can 
appear as additive perturbations. 
Such boundary conditions have recently provided insight into problems from quantum field theory.
We discuss interior-boundary conditions in an abstract setting, with a focus on self-adjoint operators,  proving self-adjointness criteria, resolvent formulas, and a classification theorem.
\end{abstract}

\maketitle

\section{Introduction}

Consider a differential operator on a configuration space consisting of a disjoint union of manifolds with boundary (or 
corners) of different dimensions,
\begin{equation}
 M=\bigsqcup\limits_{n=1}^N M_n.
\end{equation}
If there is a map $\iota_n:\partial M_n \to M_{n-1}$, a boundary condition may relate boundary values of  a function on 
$M_n$ to to the values on $M_{n-1}\supset \iota_n (\partial M_n)$. We call such a boundary 
condition an interior-boundary condition following Teufel and Tumulka~\cite{TeTu21, TeTu16, tumulka2020}. If, for 
example, $\iota_n$ is bijective, we may also add boundary value operators, such as $f\vert_{\partial M_n}\circ\iota_n 
^{-1}$ as perturbations to the differential operator. This gives rise to a coupled system of equations for functions 
$f_j=f\vert_{M_j}$.

A simple example of such a setup is obtained by taking $M_n=(\R_+)^n$ for $n= 0,1$, i.e., the half-line $M_1=\R_+$, with boundary $M_0=\{0\}=\partial M_1$. As a differential operator $L$ on $M=M_0\sqcup M_1$ we may take the (negative) Laplacian on
$\R_+$, extended to $M$ by setting $Lf\vert_{M_0}=0$. Taking a self-adjoint boundary condition for the Laplacian gives rise to decoupled equations for $f_0, f_1$.
However, one can couple the two functions by a boundary condition such as
$f'_1(0)=f_0$, which gives rise to a symmetric operator if one adds boundary values to the differential expression in the following way.
Consider the operator
\begin{equation}
 H(f_0,f_1)= (f_1(0), -f''_1) = L f + If_1(0),
\end{equation}
subject to this boundary condition ($I$ denotes the inclusion of the first summand). Then $H$ is symmetric with respect to the canonical scalar product on $L^2(M)\cong
\C\oplus L^2(\R_+)$, since
\begin{align}
 \langle f, Hf \rangle &= \overline f_0 f_1(0) - \int_0^\infty \overline{f_1}(x) f_1''(x) dx \\
&= \int_0^\infty|f_1'(x)|^2 dx + \overline f_0 f_1(0) + \overline f_1(0) f_1'(0)\notag\\
& =  \int_0^\infty|f_1'(x)|^2 dx + 2\mathrm{Re}(\overline f_0 f_1(0)) \in \R,\notag
\end{align}
and one easily checks that $H$ is self-adjoint on a suitable domain~\cite{yafaev1992}.

We will work in a more abstract framework, where the configuration space actually plays no role. We rather consider directly two Hilbert spaces $\H$ 
and $\dH$, where the first would correspond to $L^2(M)$ and the second to $L^2(\partial M)$.
Consider a densely defined ``maximal'' operator $L_m$ on $\H$ and two ``boundary value operators'' $A_m, B$ mapping (a 
subset of) $D(L_m)$ to $\dH$. This is a standard setup for abstract boundary value problems, see \cite{Gre:87, ABE:13, ABE:16, AE:18},
and dynamical boundary conditions, see \cite{CENN:03, EF:05,BE:19}.
Further, on Hilbert spaces, such abstract boundary problems are related to the theory of quasi boundary triples, see \cite{BeLa2007, BeGeNa17, BHS:20} and \hyperref[appendix]{Appendix \ref*{appendix}}.

In addition to the usual ingredients, we assume that we are given a bounded operator 
\begin{equation*}
 I:\dH \to \H.
\end{equation*}
This operator is the characteristic 
feature of the interior-boundary conditions, since it allows for the formulation of conditions relating elements of 
$\H$ and $\dH$.  
Most of the theory we develop reduces to the usual theory of boundary conditions with the choice $I=0$.
Non-trivial examples where such a structure is relevant are hierarchies of boundary value problems. In this case, $\H$ is
a finite or countable direct sum of spaces $\H_n$, $L_m$ is an operator on $\H_n$ for each $n$, $A_m,B$ map $\H_n$ to 
$\partial \H_n$, and $I:\partial \H_n \to \H_{n-1}$ is an isomorphism. On this space we can consider operators such as 
$H=L_m + IA_m$ subject to boundary conditions, such as $Bf=I^*f$. Spelling out the equation $Hf=g$ on $\H_n$, it reads
\begin{equation}
 L_m f_n + I A_m f_{n+1} = g_n,
\end{equation}
with the boundary condition
\begin{equation}
 Bf_n= I^*f_{n-1}.
\end{equation}
The unknown $f_n$ is thus coupled to $f_{n-1}$ by the boundary condition and to $f_{n+1}$ by the operator $IA:\H_{n+1} \to \H_n$.

Formulations of different models from quantum field theory (QFT) in terms of such hierarchies have been 
proposed by Landau and Peierls~\cite{landau1930}, Moshinsky and Lopez~\cite{moshinsky1991}, Yafaev~\cite{yafaev1992}, 
Teufel and Tumulka~\cite{TeTu21, TeTu16}. There, $\H_n$ is the kinematical  Hilbert space for $n$ (indistinguishable) 
particles and possibly some additional particles of a different type, e.g., $n$ photons and a fixed number of electrons.
The spaces with different numbers of particles are coupled since, in quantum field theory, particle numbers are not 
conserved. In this context $A_m$ is closely related to the so-called annihilation operator (or a power thereof), an unbounded operator that 
reduces particle-number by one, and the boundary condition incorporates the process of particle creation.
Similar (finite) hierarchies have also been studied as models for nuclear reactions~\cite{moshinsky1951, 
moshinsky1951b, moshinsky1951c, thomas1984}.

In quantum mechanics, the dynamics of a system are generated by a self-adjoint operator. However, constructing these 
and proving self-adjointness for quantum field theoretic models poses many difficulties. 
One of these is the problem of ultra-violet singularities that stem from the distributional nature of the interactions.
Interior-boundary conditions have proved to be an effective way of addressing these singularities. They provide an 
alternative to renormalisation techniques going back to Nelson~\cite{nelson1964} and Eckmann~\cite{eckmann1970}, with 
the benefit of giving a direct description of the domain of self-adjointness~\cite{KeSi16, IBCpaper, LaSch19, 
IBCmassless, IBCrelat}.
These ideas were later extended to more singular models by the second author~\cite{La19, La20, La23}.
Other models with interior-boundary conditions were investigated in~\cite{tumulka2005, lienert2019, schmidt2019, HLM25}.

These constructions of self-adjoint operators are related to singular number-preserving interactions that 
can be described by (generalised) boundary conditions and classified in terms of self-adjoint extensions of certain 
``minimal'' operators (see e.g.~\cite{albev, Co_etal15, Be_etal2017, posi08}, and references therein). Such methods were applied to an abstract form of interior-boundary conditions by Posilicano~\cite{posi20} (see~\autoref{rem:posi} for comparison to our approach).

Our goal in this article is to develop a general theory of interior-boundary conditions.
In \hyperref[sect:frame]{Section \ref*{sect:frame}} we explain the abstract framework. In \hyperref[sect:Robin]{Section \ref*{sect:Robin}} we discuss operators with ``Robin type'' boundary conditions of the form $\alpha A_mf+ \beta Bf=I^*f$, their symmetry and self-adjointness.
In \hyperref[sect:ext]{Section \ref*{sect:ext}} we consider more general boundary conditions. In particular, we construct a quasi boundary triple (see~\hyperref[appendix]{Appendix \ref*{appendix}}) that allows us to relate operators with different boundary condtions and classify certain self-adjoint conditions. 
In \hyperref[sect:applications]{Section \ref*{sect:applications}} we give a non-trivial example to which our theory applies.

\section{Abstract Framework}\label{sect:frame}

In this section we introduce an abstract framework to formulate interior-boundary conditions and some notational conventions.
 
\begin{nota}
Let $X$ and $Y$ Banach spaces and $T \colon D(T) \subset X \to  Y$ a densely defined operator. 
For $\lambda \in \rho(T)$, we denote the resolvent of $T$ by 
\begin{equation}
 R(\lambda, T):=(\lambda - T)^{-1} \in \mathcal{L}(Y,X).
\end{equation}
For $\lambda \in \C\setminus \left(\sigma_p (T)\cup\sigma_r(T) \right)$, the algebraic inverse of $\lambda-T$ is a densely defined operator with $D((\lambda-T)^{-1})=\rg(\lambda-T)$, for which we use the notation $(\lambda-T)^{-1}$.

If $X$ and $Y$ are Hilbert spaces, we denote the adjoint of $T$ by $T^* :D(T^*) \subseteq Y \to X$. 

Moreover let $Z$ another Banach space and $S \colon D(S) \subset Y \to Z$ a densely defined operator. The composition $S T \colon D(ST) \subset X \to Z$ is the (not necessarily densely defined) operator given by
\begin{align}
	(ST) x := S(Tx), \qquad D(ST) := \{ x \in D(T) \colon Tx \in D(S) \} .
\end{align}
 \end{nota}

\begin{sett}\label{sett:abstract setting}
As a starting point of our investigation, we assume that we are given the following objects:
	\begin{enumerate}[(i)]
		\item two Hilbert spaces $\H$ and $\dH$;
		\item a ``maximal'' operator $L_m \colon D(L_m) \subset \H \to \H$;
		\item a trace operator $B \colon D(L_m) \subset \H \to \dH$;
		\item a boundary operator $A_m \colon D(A_m) \subset \H \to \dH$;
		\item a bounded ``identification''  operator $I \colon \dH \to \H$.
	\end{enumerate}
\end{sett}

We then denote by $L$ the restriction of $L_m$ to the kernel of $B$
\begin{equation}
 L:=(L_m)|_{\ker(B)}
\end{equation}
and also
\begin{equation}
 A := (A_m)|_{\ker(B)}.
\end{equation}


We assume these to have the following properties.

\begin{asu}\label{ass:general}
	\begin{enumerate}[(a)]
		The operators $L, A, B$ satisfy:
		\item $L$ is self-adjoint
		\item 
		the operator $A$ is relatively $L$-bounded;
		\item 
		for $\lambda \in \rho(L)$: $\rg\Big((A R(\lambda, L))^*\Big) \subseteq \ker(\overline{\lambda} - L_m)$
		;
		\item for $\lambda \in \rho(L)$: $B (A R(\lambda, L))^* = \Id_{\dH} \label{eq:B,L,A}$. 
	\end{enumerate}
\end{asu}

\begin{rem}\label{rem:1}
	Note that \autoref{ass:general}~(d) implies that $(A R(\lambda, L))^*$ is injective and that $B$ is surjective. 
	Since, in general, $\rg(T)^\perp= \ker(T^*)$, $\rg(A) \subset \dH$ is dense. 
\end{rem}

\begin{defn}
	For $\overline{\lambda} \in \rho(L)$ we define the \emph{abstract Dirichlet operator} associated with $\lambda$ as 
	\begin{equation}
	\Glambda := (A R(\overline{\lambda}, L))^* .
	\end{equation} 
	Moreover we define the \emph{abstract Dirichlet-to-Neumann operator} associated with $\lambda$ by
	\begin{equation}
	T_\lambda := A_m \Glambda , \quad
	D(T_\lambda) = \{ \varphi \in \dH\colon \Glambda \varphi \in D(A_m) \} .
	\end{equation}
\end{defn}

\begin{exa}
 An example where our assumptions are satisfied is obtained from the Laplacian on the half line. Let $\H=\C \oplus L^2(\R_+)$, $\partial\H=\C$ and $I:\C\to \H$ be inclusion of the first summand. If we let $L_m(f,z)=(0,-\partial_x^2f)$ with $D(L_m)=\C\oplus H^2(\R_+)$, and choose $B(f,z)=f(0)$ to be the Dirichlet boundary value and $A(f,z)=-f'(0)$ as the negative Neumann value (the roles of these two may also be exchanged). We have $D(L)=\C\oplus \big(H^2(\R_+)\cap H^1_0(\R_+)\big)$ so $L$ is essentially the Dirichlet Laplacian, which is self-adjoint. The Neumann value is controlled by $L$ as a consequence of the Sobolev embedding, and $G_\lambda z= (0,z  e^{-\sqrt{-\lambda} x})\in \ker (\lambda-L_m)$ (for $\lambda\in \C\setminus [0, \infty)$, where $\mathrm{Re}\sqrt{-\lambda}>0$) and $BG_\lambda z=z$, so Assumption~\ref{ass:general} is satisfied.
\end{exa}

\begin{rem}
	Note that by \autoref{ass:general}~(b) the abstract Dirichlet operator is bounded, $\Glambda \in \mathcal{L}(\dH,\H)$. Further by \autoref{ass:general}~(c) it satisfies $\rg(\Glambda) \subseteq \ker(\lambda-L_m)$ and by \autoref{ass:general}~(d) it is the right-inverse of $B$. 
	Our definition thus coincides with the common definition of a Dirichlet operator in the literature, e.g.~\cite{Gre:87}.
\end{rem}

The next Proposition collects some direct consequences of our general assumptions that will play an important role throughout.

\begin{prop}\label{prop:rechenregeln}
Under~\autoref{ass:general} we have:
	\begin{enumerate}[(i)]
		\item 	
		The domain $D(L_m)$ of the maximal operator can be decomposed into
		\begin{equation}
		D(L_m) = D(L) \oplus \ker(\lambda - L_m) . \label{eq:decomposition}
		\end{equation} 
		The projections are given by $\Glambda B \colon D(L_m) \to \ker(\lambda-L_m)$ and $(\Id_{D(L_m)}-\Glambda B)~\colon~D(L_m)~\to~D(L)$. 
		\item 
		The following identity holds
		\begin{equation*}
		G_\lambda^* (\overline{\lambda} - L) f = A f 
		\end{equation*}
		for $f \in D(L)$. 
		\item For $\lambda, \mu \in \rho(L)$ the domains of the Dirichlet-to-Neumann operators coincide, i.~e.~$D(T_\lambda) = D(T_\mu)$ (we will thus simply denote this domain by $D(T)$). Moreover their difference, given by
		\begin{equation*}
		T_\lambda - T_\mu = (\mu- \lambda) A R(\mu, L) \Glambda,
		\end{equation*}
		is bounded.
	\end{enumerate}
\end{prop}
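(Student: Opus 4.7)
The plan is to treat the three parts in order, since each one follows rather quickly from \autoref{ass:general} together with a small amount of bookkeeping about adjoints and the resolvent.

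For \emph{(i)} I would exhibit the decomposition explicitly: given $f\in D(L_m)$, write
\begin{equation*}
 f = \bigl(f-\Glambda B f\bigr) + \Glambda B f .
\end{equation*}
\autoref{ass:general}\,(c) gives $\Glambda Bf\in\ker(\lambda-L_m)$, while \autoref{ass:general}\,(d) yields $B\Glambda=\Id_{\dH}$, so that $B(f-\Glambda Bf)=Bf-Bf=0$ and hence $f-\Glambda Bf\in\ker(B)=D(L)$. This also identifies the two projections. Directness of the sum is immediate: if $f\in D(L)\cap\ker(\lambda-L_m)$ then $f\in D(L)$ and $(\lambda-L)f=(\lambda-L_m)f=0$, whence $f=0$ because $\lambda\in\rho(L)$.

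For \emph{(ii)} the point is that \autoref{ass:general}\,(b) makes $AR(\overline\lambda,L)\colon\H\to\dH$ everywhere defined and bounded: indeed $R(\overline\lambda,L)\in\mathcal L(\H,\H)$ has range in $D(L)\subset D(A)$, and $A$ is bounded from $D(L)$ (with graph norm) into $\dH$. A bounded everywhere defined operator is closed, so $\Glambda^{*}=(AR(\overline\lambda,L))^{**}=AR(\overline\lambda,L)$. For $f\in D(L)$ one then computes
\begin{equation*}
 \Glambda^{*}(\overline\lambda-L)f=AR(\overline\lambda,L)(\overline\lambda-L)f=Af .
\end{equation*}

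For \emph{(iii)} I would start from the resolvent identity $R(\overline\lambda,L)-R(\overline\mu,L)=(\overline\mu-\overline\lambda)R(\overline\lambda,L)R(\overline\mu,L)$, multiply by $A$ on the left, and take adjoints. Using that $L$ is self-adjoint (so $R(\overline\mu,L)^{*}=R(\mu,L)$), this gives
\begin{equation*}
 \Glambda-G_\mu=(\mu-\lambda)R(\mu,L)\Glambda .
\end{equation*}
The right-hand side has range in $D(L)\subseteq D(A)\subseteq D(A_m)$, so the conditions $\Glambda\varphi\in D(A_m)$ and $G_\mu\varphi\in D(A_m)$ are equivalent; this proves $D(T_\lambda)=D(T_\mu)$. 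Applying $A_m$ to the displayed identity and using that $A_m$ restricted to $D(L)$ agrees with $A$ yields $T_\lambda-T_\mu=(\mu-\lambda)AR(\mu,L)\Glambda$, which is bounded since each factor is.

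There is no real obstacle; the only subtlety worth being explicit about is the identification $\Glambda^{*}=AR(\overline\lambda,L)$, which is what turns the otherwise abstract duality between $\Glambda$ and $A$ into the concrete computational identities used in (ii) and (iii).
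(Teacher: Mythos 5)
Your proof is correct and follows essentially the same route as the paper's: the explicit splitting $f=(f-\Glambda Bf)+\Glambda Bf$ together with injectivity of $\lambda-L$ in (i), and the resolvent identity plus self-adjointness of $L$ to get $G_\lambda-G_\mu=(\mu-\lambda)R(\mu,L)G_\lambda$ with range in $D(L)\subseteq D(A)$ in (iii). The only cosmetic difference is in (ii), where you identify $G_\lambda^{*}=AR(\overline\lambda,L)$ outright (legitimate, since that operator is everywhere defined and bounded by \autoref{ass:general}\,(b), hence closed and equal to its double adjoint), whereas the paper verifies the same identity weakly by pairing $(\overline\lambda-L)f$ against an arbitrary $g\in\H$.
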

\begin{proof}
	\begin{enumerate}[(i)]
		\item 
		Since $B G_\lambda = \Id_{\dH}$, $G_\lambda B$ and $(\Id_{D(L_m)} - G_\lambda B)$ are projections on $D(L_m)$, and (algebraically) $D(L_m) = \rg(G_\lambda B)  \oplus \rg(\Id-G_\lambda B)$. 
		\autoref{ass:general}~(b) means that the image satisfies $\rg(\Glambda B) \subseteq \ker(\lambda- L_m)$. 
 		Further, we clearly have 
 		\begin{equation}
 		\rg(\Id_{D(L_m)}-\Glambda B) \subseteq \ker(B)=D(L).
 		\end{equation}
		Therefore, using
		$\ker(\Glambda B) = \rg(\Id_{D(L_m)}-\Glambda B) \subseteq D(L)$, we obtain
			\begin{equation}
				D(L_m) = \rg(\Id_{D(L_m)}-\Glambda B) \oplus \rg(\Glambda B) 
				\subseteq D(L) + \ker(\lambda - L_m) \subseteq D(L_m).
			\end{equation}
		Since $\lambda \in \rho(L)$ by assumption, the latter sum is direct and we have  
			\begin{equation}
				D(L_m) = D(L) \oplus \ker(\lambda - L_m) . 
			\end{equation}
		\item The definition of $\Glambda$ implies for $f \in D(L), g \in \H$:
		\begin{align}
		\langle (\overline{\lambda} - L) f, G_\lambda g \rangle_{\H}
		&= \langle (\overline{\lambda} - L) f, (AR(\overline{\lambda},L))^* g \rangle_{\H} \notag\\
		&= \langle (AR(\overline{\lambda},L)) (\overline{\lambda} - L) f, g \rangle_{\H} \notag\\
		&= \langle A f , g \rangle_{\H}.
		\end{align}
		\item From resolvent resolvent identity it follows
		\begin{equation*}
		G_\lambda^* - G_\mu^*
		= A(R(\overline{\lambda},L)-R(\overline{\mu},L)) 
		=  (\overline{\mu}-\overline{\lambda}) A R(\overline{\lambda},L) R(\overline{\mu},L) .  
		\end{equation*}
		Using the self-adjointness of $L$ we conclude that
		\begin{equation}\label{eq:G-diff}
		G_\lambda - G_\mu 
		=  (\mu-\lambda) R(\overline{\mu},L)^* (A R(\overline{\lambda},L))^*
		= (\mu-\lambda)) R(\mu,L) G_\lambda . 
		\end{equation}
		Since $A$ is relatively $L$-bounded, the term on the right hand side satisfies
		\begin{equation*}
		\rg( (\mu-\lambda) R(\mu,L) G_\lambda ) \subseteq D(L) \subset D(A)
		\end{equation*}
		and the first claim follows. The identity for the difference follows from the definition of $T_\lambda$, $T_\mu$ and the boundedness from the fact that $A$ is $L$-bounded. 
	\end{enumerate}
\end{proof}

We now give a construction procedure that leads to operators $L_m$, $B$, $A_m$ etc. with the properties of \autoref{sett:abstract setting}.
This construction can be applied in many concrete cases. A non-trivial example is given below.
The logic here is somewhat different than in the definitions, in that we start with  the operators $L, A, I$ and $T_\lambda$ (for one, arbitrarily fixed $\lambda \in \rho(L)$). From these, $L_m$, $B$ and $A_m$ are constructed as follows.

\begin{constr}\label{construction}
	We are given two Hilbert spaces $\H$, $\dH$ and a bounded operator $I \colon \dH\to \H$. 
	Further, we have a self-adjoint operator $L \colon D(L) \subset \H \to \H$, a relatively $L$-bounded operator $A \colon D(A) \to \partial\H$ and a closed operator	$T \colon D(T) \subset \dH\to \dH$. 
	To construct the operators $L_m$, $B$ and $A_m$ we proceed by the following steps:
	\begin{step}
		Consider the ``minimal''  operator $L_0 \colon D(L_0) \subset \H\to \H$, defined by
		\begin{equation}
		L_0 f = L f, \qquad D(L_0) = D(L) \cap \ker(A) = \ker(A) .
		\end{equation}
	\end{step}
	\begin{step}\label{stp:2 constr}
		Assume that $\ker(A)$ is dense, so the adjoint $L_0^\ast$ is well defined.
		Let $\lambda \in \rho(L)$, and, since $L$ is self-adjoint, also $\overline{\lambda} \in \rho(L)$. 
		By \autoref{prop:rechenregeln}~(ii), the operator $\Glambda$ given by $\Glambda := (A R(\overline{\lambda},L))^*$
		satisfies
		\begin{equation}
		\langle (\lambda - L_0^\ast) \Glambda f , \varphi \rangle_{\H}
		= \langle f , \Glambda^*(\overline{\lambda}-L_0) \varphi \rangle_{\H}
		= \langle f , A \varphi \rangle_{\dH} = 0 
		\end{equation}
		for $f \in \H$ and $\varphi \in D(L_0) \subset \ker(A)$. So $\rg(\Glambda) \subseteq \ker({\lambda}-L_0^\ast)$ and, since $\lambda \in \rho(L)$, the operator 
		\begin{equation}
		L_m f := L_0^\ast f, \qquad D(L_m) := D(L) \oplus \rg(\Glambda)
		\label{eq:decomposition-constr} .
		\end{equation} 
		is well defined and \autoref{ass:general}~(c) is satisfied. Note that, since $\rg(G_\mu-G_\lambda)\subset D(L)$ by~\eqref{eq:G-diff}, the right hand side is independent of $\lambda$. 
	\end{step}
	\begin{step}
		Now we define $B \colon D(L_m) \to \dH$ as the left-inverse of $G_\lambda$, i.e.~using the unique decomposition $f=f_0+ \Glambda \varphi$, $f_0\in D(L)$, $\varphi \in \rg(G_\lambda)$, we set $Bf = B (f_0 + \Glambda \varphi) := \varphi$, which satisfies \autoref{ass:general}~(d), and $L=(L_m)|_{\ker(B)}$.
	\end{step}	
	\begin{step}
		Let $\lambda \in \rho(L)$ and $T \colon D(T) \subset \dH\to \dH$ a fixed operator. 
		We define the operator $A_m \colon D(A_m) \subset \H \to \dH$ by
		\begin{equation}
		A_m f := A f_0 + T \varphi ,\qquad D(A_m) = D(L) \oplus \Glambda D(T) \subset D(L_m) .
		\end{equation}
		Hence $A := (A_m)|_{\ker(B)}$, so $A_m$ extends $A$, and $T_\lambda = A_m \Glambda = T$. 
		The operators $T_\mu$ for $\mu\ne \lambda$ are then dertemined by~\autoref{prop:rechenregeln}~(iii).
	\end{step} 
\end{constr}

\setcounter{stp}{0}

The following example is essentially the model considered by Moshinsky~\cite{moshinsky1951, moshinsky1951b, moshinsky1951c} and Yafaev~\cite{yafaev1992}.

\begin{exa}[Moshinsky-Yafaev model]\label{exa:yaf}
	Set $\H=\C\oplus L^2(\R^3)$, $\dH=\C$, $I z = (z, 0)$.
	Define on $D(L)=\C\oplus H^2(\R^3)$
	\begin{equation}
	L(f,z)= (0,-\Delta f).
	\end{equation}
	Let $A : D(L)\to \dH$ be given by $A(f,z)=f(0)$.
	
	\begin{step}
		The operator $L_0$ is the restriction of $L$ to 
		$D(L_0)=\C\oplus H^2_0(\R^3\setminus\{0\})$.
		Note that this operator is densely defined. 
	\end{step}
	\begin{step}
		The domain of the adjoint is given by 
		\begin{equation}
		D(L_0^*)=D(L)\oplus \mathrm{span}(0,g_\lambda),
		\end{equation}
		with, for any $\lambda \in \rho(L)=\C\setminus\R_+$ (taking the branch of the square root with positive real part)
		\begin{equation}
		g_\lambda(x)=-\frac{e^{-\sqrt{-\lambda} |x|}}{4\pi |x|}.
		\end{equation}
		Moreover, we have $G_\lambda z = (0,z g_\lambda)$.
		Hence we set\footnote{Note that, in general, the operator $L_0^\ast$ is \glqq too big\grqq, in the sense that not all functions in $D(L_0^\ast)$ have boundary values in $\dH$, e.g. if $\dH=L^2(\partial M)$.}
		$L_m := L_0^\ast$,
		which acts as $L_0^*(f,z)= \big(0,-\Delta_0^*f\big)$, where $-\Delta_0^*$ is the adjoint of $-\Delta\vert_{H^2_0(\R^3\setminus \{0\})}$\end{step}
	\begin{step}
		The operator $B$, defined as the left-inverse of $G_\lambda$, is given by the formula
		\begin{equation}
		B(f,z)= -4\pi \lim_{x\to 0}|x|f(x). 
		\end{equation}
	\end{step}
	\begin{step}
		Since $g_\lambda$ is not continuous in $x=0$ we cannot define $A_m$ as the evaluation at $x=0$. 
		However, the following formula, which extends the evaluation, is well defined on $D(L_m)$
		\begin{equation}
		A_m(f,z)= A_mf :=\lim_{r\to 0} \partial_r  r\frac{1}{4\pi } \int_{S^2}f(r \omega) d\omega.
		\end{equation}
		This yields the formula for $T_\lambda:\C\to \C$
		\begin{equation}
		T_\lambda = A_m G_\lambda =\lim_{r\to 0}\partial_r  \left(-\frac{e^{-\sqrt {-\lambda} r}}{4\pi} \right)  
		=\frac{\sqrt{-\lambda}}{4\pi}.
		\end{equation}
	\end{step}	
	
With this framework in place, the operators with interior-boundary conditions take the form 
\begin{align}
 \HIBCab (f , z )
 &=( -\Delta_0^* f ,  \gamma A_m f + \delta B f )\\
 D(\HIBCab)&=\{ (f,z)\in D(\Delta_0^*)\oplus \C: \alpha A_m f + \beta Bf =z \},
\end{align}
with complex numbers $\alpha, \beta, \gamma, \delta \in \C$.
One easily checks that these operators are symmetric iff $\bar{\alpha} \gamma, \bar{\beta} \delta \in \R$ and $\beta \bar{\gamma} - \bar{\alpha} \delta = 1$ (see also~\autoref{lem:symmetry}). It is also not difficult to show that these symmetric operators are self-adjoint, see~\cite{yafaev1992}.
Note that if instead of our choice of $I$ we would have taken $I=0$, we would have found the ``Laplacian with $\delta$-potential''~\cite{albev}, which is a well known example in the theory of singular boundary value problems.
We will use this example throughout the article to illustrate our results.
\end{exa}

\section{Interior-Boundary Conditions of Robin Type}\label{sect:Robin}

In this section we will discuss a simple family of interior-boundary conditions in which the boundary operators $A_m$ and $B$ are related to the values in the interior simply by by some constants, exactly as in~\autoref{exa:yaf} (more general conditions are considered later, in Section~\ref{sect:ext}). We then investigate symmetry and self-adjointness of these operators and prove various formulas for their resolvents.

Here, as always, we work within the framework introduced in~\autoref{sett:abstract setting} and~\autoref{ass:general}.

\begin{defn}
	Let $\alpha, \beta, \gamma, \delta \in \C$. The operators with \emph{ interior-boundary conditions} (abbreviated IBCs) 
	of type $(\alpha, \beta)$, denoted $\HIBCab \colon D(\HIBCab) \subset \H \to \H$  are defined by
		\begin{align}\label{eq:HIBCab}
			\HIBCab f &:= L_m f + \gamma I A_m f + \delta I B f, \\
			D(\HIBCab) &:= \{ f \in D(L_m) \cap D(A_m) \colon \alpha A_m f + \beta B f = I^* f \} . \notag  
		\end{align}	
\end{defn}

Note that $\HIBCab$ is really a family of operators depending on $\gamma, \delta$. However, since the values of $\gamma, \delta$ play only a minor role we suppress them in the notation. 
Note also that, up to a bounded perturbation, we can always assume that $\delta=0$ (if $\beta\neq 0)$ or $\gamma=0$ (for $\alpha\neq 0$), since by the boundary condition (e.g. for $\beta\neq 0$)
\begin{equation*}
 \delta I B \vert_{D(\HIBCab)} = \delta \beta^{-1} \left(   I I^* - \alpha I A_m\right)\vert_{D(\HIBCab)}.
\end{equation*}

\subsection{Symmetry}

We start by investigating the elementary properties of $\HIBCab$, in particular symmetry. For this we will make the following additional assumption for the remainder of the article.

\begin{asu}
	For all $\lambda \in \rho(L)$, we have $T_{\overline{\lambda}} \subset T_\lambda^\ast$. In particular, $T_\lambda$ is symmetric on $\dH$ for $\lambda \in \R\cap \rho(L)$. 
\end{asu}

Note that, since $L$ is self-adjoint by~\autoref{ass:general}~a), $\lambda \in \rho(L)$ implies $\overline{\lambda} \in \rho(L)$, so the assumption makes sense.
By~\autoref{prop:rechenregeln}~(iii), if $T_{\overline{\lambda}} \subset T_\lambda^\ast$ for one $\lambda\in \rho(L)$, then this automatically holds for all $\lambda \in \rho(L)$.

With this assumption, it is easy to show an \emph{abstract Green identity}, which essentially generalises integration-by-parts for Laplace-type operators to our abstract setting. 

\begin{lem}\label{lem:integration by parts}
The following identity holds for all $f, g \in D(L_m) \cap D(A_m)$
	\begin{align*}
	\langle L_m f , g \rangle_\H - \langle f, L_m g \rangle_{\H} 
	= \langle B f, A_m g \rangle_{\dH} - \langle A_m f , Bg \rangle_{\dH} . \label{eq:integration by parts}
	\end{align*}
\end{lem}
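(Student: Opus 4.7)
The plan is to reduce the identity to the self-adjointness of $L$ on $D(L)$ together with the standing assumption $T_{\overline{\lambda}}\subset T_\lambda^*$, by exploiting the domain decomposition of \autoref{prop:rechenregeln}(i). Fix any $\lambda\in\rho(L)$ (such a $\lambda$ exists because $L$ is self-adjoint, so $\C\setminus\R\subset\rho(L)$) and write $f=f_0+\Glambda\varphi$, $g=g_0+\Glambda\psi$ with $f_0,g_0\in D(L)$, $\varphi=Bf$, $\psi=Bg$. Since $A$ is relatively $L$-bounded we have $D(L)\subseteq D(A_m)$, so the hypothesis $f,g\in D(A_m)$ forces $\varphi,\psi\in D(T_\lambda)$, with $A_m f=Af_0+T_\lambda\varphi$ and $A_m g=Ag_0+T_\lambda\psi$.

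Next I would expand the left-hand side using $L_m\Glambda\varphi=\lambda\Glambda\varphi$ (since $\Glambda\varphi\in\ker(\lambda-L_m)$). Four inner products appear. The term $\langle Lf_0,g_0\rangle-\langle f_0,Lg_0\rangle$ vanishes by self-adjointness of $L$ on $D(L)$. Each of the two ``mixed'' pieces collapses via \autoref{prop:rechenregeln}(ii): for instance,
\begin{equation*}
\langle Lf_0,\Glambda\psi\rangle-\lambda\langle f_0,\Glambda\psi\rangle=-\langle(\overline{\lambda}-L)f_0,\Glambda\psi\rangle=-\langle \Glambda^*(\overline{\lambda}-L)f_0,\psi\rangle=-\langle Af_0,\psi\rangle,
\end{equation*}
and the companion piece similarly produces $\langle\varphi,Ag_0\rangle$. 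What remains is the ``diagonal'' contribution $(\overline{\lambda}-\lambda)\langle\Glambda\varphi,\Glambda\psi\rangle$.

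For the right-hand side, expanding $A_m$ yields the same $\langle\varphi,Ag_0\rangle-\langle Af_0,\psi\rangle$ plus the defect $\langle\varphi,T_\lambda\psi\rangle-\langle T_\lambda\varphi,\psi\rangle$. Using $D(T_\lambda)=D(T_{\overline{\lambda}})$ from \autoref{prop:rechenregeln}(iii), the standing assumption $T_{\overline{\lambda}}\subset T_\lambda^*$ rewrites $\langle T_\lambda\varphi,\psi\rangle$ as $\langle\varphi,T_{\overline{\lambda}}\psi\rangle$, so the defect becomes $\langle\varphi,(T_\lambda-T_{\overline{\lambda}})\psi\rangle$. Finally, \autoref{prop:rechenregeln}(iii) with $\mu=\overline{\lambda}$, combined with the identification $AR(\overline{\lambda},L)=\Glambda^*$ (from boundedness of $AR(\overline{\lambda},L)$), gives $T_\lambda-T_{\overline{\lambda}}=(\overline{\lambda}-\lambda)\Glambda^*\Glambda$, so the defect equals $(\overline{\lambda}-\lambda)\langle\Glambda\varphi,\Glambda\psi\rangle$ — exactly matching the diagonal term from the left-hand side.

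The main subtlety is bookkeeping: tracking complex conjugates when pulling scalars through the inner product, and verifying that each summand of the decomposition lives in $D(A_m)$ so that $Af_0+T_\lambda\varphi$ is meaningful. There is no analytic difficulty, and in particular one never needs $\R\cap\rho(L)\ne\emptyset$: the imaginary part of $\lambda$ produces the same ``diagonal'' correction on both sides, and cancels by design of the standing assumption.
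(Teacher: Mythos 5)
Your proof is correct. It follows the same basic skeleton as the paper's: decompose $f$ and $g$ via \autoref{prop:rechenregeln}(i), use \autoref{prop:rechenregeln}(ii) to turn the cross terms into $\langle Af_0,\psi\rangle$ and $\langle\varphi,Ag_0\rangle$, and invoke the standing assumption $T_{\overline\lambda}\subset T_\lambda^*$. The one genuine difference is in the choice of decomposition for $g$: the paper writes $g=(\Id-G_{\overline\lambda}B)g+G_{\overline\lambda}Bg$, pairing $f$'s $\Glambda$-decomposition against $g$'s $G_{\overline\lambda}$-decomposition. With that asymmetric choice the two boundary contributions appear as $\langle T_\lambda Bf,Bg\rangle$ and $\langle Bf,T_{\overline\lambda}Bg\rangle$ and cancel outright by $T_{\overline\lambda}\subset T_\lambda^*$, with no residual term; in particular \autoref{prop:rechenregeln}(iii) is never needed. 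Your symmetric use of $\Glambda$ for both $f$ and $g$ instead leaves the diagonal defect $(\overline\lambda-\lambda)\langle\Glambda\varphi,\Glambda\psi\rangle$ on each side, which you must reconcile through the resolvent-type identity $T_\lambda-T_{\overline\lambda}=(\overline\lambda-\lambda)\Glambda^*\Glambda$ from \autoref{prop:rechenregeln}(iii) together with the identification $\Glambda^*=AR(\overline\lambda,L)$ (valid because $AR(\overline\lambda,L)$ is bounded and everywhere defined, hence closed). So your route costs one extra lemma and a little more bookkeeping, but it has the small conceptual payoff of making explicit that the non-self-adjoint defect of $T_\lambda$ for non-real $\lambda$ is exactly the term generated by $\mathrm{Im}\,\lambda$ on the left-hand side. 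All the individual steps (membership $\varphi,\psi\in D(T_\lambda)$, the identity $A_mf=Af_0+T_\lambda\varphi$, the conjugation bookkeeping) check out.
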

\begin{proof}
	Let $\lambda \in \rho(L)$ and $f, g \in D(L_m) \cap D(A_m)$. Note that $\overline{\lambda} \in \rho(L)$. 
	Using $D(L_m) = D(L) \oplus \ker(\lambda-L_m)$, $\rg(\Glambda) \subseteq \ker(\lambda-L_m)$ and \autoref{prop:rechenregeln}~(ii) we obtain
	\begin{align}
	\langle (L_m -\lambda) f , g \rangle_{\H} \notag
	&= \langle (L_m-\lambda) (\Id - \Glambda B)f, (\Id - G_{\overline{\lambda}} B)g + G_{\overline{\lambda}} B g \rangle_{\H} \\
	&= \langle (L-\lambda) (\Id - \Glambda B)f, (\Id - G_{\overline{\lambda}} B)g \rangle_{\H}
	+ \langle (L-\lambda) (\Id - \Glambda B)f, G_{\overline{\lambda}} Bg \rangle_{\H} \notag \\
	&= \langle (L-\lambda) (\Id - \Glambda B)f, (\Id - G_{\overline{\lambda}} B)g \rangle_{\H}
	+ \langle (G_{\overline{\lambda}})^* (L-\lambda) (\Id - \Glambda B)f, Bg \rangle_{\dH} \notag \\
	&= \langle (L-\lambda) (\Id - \Glambda B)f, (\Id - G_{\overline{\lambda}} B)g \rangle_{\H}
	- \langle A (\Id - \Glambda B)f, Bg \rangle_{\dH} \notag \\		 	
	&= \langle (L-\lambda) (\Id - \Glambda B)f, (\Id - G_{\overline{\lambda}} B)g \rangle_{\H}
	- \langle A_m f, Bg \rangle_{\dH}
	+ \langle T_\lambda Bf, Bg \rangle_{\dH} .
	\end{align}
	By an analogous calculation we obtain
	\begin{align}
	\langle f , (L_m- \overline{\lambda}) g \rangle_{\H}
	= \langle (\Id - \Glambda B)f, (L-\overline{\lambda})(\Id - G_{\overline{\lambda}} B)g \rangle_{\H}
	- \langle B f, A_m g \rangle_{\dH}
	+ \langle Bf, T_{\overline{\lambda}} Bg \rangle_{\dH} .
	\end{align}
	By the symmetry of $L$ and $T_{\overline{\lambda}} \subset T_\lambda^\ast$, taking the difference of these two equations proves the claim. 
\end{proof}

With this result we can easily determine when $\HIBCab$ is symmetric. Conditions of this type were also given in~\cite[eq.~(8)-(10)]{tumulka2020}. 
The necessity of these conditions will be further addressed in the more general framework of~\hyperref[sect:ext]{Section \ref*{sect:ext}}.

\begin{lem}\label{lem:symmetry}
	The operators $\HIBCab$ are symmetric on $\H$ if
	\begin{equation*}
	\bar{\alpha} \gamma, \bar{\beta} \delta \in \R \qquad \text{ and } \qquad  \beta \bar{\gamma} - \bar{\alpha} \delta = 1. 
	\end{equation*} 
\end{lem}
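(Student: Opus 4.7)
The approach I would take is a direct computation of $\langle \HIBCab f, g\rangle_\H - \langle f, \HIBCab g\rangle_\H$ for $f,g\in D(\HIBCab)$, using the abstract Green identity from~\autoref{lem:integration by parts} to handle the $L_m$ part and the boundary condition $I^\ast f=\alpha A_m f+\beta Bf$ (together with its counterpart for $g$) to handle the perturbations $\gamma IA_m+\delta IB$. The whole thing should reduce to checking that the coefficients of the four quadratic forms $\langle A_m f,A_m g\rangle$, $\langle A_m f,Bg\rangle$, $\langle Bf,A_m g\rangle$ and $\langle Bf,Bg\rangle$ each vanish.

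Concretely, I would first rewrite $\bar\gamma\langle IA_m f,g\rangle_\H=\bar\gamma\langle A_m f,I^\ast g\rangle_{\dH}$ and substitute $I^\ast g=\alpha A_m g+\beta Bg$, and similarly for $\bar\delta\langle IBf,g\rangle_\H$. Performing the mirror manipulation on $\langle f,\HIBCab g\rangle$ using $I^\ast f=\alpha A_m f+\beta Bf$ and taking the difference, the result is
\begin{align*}
\langle \HIBCab f, g\rangle_\H - \langle f, \HIBCab g\rangle_\H
&= \langle L_m f, g\rangle_\H - \langle f, L_m g\rangle_\H\\
&\quad + (\bar\gamma\alpha-\gamma\bar\alpha)\langle A_m f, A_m g\rangle_{\dH}
+ (\bar\delta\beta-\delta\bar\beta)\langle Bf, Bg\rangle_{\dH}\\
&\quad + (\bar\gamma\beta-\delta\bar\alpha)\langle A_m f, Bg\rangle_{\dH}
+ (\bar\delta\alpha-\gamma\bar\beta)\langle Bf, A_m g\rangle_{\dH}.
\end{align*}
Applying the Green identity to replace $\langle L_m f,g\rangle_\H-\langle f,L_m g\rangle_\H$ by $\langle Bf,A_m g\rangle_{\dH}-\langle A_m f,Bg\rangle_{\dH}$ then adjusts the middle two coefficients by $\pm 1$.

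The proof is concluded by observing that the four hypotheses translate exactly into the vanishing of the four resulting coefficients: $\bar\alpha\gamma\in\R$ is equivalent to $\bar\gamma\alpha-\gamma\bar\alpha=0$, $\bar\beta\delta\in\R$ is equivalent to $\bar\delta\beta-\delta\bar\beta=0$, and $\beta\bar\gamma-\bar\alpha\delta=1$ gives $\bar\gamma\beta-\delta\bar\alpha-1=0$ directly, as well as its complex conjugate $\gamma\bar\beta-\bar\delta\alpha-1=0$, which is the coefficient of $\langle Bf,A_m g\rangle_{\dH}$ after the Green identity is applied.

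I do not expect any real obstacle: the computation is purely algebraic once the boundary condition and Green identity are in hand, and the inclusions $D(\HIBCab)\subset D(L_m)\cap D(A_m)$ ensure that~\autoref{lem:integration by parts} applies. The only mild subtlety is tracking complex conjugation carefully (linear in the second argument, antilinear in the first) so that the coefficients come out in the form matching the hypotheses; otherwise the statement follows by inspection.
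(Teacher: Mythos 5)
Your proposal is correct and takes essentially the same route as the paper's proof: apply the abstract Green identity of \autoref{lem:integration by parts}, substitute the interior-boundary condition into the terms involving $I$, and collect the coefficients of the four quadratic forms $\langle A_m f,A_m g\rangle$, $\langle A_m f,Bg\rangle$, $\langle Bf,A_m g\rangle$, $\langle Bf,Bg\rangle$, which vanish precisely under the stated conditions. The coefficients you compute agree with the paper's (up to the opposite convention for which argument of the inner product is antilinear, which does not affect the resulting conditions).
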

\begin{proof}
	From \autoref{lem:integration by parts} we conclude
	\begin{align}\label{eq:1 lem:symmetry}
		\langle \HIBCab f, g \rangle_{\H} - \langle f , \HIBCab g \rangle_{\H}
		=& \langle B f, A_m g \rangle_{\dH} 
		- \langle A_m f , B g \rangle_{\dH} \\
		&+ \gamma \langle I A_m f , g \rangle_{\H} 
		+ \delta \langle I B f , g \rangle_{\H} \notag \\
		&- \bar{\gamma} \langle f , I A_m g \rangle_{\H} 
		- \bar{\delta} \langle f , I B g \rangle_{\H}  \notag 
	\end{align}
	for $f, g \in D(\HIBCab)$. The IBC $\alpha A_m f+\beta Bf= I^*f$ now implies
	\begin{align}\label{eq:2 lem:symmetry}
		\gamma& \langle I A_m f , g \rangle_{\H} 
	+ \delta \langle I B f , g \rangle_{\H} 
	- \bar{\gamma} \langle f , I A_m g \rangle_{\H} 
	- \bar{\delta} \langle f , I B g \rangle_{\H} \\
	=&
	\bar{\alpha} \gamma \langle A_m f , A_m g \rangle_{\dH} 
	+ \bar{\beta} \gamma \langle A_m f , Bg \rangle_{\dH} 
	+ \bar{\alpha} \delta \langle Bf , A_m g \rangle_{\dH} 
	+ \bar{\beta} \delta \langle Bf , Bf \rangle_{\dH} \notag \\
	&- \alpha \bar{\gamma} \langle A_m f , A_m g \rangle_{\dH}
	- \beta \bar{\gamma} \langle Bf , A_m g \rangle_{\dH}
	- \alpha \bar{\delta} \langle A_m f , Bg \rangle_{\dH}
	- \beta \bar{\delta} \langle Bf , Bf \rangle_{\dH} \notag \\
	=& (\bar{\alpha}\gamma - \alpha \bar{\gamma}) \langle A_m f , A_m g \rangle_{\dH}
	+ (\bar{\beta}\delta - \beta \bar{\delta}) \langle B f , B g \rangle_{\dH} \notag \\
	&+\left(\bar{\beta} \gamma - \alpha \bar{\delta} \right) 
	\langle A_m f, B g \rangle_{\dH} 
	- \left(\beta \bar{\gamma} - \bar{\alpha} \delta \right) 
	\langle B f, A_m g \rangle_{\dH} \notag 
	\end{align}
	for $f, g \in D(\HIBCab)$.
	Combining \eqref{eq:1 lem:symmetry} and \eqref{eq:2 lem:symmetry} yields
	\begin{align}
		\langle \HIBCab f, g \rangle_{\dH} - \langle f , \HIBCab g \rangle_{\dH}
		&= (\bar{\alpha}\gamma - \alpha \bar{\gamma}) \langle A_m f , A_m g \rangle_{\dH}
		+ (\bar{\beta}\delta - \beta \bar{\delta}) \langle B f , B g \rangle_{\dH}
		\notag \\
		&+\left(\bar{\beta} \gamma - \alpha \bar{\delta} -1 \right) 
		\langle A_m f, B g \rangle_{\dH} 
		- \left(\beta \bar{\gamma} - \bar{\alpha} \delta -1 \right)
		\langle B f, A_m g \rangle_{\dH},
	\end{align}
	so clearly $\HIBCab$ is symmetric under the given conditions.
\end{proof}

\begin{rem}\label{rem:boundary triple}
The results of this section show that $(\dH,B,A_m)$ is a \emph{quasi boundary triple} for the restriction $(L_m)|_{D(A_m)}$ (see~\autoref{def:qbt}).

	In this context, the identity \eqref{eq:integration by parts} is called the \emph{abstract Green identity}. 
	By \autoref{rem:1}, $A = (A_m)|_{\ker(B)}$ has dense range and $B$ is surjective. This implies that $(A_m,B) \colon D(A_m) \cap D(L_m) \to \dH \times \dH$ has dense range. Finally, $L = (L_m)|_{\ker(B)}$ is a self-adjoint operator on $\H$, by hypothesis. 
\end{rem}

\subsection{Self-adjointness}

In the framework of quasi boundary triples, the symmetric/self-adjoint boundary conditions for $L_m$ have been studied extensively~\cite{derkach1991, derkach1995, BeLa2007, BeMi14, posi08}. 
 	In particular, this applies to the Robin-type conditions $\alpha A f+ \beta Bf =0$, that correspond to the choice $I=0$ for $\HIBCab$.

In this section we study the self-adjointness of $\HIBCab$ in relation to these Robin-type operators and provide formulas for its resolvent.
Throughout, we assume that the parameters $\alpha, \beta, \gamma, \delta$ satisfy the symmetry condition of~\autoref{lem:symmetry}
\begin{equation}
 \bar{\alpha} \gamma,\, \bar{\beta} \delta \in \R, \text{ and } \beta \bar{\gamma} - \bar{\alpha} \delta = 1.
\end{equation}

We begin by introducing the usual Robin-type operators.
\begin{defn}
	For $\alpha, \beta \in \C$ we denote by $L_{\alpha, \beta} \colon D(L_{\alpha,\beta}) \subset \H \to \H$ the \emph{abstract operator with Robin boundary conditions}
	\begin{equation}
	L_{\alpha,\beta} f := L_m f, \quad D(L_{\alpha,\beta}) := \{ f \in D(L_m) \cap D(A_m) \colon \alpha A_m f + \beta B f = 0 \} . 
	\end{equation} 
\end{defn}

We will study the relationship between $L_{\alpha, \beta}$ and $\HIBCab$. Expressing properties of $\HIBCab$ by those of $L_{\alpha, \beta}$ is useful, since the latter are 
better understood. The operator $L_{0,1}=L$ is usually particularly simple.

The operators $L_{\alpha, \beta}$ are symmetric if $\alpha \overline{\beta} \in \R$, which  is implied by the symmetry conditions for $\HIBCab$.

In the Moshinsky-Yafaev model (\autoref{exa:yaf}), the operators $L_{\alpha, \beta}$ correspond to the Laplacian in $\R^3$ with a $\delta$-potential at $x=0$ and coupling (scattering length) $\alpha \beta^{-1}=\alpha \bar{\beta} |\beta|^{-2}\in\R\cup \{\pm\infty\}$.
The relationship between $L_{\alpha,\beta}$ and $\HIBCab$ will be expressed using the following operators that generalise $G_\lambda$.

\begin{defn}
	Let $\bar{\lambda} \in \rho(L_{\alpha,\beta})$. We define the \emph{abstract Dirichlet-operators} associated with $\alpha,\beta$ and $\lambda$ by
	\begin{equation}
	 \Glambdaab = \Big((\gamma A_m + \delta B)R(\overline{\lambda},L_{\alpha,\beta})\Big)^*, \qquad D(\Glambdaab)=\rg(\alpha T_{\bar\lambda}^* + \beta).
	 \end{equation}
	Moreover we define the \emph{abstract Dirichlet-to-Neumann operator} associated with $\alpha,\beta$ and $\lambda$ by
	\begin{equation}
		T_\lambda^{\alpha,\beta} := (\gamma A_m+\delta B) \Glambdaab , \qquad
	D(T_\lambda^{\alpha,\beta}) = \rg(\alpha T_{\bar\lambda}^* + \beta).
	\end{equation}
\end{defn}

In order to investigate these operators, we need the following well-known resolvent formula for $L_{\alpha,\beta}$ (see e.g.~\cite{derkach1991, derkach1995, BeLa2007}).

\begin{lem}\label{lem:resLab}
 Let $(\alpha, \beta)\neq 0$ and $\lambda\in \rho(L)$.
 Then $\lambda \in \rho(L_{\alpha, \beta})$ if and only if $\alpha T_\lambda + \beta$ is one-to-one and $\rg(A)\subset \rg \left(\alpha T_\lambda + \beta\right)$. In this case the resolvent satisfies
 \begin{equation*}
  R(\lambda, L_{\alpha, \beta})=\left(1-\alpha G_\lambda \left(\alpha T_\lambda + \beta\right)^{-1}A\right)R(\lambda, L).
 \end{equation*}
 \end{lem}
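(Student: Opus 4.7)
The strategy is to exploit the decomposition $D(L_m) = D(L)\oplus \ker(\lambda-L_m)$ from \autoref{prop:rechenregeln}~(i), in which $G_\lambda$ and $B$ act as mutually inverse bijections between $\dH$ and $\ker(\lambda-L_m)$, together with the identities $BG_\lambda = \Id_\dH$, $\rg(G_\lambda)\subset\ker(\lambda-L_m)$, and $A_m G_\lambda = T_\lambda$ on the domain $D(T_\lambda) = \{\varphi\in\dH : G_\lambda\varphi \in D(A_m)\}$. The case $\alpha=0$ is trivial (then $\beta\ne 0$, $L_{0,\beta}=L$, and $\alpha T_\lambda+\beta = \beta\Id$), so I focus on $\alpha\ne 0$.

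For the sufficiency direction, assume $\alpha T_\lambda+\beta$ is injective with $\rg(A)\subset \rg(\alpha T_\lambda+\beta)$. Given $g\in\H$, the element $AR(\lambda,L)g\in\rg(A)$ has a unique preimage $\varphi := (\alpha T_\lambda+\beta)^{-1}AR(\lambda,L)g\in D(T_\lambda)$, and I set
$$f := R(\lambda,L)g - \alpha\, G_\lambda \varphi.$$
Since $R(\lambda,L)g\in D(L)\subset D(L_m)\cap D(A_m)$ and $G_\lambda\varphi\in \ker(\lambda-L_m)\cap D(A_m)$, one has $f\in D(L_m)\cap D(A_m)$ and $(\lambda-L_m)f = g$. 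A short computation, using $BR(\lambda,L)g=0$ and $A_m G_\lambda\varphi = T_\lambda\varphi$, gives
$$\alpha A_m f + \beta B f = \alpha A R(\lambda,L)g - \alpha(\alpha T_\lambda+\beta)\varphi = 0,$$
so $f\in D(L_{\alpha,\beta})$, confirming that the displayed formula is a right inverse of $\lambda-L_{\alpha,\beta}$. Injectivity of $\lambda-L_{\alpha,\beta}$ is then immediate: any $f$ in its kernel lies in $\ker(\lambda-L_m)$, hence equals $G_\lambda \varphi$ with $\varphi = Bf\in D(T_\lambda)$, and the IBC forces $(\alpha T_\lambda+\beta)\varphi=0$, so $\varphi=0$ and $f=0$.

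For the necessity direction, suppose $\lambda\in\rho(L_{\alpha,\beta})$. If $(\alpha T_\lambda+\beta)\varphi=0$ for some $\varphi\in D(T_\lambda)$, then $G_\lambda\varphi\in D(L_{\alpha,\beta})$ lies in the kernel of $\lambda-L_{\alpha,\beta}$, so it must vanish and hence $\varphi = B G_\lambda\varphi = 0$, proving injectivity. For the range condition, pick $g\in\H$, set $u := R(\lambda,L_{\alpha,\beta})g$, and decompose $u = R(\lambda,L)g + G_\lambda\varphi$ with $\varphi = Bu$; since $u\in D(A_m)$ and $R(\lambda,L)g\in D(A)$, one obtains $\varphi\in D(T_\lambda)$, and the IBC rearranges to $(\alpha T_\lambda+\beta)\varphi = -\alpha A R(\lambda,L)g$, showing $A R(\lambda,L)g\in\rg(\alpha T_\lambda+\beta)$. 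Because $R(\lambda,L)$ maps $\H$ onto $D(L)$, letting $g$ range over $\H$ exhausts all of $\rg(A)$.

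The algebra is essentially automatic; the only real technical point is the bookkeeping of domains, in particular the built-in equivalence $\varphi\in D(T_\lambda)\Leftrightarrow G_\lambda\varphi\in D(A_m)$ that keeps all compositions well defined. Boundedness of the resolvent, once bijectivity of $\lambda-L_{\alpha,\beta}$ is established, is a standard application of the closed graph theorem, using that $L_{\alpha,\beta}$ is a closed restriction of $L_m$.
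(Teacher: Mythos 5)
Your proof is correct and follows essentially the same route as the paper's: decompose $f=f_0+G_\lambda\varphi$ via \autoref{prop:rechenregeln}~(i), translate $(\lambda-L_{\alpha,\beta})f=g$ into the system $(\lambda-L)f_0=g$, $\alpha Af_0+(\alpha T_\lambda+\beta)\varphi=0$, and read off injectivity, the range condition, and the resolvent formula; you merely spell out the two directions of the equivalence more explicitly. The only soft spot is the closing aside that boundedness follows from the closed graph theorem ``since $L_{\alpha,\beta}$ is a closed restriction of $L_m$'' --- closedness of $L_{\alpha,\beta}$ is not among the stated hypotheses --- but the paper's own proof does not address boundedness either, so this is not a deviation from its argument.
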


\begin{proof}
 Since $L$ is self-adjoint, we have $\lambda, \bar\lambda\in \rho(L)$  and we can write $f=f_0+G_\lambda\varphi$ with $f_0\in D(L)$. The equation $(\lambda - L_{\alpha, \beta})f=g$ then takes the form 
 \begin{equation}\label{eq:sysLab}
 \begin{aligned}
  (\lambda - L_m)f=(\lambda - L)f_0\stackrel{!}{=}&g \\
  \left(\alpha A + \beta B\right) f= \alpha Af_0 + (\alpha T_\lambda + \beta)\varphi\stackrel{!}{=}&0.
 \end{aligned}
 \end{equation}
A solution $\varphi$ to the second equation is clearly unique if and only if $\ker(\alpha T_\lambda + \beta)= \{0\}$, so $(\alpha T_\lambda + \beta)$ must be one-to-one.

Solving the first equation for $f_0=R(\lambda, L)g$, we see that $f_0$ can be any element of $D(L)$, depending on $g$. Hence the solution to the system~\eqref{eq:sysLab} exists for every $g\in \H$ exactly if $\rg(A)\subset \rg \left(\alpha T_\lambda + \beta\right)$.
Under these hypothesis and using the algebraic inverse 
\begin{equation}
 \left(\alpha T_\lambda + \beta\right)^{-1}:\rg(\alpha T_\lambda + \beta)\to D(T_\lambda),
\end{equation}
we obtain the solution to~\eqref{eq:sysLab} as
\begin{equation}
 f_0=R(\lambda, L)g, \qquad \varphi=-\alpha \left(\alpha T_\lambda + \beta\right)^{-1} Af_0,
\end{equation}
which gives the resolvent formula. 
\end{proof}

Similar to \autoref{prop:rechenregeln} we now obtain the following statements. 

\begin{prop}\label{prop:rechenregeln ab}
 For $\lambda\in \rho(L_{\alpha, \beta})\cap\rho(L)$:
 \begin{enumerate}[(i)]
  \item $G_{\bar\lambda}^{\alpha, \beta}$ is densely defined and bounded;
  \item
  The operator $\alpha T_{\lambda}^* + \beta$ has a densely defined inverse 
  \begin{equation*}
   \left(\alpha T_{\lambda}^* + \beta\right)^{-1}:\rg(\alpha T_{\lambda}^* + \beta) \to \dH
  \end{equation*}
and we have
  \begin{equation*}
   G_{\bar\lambda}^{\alpha, \beta}=G_{\bar\lambda} \left(\alpha T_{\lambda}^* + \beta\right)^{-1};
  \end{equation*}
  \item The image satisfies $\rg(G_{\bar\lambda}^{\alpha, \beta}) \subset \ker(\bar\lambda - L_m)\cap D(A_m)$;
\item The following identity holds 
		\begin{equation*}
			(\alpha A_m + \beta B) G_{\bar\lambda}^{\alpha, \beta} = \Id_{D(G_{\bar\lambda}^{\alpha, \beta})}.
		\end{equation*}
\item The operator $T_{\bar\lambda}^{\alpha, \beta}$ is densely defined and given by the formula 
\begin{equation*}
 T_{\bar\lambda}^{\alpha, \beta}=(\gamma T_{\bar\lambda} + \delta)(\alpha T_{\lambda}^* + \beta)^{-1}.
\end{equation*}
 \end{enumerate}
\end{prop}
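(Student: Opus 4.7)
The strategy is to derive an explicit formula for $G^{\alpha,\beta}_{\bar\lambda}$ by computing $(\gamma A_m + \delta B)R(\lambda, L_{\alpha,\beta})$ via the resolvent formula of \autoref{lem:resLab} and then taking adjoints; all five claims will then follow from this identity together with the symmetry hypothesis on $\alpha,\beta,\gamma,\delta$.

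For $g \in \H$, \autoref{lem:resLab} gives $R(\lambda, L_{\alpha,\beta})g = R(\lambda, L)g + G_\lambda \varphi$ with $\varphi = -\alpha(\alpha T_\lambda + \beta)^{-1} A R(\lambda, L)g$. Applying $B$ (using $B G_\lambda = \Id_\dH$) and $A_m$ (using $T_\lambda = A_m G_\lambda$ and $A_m|_{D(L)} = A$) term-by-term and simplifying yields
\begin{equation*}
(\gamma A_m + \delta B)R(\lambda, L_{\alpha,\beta}) = c\,(\alpha T_\lambda + \beta)^{-1} A R(\lambda, L),
\end{equation*}
where $c := \gamma\beta - \alpha\delta$. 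Taking adjoints with $(AR(\lambda,L))^* = G_{\bar\lambda}$ and $((\alpha T_\lambda + \beta)^{-1})^* = (\bar\alpha T_\lambda^* + \bar\beta)^{-1}$ produces
\begin{equation*}
G^{\alpha,\beta}_{\bar\lambda} = \bar c\, G_{\bar\lambda}(\bar\alpha T_\lambda^* + \bar\beta)^{-1}.
\end{equation*}
The symmetry hypothesis now enters through a short algebraic identity: from $\alpha\bar\gamma = \bar\alpha\gamma$, $\beta\bar\delta = \bar\beta\delta$ (the reality of $\bar\alpha\gamma$ and $\bar\beta\delta$) and $\beta\bar\gamma - \bar\alpha\delta = 1$ one verifies $\bar c\,\alpha = \bar\alpha$ and $\bar c\,\beta = \bar\beta$, hence $\bar c(\alpha T_\lambda^* + \beta) = \bar\alpha T_\lambda^* + \bar\beta$ on $D(T_\lambda^*)$. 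Substituting gives claim~(ii), $G^{\alpha,\beta}_{\bar\lambda} = G_{\bar\lambda}(\alpha T_\lambda^* + \beta)^{-1}$, and simultaneously shows that $(\alpha T_\lambda^* + \beta)$ is injective with inverse defined on $\rg(\alpha T_\lambda^* + \beta) \supset \rg(A)$, which is dense by \autoref{rem:1}.

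Given~(ii), the remaining claims are quick. For~(i): on $D(L_{\alpha,\beta})$ the boundary condition $\alpha A_m + \beta B = 0$ gives $(\gamma A_m + \delta B)|_{D(L_{\alpha,\beta})} = -\tfrac{c}{\alpha}B|_{D(L_{\alpha,\beta})}$ (when $\alpha\neq 0$; handled analogously if $\alpha=0$), and boundedness of $BR(\lambda, L_{\alpha,\beta}) = -\alpha(\alpha T_\lambda + \beta)^{-1}AR(\lambda,L)$ is inherited from that of $R(\lambda,L_{\alpha,\beta})$ via injectivity of $G_\lambda$; hence $(\gamma A_m + \delta B)R(\lambda,L_{\alpha,\beta})$ is bounded on $\H$ and its adjoint $G^{\alpha,\beta}_{\bar\lambda}$ is bounded. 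For~(iii): writing $\chi := (\alpha T_\lambda^* + \beta)^{-1}\psi \in D(T_\lambda^*)$ and using \autoref{prop:rechenregeln}(iii) together with $T_{\bar\lambda}\subset T_\lambda^*$, one shows $\chi \in D(T_{\bar\lambda})$, so $G^{\alpha,\beta}_{\bar\lambda}\psi = G_{\bar\lambda}\chi \in \ker(\bar\lambda - L_m)\cap D(A_m)$ by \autoref{ass:general}(c). Claims~(iv) and~(v) then drop out of $BG_{\bar\lambda}\chi = \chi$, $A_m G_{\bar\lambda}\chi = T_{\bar\lambda}\chi$, and $T_{\bar\lambda}\subset T_\lambda^*$: namely $(\alpha A_m + \beta B)G^{\alpha,\beta}_{\bar\lambda}\psi = (\alpha T_\lambda^*+\beta)\chi = \psi$, and likewise $T^{\alpha,\beta}_{\bar\lambda}\psi = (\gamma T_{\bar\lambda} + \delta)\chi = (\gamma T_{\bar\lambda}+\delta)(\alpha T_\lambda^*+\beta)^{-1}\psi$.

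The decisive technical step is the scalar identity $\bar c\alpha = \bar\alpha$, $\bar c\beta = \bar\beta$: this is precisely where the symmetry conditions are indispensable, since without them~(ii) would acquire a nontrivial scalar factor. A secondary subtlety lies in~(iii), where one must verify that $\chi = (\alpha T_\lambda^*+\beta)^{-1}\psi$ sits in the a priori smaller domain $D(T_{\bar\lambda})$ rather than only in $D(T_\lambda^*)$, so that $A_m$ is actually applicable to $G_{\bar\lambda}\chi$.
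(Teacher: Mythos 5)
Your overall route is the same as the paper's: both proofs hinge on applying $(\gamma A_m+\delta B)$ to the resolvent formula of \autoref{lem:resLab}, simplifying with the symmetry conditions, and taking adjoints to obtain $G^{\alpha,\beta}_{\bar\lambda}=G_{\bar\lambda}(\alpha T_\lambda^*+\beta)^{-1}$, from which (iii)--(v) follow exactly as you describe. The only cosmetic difference is that the paper first rewrites $L_{\alpha,\beta}=L_{\bar\alpha,\bar\beta}$ so that the scalar prefactor is $1$ before taking adjoints, whereas you carry $c=\gamma\beta-\alpha\delta$ through and absorb it afterwards via $\bar c\alpha=\bar\alpha$, $\bar c\beta=\bar\beta$; these are equivalent uses of the symmetry conditions (modulo the degenerate cases $\alpha=0$ or $\gamma=0$, which you acknowledge only in passing).

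There is, however, a genuine flaw in your argument for (i). You claim that boundedness of $BR(\lambda,L_{\alpha,\beta})=-\alpha(\alpha T_\lambda+\beta)^{-1}AR(\lambda,L)$ is ``inherited from that of $R(\lambda,L_{\alpha,\beta})$ via injectivity of $G_\lambda$''. This does not work: from $G_\lambda\bigl(BR(\lambda,L_{\alpha,\beta})\bigr)=R(\lambda,L_{\alpha,\beta})-R(\lambda,L)$ and injectivity of the bounded operator $G_\lambda$ one cannot conclude that the right-hand factor is bounded, because $G_\lambda$ need not be bounded below. The paper's argument runs in the opposite direction: $(\gamma A_m+\delta B)R(\lambda,L_{\alpha,\beta})$ is everywhere defined on $\H$ (your own formula already shows this, since $\rg(A)\subset\rg(\alpha T_\lambda+\beta)$ by \autoref{lem:resLab}), and once its adjoint $G^{\alpha,\beta}_{\bar\lambda}$ is known from (ii) to be densely defined, the operator is closable, hence closed and everywhere defined, hence bounded by the closed graph theorem, and so is its adjoint. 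You should replace your argument for (i) by this one. A smaller caveat: your resolution of the subtlety you correctly identify in (iii) --- that $\chi=(\alpha T_\lambda^*+\beta)^{-1}\psi$ must lie in $D(T_{\bar\lambda})$ and not merely in $D(T_\lambda^*)$ so that $A_m$ applies to $G_{\bar\lambda}\chi$ --- by appeal to \autoref{prop:rechenregeln}(iii) and $T_{\bar\lambda}\subset T_\lambda^*$ does not actually close that gap, since these facts only yield $D(T_{\bar\lambda})\subset D(T_\lambda^*)$ and the mutual equality of the domains $D(T_\mu)$; the paper's own proof of (iii) passes over this point equally quickly.
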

\begin{proof}
 \begin{enumerate}[(i)]
  \item By definition, $G_{\bar\lambda}^{\alpha, \beta}$ is (a restriction of) the adjoint of an everywhere-defined operator. 
  It is thus sufficient to prove that $G_{\bar\lambda}^{\alpha, \beta}$ is densely defined, because this implies that it is the adjoint of a closable operator, and this is closed and bounded since it is everywhere defined. The claim will thus follow from (ii).
  \item First note that for $\alpha\bar\beta\in \R$ (which follows from the assumed relations of $\alpha, \beta, \gamma, \delta$), we have $L_{\alpha, \beta}=L_{\bar\alpha, \bar \beta}$.
  With the resolvent formula of~\autoref{lem:resLab} we thus have
  \begin{align}
   (\gamma A_m + \delta B) R(\lambda, L_{\alpha, \beta})=&(\gamma A_m + \delta B)\left(1-\bar\alpha G_\lambda \left(\bar\alpha T_\lambda + \bar\beta\right)^{-1}A\right)R(\lambda, L) \notag\\
   =&\left(\gamma A - \bar\alpha (\gamma T_\lambda + \delta) \left(\bar\alpha T_\lambda + \bar\beta\right)^{-1}A\right)R(\lambda, L).\label{eq:Gab form1}
  \end{align}
Now on $\rg(A)$
\begin{equation}
 \bar\alpha (\gamma T_\lambda + \delta) \left(\bar\alpha T_\lambda + \bar\beta\right)^{-1}
 = \gamma \Id_{\rg(A)} + \underbrace{(\bar \alpha \delta- \gamma \bar \beta)}_{=-1}\left(\bar\alpha T_\lambda + \bar\beta\right)^{-1},  
\end{equation}
so~\eqref{eq:Gab form1} simplifies to
\begin{equation}
 (\gamma A_m + \delta B) R(\lambda, L_{\alpha, \beta})=\left(\bar\alpha T_\lambda + \bar\beta\right)^{-1} A R(\lambda, L).
 \label{eq:3.20}
\end{equation}
This shows that, for all $\varphi$ in the domain of the adjoint of $\left(\bar\alpha T_\lambda + \bar\beta\right)^{-1}$ -- which is well defined since $\rg(A)\subset D(\left(\bar\alpha T_\lambda + \bar\beta\right)^{-1})$ is dense by~\autoref{rem:1}, we have
\begin{equation}
  \left((\gamma A_m + \delta B) R(\lambda, L_{\alpha, \beta})\right)^*\varphi=G_{\bar\lambda} \left(\left(\bar\alpha T_{\lambda} + \bar\beta\right)^{-1}\right)^*\varphi
\end{equation}
We now need to show that $D(G_{\bar\lambda}^{\alpha, \beta})=\rg(\alpha T_\lambda^*+\beta)$ is contained in the domain of this adjoint and dense.
Density is an immediate consequence of~\autoref{lem:resLab}, since 
\begin{equation}
 \rg(\alpha T_\lambda^*+\beta)^\perp = \ker(\bar \alpha T_\lambda + \bar \beta)=\{0\}.
\end{equation}
For all $\psi \in D(T_\lambda^*)$, $\varphi \in \rg(\bar \alpha T_\lambda + \bar \beta)$ we have
\begin{equation}
 \langle (\alpha T_\lambda^*+\beta)\psi, \left(\bar\alpha T_\lambda + \bar\beta\right)^{-1} \varphi \rangle_{\dH}
 = \langle \psi, \varphi \rangle_{\dH},
\end{equation}
so we clearly have
\begin{equation}
 \left(\left(\bar\alpha T_\lambda + \bar\beta\right)^{-1}\right)^*(\alpha T_\lambda^*+\beta)=\Id_{D(T_\lambda^*)}.
\end{equation}
This completes the proof of (ii) and thereby also (i).
\item The fact that $\rg(G_{\bar\lambda}^{\alpha, \beta}) \subset \ker(\bar\lambda - L_m)$ is immediate from (ii).
Since the range of $(\alpha T_\lambda^*+\beta)^{-1}$ is contained in $D(T_\lambda^*)$, we also have $\rg(G_{\bar\lambda}^{\alpha, \beta}) \subset  D(A_m)$.
\item Again using (ii) we find 
\begin{equation*}
 (\alpha A_m +  \beta B) G_{\bar\lambda}^{\alpha, \beta}
 = (\alpha A_m +  \beta B)G_{\bar\lambda} \left(\alpha T_{\lambda}^* + \beta\right)^{-1}
 = (\alpha T_{\bar\lambda} + \beta)\left(\alpha T_{\lambda}^* + \beta\right)^{-1}.
\end{equation*}
Since $T_{\bar\lambda} \subset T_\lambda^*$ this proves the claim. 
\item This follows immediately from (i) and (ii).
\qedhere
 \end{enumerate}

\end{proof}

We can now go back to investigating the operator $\HIBCab$.
The following lemma provides a parametrisation of $D(\HIBCab)$ in terms of $D(L_{\alpha,\beta})$, under the condition that $\Id-G^{\alpha, \beta}_\lambda I^*$ is invertible for some $\lambda \in \rho(L_{\alpha,\beta})$.
This is certainly satisfied if $L_{\alpha,\beta}$ is self-adjoint, $A_m$ is infinitesimally $L_{\alpha,\beta}$-bounded and the imaginary part of $\lambda$ is large enough. It is also usually satisfied  if there is a hierarchical structure of the form that we have in applications to quantum field theory, see~\autoref{rem:hierarchy}. 
In the Moshinsky-Yafaev model (\autoref{exa:yaf}), this is particularly obvious, since there $(G^{\alpha, \beta}_\lambda I^*)^2=0$, so the inverse is simply given by $(\Id- G_\lambda^{\alpha, \beta} I^*)^{-1}=\Id+G_\lambda^{\alpha, \beta} I^*$.

This parametrisation for the case $\alpha=0$ appears already in the works~\cite{LaSch19,La19,IBCrelat, IBCmassless}, where it plays an important role.

\begin{lem}\label{lem:domains} 
Assume that $\bar \lambda\in \rho(L)\cap \rho(L_{\alpha, \beta})$.
	If $1 \in \rho(G^{\alpha,\beta}_\lambda I^*)$, we denote $\Gamma^{\alpha, \beta}_\lambda:=R(1, \Glambdaab I^*)$ and
	the equality
	\begin{equation}
	D(\HIBCab) = (\Id - G^{\alpha,\beta}_\lambda I^*)^{-1} D(L_{\alpha,\beta})= \Gamma^{\alpha, \beta}_\lambda D(L_{\alpha,\beta}).
	\end{equation}
	holds.
\end{lem}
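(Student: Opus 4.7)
I would prove the stated equality by checking both inclusions separately, relying on two properties of $G^{\alpha,\beta}_\lambda$ established in \autoref{prop:rechenregeln ab}: part (iii) gives $\rg(G^{\alpha,\beta}_\lambda) \subset \ker(\lambda - L_m) \cap D(A_m) \subset D(L_m) \cap D(A_m)$, and part (iv), with the substitution $\mu := \bar\lambda \in \rho(L) \cap \rho(L_{\alpha,\beta})$, reads $(\alpha A_m + \beta B) G^{\alpha,\beta}_\lambda = \Id$ on $D(G^{\alpha,\beta}_\lambda)$. In words, $G^{\alpha,\beta}_\lambda$ is a lift of boundary data into $D(L_m) \cap D(A_m)$ inverting the boundary map $\alpha A_m + \beta B$, so subtracting $G^{\alpha,\beta}_\lambda I^* f$ exchanges the inhomogeneous IBC $\alpha A_m f + \beta B f = I^* f$ with the homogeneous Robin condition $\alpha A_m f + \beta B f = 0$ defining $L_{\alpha,\beta}$.

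\textbf{First inclusion.} Given $g \in D(L_{\alpha,\beta})$ and $f := \Gamma^{\alpha,\beta}_\lambda g$, the defining identity $(\Id - G^{\alpha,\beta}_\lambda I^*) f = g$ rewrites as $f = g + G^{\alpha,\beta}_\lambda I^* f$. Both summands lie in $D(L_m) \cap D(A_m)$ (the first by $g \in D(L_{\alpha,\beta})$, the second by \autoref{prop:rechenregeln ab}(iii)), so $f$ is as well. Applying $\alpha A_m + \beta B$ and using the Robin condition satisfied by $g$ together with \autoref{prop:rechenregeln ab}(iv) yields $(\alpha A_m + \beta B) f = 0 + I^* f = I^* f$, so $f \in D(\HIBCab)$.

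\textbf{Second inclusion.} Conversely, given $f \in D(\HIBCab)$, set $g := (\Id - G^{\alpha,\beta}_\lambda I^*) f$. The same regularity and \autoref{prop:rechenregeln ab}(iv) show $g \in D(L_m) \cap D(A_m)$ and
\begin{equation*}
(\alpha A_m + \beta B) g = (\alpha A_m + \beta B) f - I^* f = I^* f - I^* f = 0,
\end{equation*}
so $g \in D(L_{\alpha,\beta})$ and $f = \Gamma^{\alpha,\beta}_\lambda g \in \Gamma^{\alpha,\beta}_\lambda D(L_{\alpha,\beta})$, using the invertibility hypothesis $1 \in \rho(G^{\alpha,\beta}_\lambda I^*)$.

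\textbf{Main obstacle.} The genuinely non-routine step is to make sense of the expression $G^{\alpha,\beta}_\lambda I^* f$ in each direction, since $G^{\alpha,\beta}_\lambda$ is only densely defined on $\dH$ with domain $\rg(\alpha T_{\bar\lambda}^* + \beta)$. For $f \in D(\HIBCab)$ one verifies $I^* f \in D(G^{\alpha,\beta}_\lambda)$ directly: the decomposition $f = f_0 + G_\lambda B f$ from \autoref{prop:rechenregeln}(i) together with \autoref{construction} gives $A_m f = A f_0 + T_\lambda B f$, so the IBC reads $I^* f = \alpha A f_0 + (\alpha T_\lambda + \beta) B f$. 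The second summand lies in $\rg(\alpha T_\lambda + \beta) \subset \rg(\alpha T_{\bar\lambda}^* + \beta)$ by the symmetry assumption $T_\lambda \subset T_{\bar\lambda}^*$, while the first sits in $\rg(A)$, which is placed in $D(G^{\alpha,\beta}_\lambda)$ via \autoref{lem:resLab} (at $\bar\lambda$) combined with the same symmetry. For the first inclusion, the analogous statement for $\Gamma^{\alpha,\beta}_\lambda g$ is obtained after extending $G^{\alpha,\beta}_\lambda$ to $\dH$ by continuity, as is implicit in interpreting the resolvent hypothesis $1 \in \rho(G^{\alpha,\beta}_\lambda I^*)$.
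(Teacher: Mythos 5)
Your proposal is correct and follows essentially the same route as the paper: both directions are verified by applying $\alpha A_m+\beta B$ and invoking \autoref{prop:rechenregeln ab}~(iii) and (iv), and your ``main obstacle'' paragraph reproduces the paper's own domain check \eqref{eq:I*DIBC}, namely that the IBC together with \autoref{lem:resLab} and $T_{\mu}\subset T_{\bar\mu}^*$ places $I^*f$ in $D(G^{\alpha,\beta}_\lambda)$. The only (shared) looseness is in the converse direction, where the paper simply reads surjectivity of $\Id-G^{\alpha,\beta}_\lambda I^*$ onto $D(L_{\alpha,\beta})$ off the hypothesis $1\in\rho(G^{\alpha,\beta}_\lambda I^*)$ rather than extending $G^{\alpha,\beta}_\lambda$ by continuity, but this is a matter of interpretation of the hypothesis, not a substantive difference.
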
 
\begin{proof}
	Since both sides are subsets of $D(L_m) \cap D(A_m)$ it is sufficient to verify the boundary conditions. 
	
	\smallskip 
	
	Assume first that $f = f_0 + G_{\bar\lambda} \varphi \in D(\HIBCab)$, $f_0\in D(L)$. Using the interior-boundary condition and~\autoref{lem:resLab} we first find 
	\begin{equation}\label{eq:I*DIBC}
	 I^*f = (\alpha T_{\bar\lambda} + \beta)\varphi + Af_0 \in \rg( \alpha T_{\bar\lambda} + \beta) \subset \rg(T_\lambda^*+\beta).
	\end{equation}
We thus have $f\in D(G_\lambda^{\alpha,\beta} I^*)$ and can use~\autoref{prop:rechenregeln ab} (iv) to obtain
	\begin{equation}
	(\alpha A_m + \beta B)(\Id - G^{\alpha,\beta}_\lambda I^*) f 
	= (\alpha A_m + \beta B) f - (\alpha A_m + \beta B) G^{\alpha,\beta}_\lambda I^* f = I^* f - I^* f = 0 . 
	\end{equation}
	
	\smallskip  
	
	Conversely, we assume that $\eta \in D(L_{\alpha,\beta})$. Since $(\Id - G^{\alpha,\beta}_\lambda I^*)$ is invertible and hence surjective, there exists an $f\in D(G_\lambda^{\alpha,\beta} I^*)$ with $\eta = (\Id - G^{\alpha,\beta}_\lambda I^*) f$. Note that
	\begin{equation}
	f = \underbrace{\eta}_{\in D(A_m) \cap D(B)} - \underbrace{G^{\alpha,\beta}_\lambda I^* f}_{\in D(A_m) \cap D(B)} \in D(A_m) \cap D(B) .
	\end{equation}
	It follows from~\autoref{prop:rechenregeln ab} (iv) that
	\begin{equation}
	(\alpha A_m + \beta B) f = (\alpha A_m + \beta B) G^{\alpha,\beta}_\lambda I^* f
	= I^* f 
	\end{equation}
	and hence $f \in D(\HIBCab)$.
\end{proof}

The following lemma relates relative bounds of $A_m$ and $T_\lambda$, using the $L$-boundedness of $A$ and the decomposition of~\autoref{prop:rechenregeln}(i). Recall that by our convention 
\begin{equation}
D(I T_{\lambda}^{\alpha,\beta} I^*) = \{f \in \H: I^*f\in D( T_{\lambda}^{\alpha,\beta} )\}. 
\end{equation}

\begin{lem}\label{lem:A rel HIBC-bounded}
	Assume that $\lambda, \bar\lambda \in \rho(L_{\alpha,\beta})\cap \rho(L)$ and $1 \in \rho(\Glambdaab I^*)$.
	Further, assume that $I T_{\lambda}^{\alpha,\beta} I^*$ is relatively $(\Id - G_{\bar{\lambda}} I^*)^* (L_{\alpha,\beta}-\lambda) (\Id - \Glambdaab I^*)$-bounded of bound $a < 1$.
	
	Then
	\begin{enumerate}[(i)]
		\item 
		$I T_\lambda^{\alpha,\beta} I^*$ is relatively $\HIBCab$-bounded.
		If $a=0$, i.~e.~the bound relative to $(\Id - G_{\bar{\lambda}} I^*)^* (L_{\alpha,\beta}-\lambda) (\Id - \Glambdaab I^*)$ is infinitesimal, then the $\HIBCab$-bound is also infinitesimal.
		\item $I A_m$ is relatively $\HIBCab$-bounded. 
		If $a = 0$, then the $\HIBCab$-bound is also infinitesimal.
	\end{enumerate}
	
\end{lem}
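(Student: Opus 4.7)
The plan is to establish, for $f \in D(\HIBCab)$, the identity
\[
(\HIBCab - \lambda) f = M f + I T_\lambda^{\alpha,\beta} I^* f + I Q\varphi - I A\eta_0,
\]
where $M := (\Id - G_{\bar\lambda} I^*)^* (L_{\alpha,\beta}-\lambda)(\Id - \Glambdaab I^*)$ is the operator in the hypothesis, $\eta := (\Id - \Glambdaab I^*) f \in D(L_{\alpha,\beta})$ (via~\autoref{lem:domains}), the decomposition $\eta = \eta_0 + G_{\bar\lambda}\varphi$ with $\eta_0 \in D(L)$ and $\varphi = B\eta$ is that of~\autoref{prop:rechenregeln}~(i), and $Q$ is a bounded operator on $\dH$. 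An absorption argument based on this identity then yields both parts.

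To derive the identity, the properties $\rg(\Glambdaab) \subseteq \ker(\lambda - L_m) \cap D(A_m)$ and $A_m \Glambdaab = T_\lambda^{\alpha,\beta}$ from~\autoref{prop:rechenregeln ab} give
\[
(\HIBCab - \lambda) f = (L_{\alpha,\beta}-\lambda)\eta + I(\gamma A_m + \delta B)\eta + I T_\lambda^{\alpha,\beta} I^* f,
\]
and, in the case $\alpha \neq 0$ (the other case is analogous), the Robin condition $\alpha A_m\eta + \beta B\eta = 0$ reads $\alpha A\eta_0 + (\alpha T_{\bar\lambda} + \beta)\varphi = 0$ in the decomposition of $\eta$, which causes the potentially unbounded contributions of $\gamma T_{\bar\lambda}\varphi$ and $\gamma A\eta_0$ in $(\gamma A_m + \delta B)\eta$ to cancel. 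One finds $(\gamma A_m + \delta B)\eta = (\delta - \gamma\beta/\alpha)\varphi$. Combining this with $G_{\bar\lambda}^*(L_{\alpha,\beta}-\lambda)\eta = -A\eta_0 + (T_\lambda - T_{\bar\lambda})\varphi$, which follows from~\autoref{prop:rechenregeln}~(ii),(iii), one obtains
\[
M f = (L_{\alpha,\beta}-\lambda)\eta + I A\eta_0 + I(T_{\bar\lambda}-T_\lambda)\varphi,
\]
and subtraction yields the claimed identity with $Q := (T_\lambda - T_{\bar\lambda}) + (\delta - \gamma\beta/\alpha)\Id_\dH$, a bounded operator by~\autoref{prop:rechenregeln}~(iii).

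Combining with the hypothesis $\|I T_\lambda^{\alpha,\beta} I^* f\| \leq a\|Mf\| + b\|f\|$ and rearranging yields
\[
(1-a) \|IT_\lambda^{\alpha,\beta} I^* f\| \leq a \|(\HIBCab - \lambda)f\| + a\|I\|\bigl(\|A\eta_0\| + \|Q\|\|\varphi\|\bigr) + b\|f\|.
\]
The remainder $\|A\eta_0\| + \|Q\|\|\varphi\|$ is to be bounded by $\|L_{\alpha,\beta}\eta\| + \|\eta\|$ using~\autoref{ass:general}~(b) and the resolvent formula of~\autoref{lem:resLab} (which, on account of the Robin condition, gives $\varphi = -\alpha(\alpha T_{\bar\lambda}+\beta)^{-1} A\eta_0$, i.e.\ a bounded operator applied to $A\eta_0$); then $\|(L_{\alpha,\beta}-\lambda)\eta\|$ is in turn expressed through the identity in terms of $\|(\HIBCab -\lambda) f\|$, $\|IT_\lambda^{\alpha,\beta} I^* f\|$ and the same remainder. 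An iterative absorption completes the proof of~(i), and for $a = 0$ the resulting $\HIBCab$-bound is infinitesimal because the constant $\epsilon$ in the infinitesimal $L$-bound of $A$ can be taken arbitrarily small. For~(ii), one decomposes $IA_m f = IA_m\eta + IT_\lambda^{\alpha,\beta} I^* f$ and uses $A_m\eta = A\eta_0 + T_{\bar\lambda}\varphi$ with $T_{\bar\lambda}\varphi = -(\beta/\alpha)\varphi - A\eta_0$ (again the Robin condition) to reduce the first term to bounded remainders of the same form, while the second term is controlled by~(i). The main obstacle is the iteration: one must carefully track constants so that the absorption closes, and in particular so that the remainder truly becomes infinitesimal in the graph norm of $\HIBCab$ when $a = 0$.
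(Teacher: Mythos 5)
Your overall strategy --- rewriting $(\HIBCab-\lambda)f$ as a reference operator applied to $f$, plus the perturbation $IT_\lambda^{\alpha,\beta}I^*f$, and then absorbing --- is exactly the paper's. The difference is which reference operator you use, and this is where a genuine gap appears. The paper's proof works with $M^{\alpha,\beta}:=(\Id-G_{\bar{\lambda}}^{\alpha,\beta}I^*)^*(L_{\alpha,\beta}-\lambda)(\Id-\Glambdaab I^*)$; the plain $G_{\bar{\lambda}}$ in the statement is a typo (compare the hypothesis of \autoref{thm:HIBC self-adjoint}). Since $(G_{\bar{\lambda}}^{\alpha,\beta}I^*)^*(\lambda-L_{\alpha,\beta})=\gamma IA_m+\delta IB$ on $D(L_{\alpha,\beta})$, one obtains the \emph{exact} identity $(\HIBCab-\lambda)f=M^{\alpha,\beta}f+IT_\lambda^{\alpha,\beta}I^*f$ with no remainder, and the absorption is then immediate: $\|IT_\lambda^{\alpha,\beta}I^*f\|\le a\|M^{\alpha,\beta}f\|+b\|f\|\le a\|\HIBCab f\|+a\|IT_\lambda^{\alpha,\beta}I^*f\|+(b+a|\lambda|)\|f\|$, and $a<1$ finishes part (i).

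Your literal reading produces the extra terms $IQ\varphi-IA\eta_0$ (your computation of these is correct), but their control is not completed and, as set up, cannot be. You need $\|\varphi\|+\|A\eta_0\|\lesssim\|(L_{\alpha,\beta}-\lambda)\eta\|$ and then $\|(L_{\alpha,\beta}-\lambda)\eta\|$ controlled by $\|(\HIBCab-\lambda)f\|+\|f\|$; but your identity only gives $\|(L_{\alpha,\beta}-\lambda)\eta\|\le\|(\HIBCab-\lambda)f\|+C\|\varphi\|+\|IT_\lambda^{\alpha,\beta}I^*f\|$ with a constant $C$ that is in no way small, so the self-referential estimate does not close --- this is precisely the ``obstacle'' you flag without resolving, and iteration cannot repair a contraction factor that may exceed $1$. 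Two further concrete problems: $(\alpha T_{\bar{\lambda}}+\beta)^{-1}$ is not known to be bounded --- only its composition with $AR(\bar{\lambda},L)$ (equivalently, the operators $BR(\lambda,L_{\alpha,\beta})$ and $A_mR(\lambda,L_{\alpha,\beta})$) is, as shown via \eqref{eq:3.20} in the proof of \autoref{prop:rechenregeln ab} --- so $\|\varphi\|\lesssim\|A\eta_0\|$ is unjustified, though $\|\varphi\|\lesssim\|(L-\bar{\lambda})\eta_0\|$ would do. And in part (ii) you use $A_m\Glambdaab=T_\lambda^{\alpha,\beta}$, which is false for general $\gamma,\delta$ since $T_\lambda^{\alpha,\beta}=(\gamma A_m+\delta B)\Glambdaab$; the paper instead first bounds the combination $\gamma IA_mf+\delta IBf$ relative to $\HIBCab$ and then recovers $IA_mf$ from the interior-boundary condition $\alpha A_mf+\beta Bf=I^*f$ together with the boundedness of $II^*$.
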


\begin{proof}
\begin{enumerate}[(i)]
\item
	From the definition of $G_{\bar{\lambda}}^{\alpha,\beta}$ we obtain
	\begin{equation}\label{eq:G^ab L}
	(G_{\bar{\lambda}}^{\alpha,\beta} I^*)^*({\lambda} - L_{\alpha,\beta})
	= I (G_{\bar{\lambda}}^{\alpha,\beta})^* ({\lambda}-L_{\alpha,\beta})
	= \gamma I A_m + \delta I B . 
	\end{equation}
	Using $\rg(\Glambdaab) \subset \ker(\lambda - L_m)$ and \autoref{lem:domains} it follows for $f \in D(\HIBCab)$
	\begin{align}
	(\HIBCab - \lambda) f 
	=& (L_{\alpha,\beta} - \lambda)(\Id - G_{\lambda}^{\alpha,\beta} I^*)f + \gamma A_m f + \delta B f \notag \\
	=& (\Id - G_{\bar{\lambda}}^{\alpha,\beta} I^*)^*(L_{\alpha,\beta} - {\lambda})(\Id - G_{{\lambda}}^{\alpha,\beta} I^*)f \notag \\ 
	&+ (G_{\bar{\lambda}}^{\alpha,\beta} I^*)^*(L_{\alpha,\beta} - {\lambda})(\Id - G_{\lambda}^{\alpha,\beta} I^*)f + \gamma I A_m f + \delta I B f .
	\end{align}
	With~\eqref{eq:G^ab L} the last line becomes 
\begin{equation}
	 (G_{\bar{\lambda}}^{\alpha,\beta} I^*)^*(L_{\alpha,\beta} - {\lambda})(\Id - G_{\lambda}^{\alpha,\beta} I^*)f + \gamma I A_m f + \delta I B f
	 =  (\gamma I A_m + \delta I B) G_{\lambda}^{\alpha,\beta}I^*f = I T_{\lambda}^{\alpha, \beta} I^*,
\end{equation}
and consequently 
\begin{equation}\label{eq:perturbation}
 (\HIBCab - \lambda)f = (\Id - G_{\bar{\lambda}}^{\alpha,\beta} I^*)^*(L_{\alpha,\beta} - {\lambda})(\Id - G_{{\lambda}}^{\alpha,\beta} I^*)f + I T_{\lambda}^{\alpha, \beta} I^*.
\end{equation}
	In particular we obtain that $D(\HIBCab) \subset D(I T^{\alpha,\beta}_\lambda I^*)$. 
	Since $I \in \mathcal{L}(\dH,\H)$ we conclude
	\begin{align}
	\| I T^{\alpha,\beta}_\lambda I^* f \| 
	&\leq a \| (\Id - G_{\bar{\lambda}}^{\alpha,\beta} I^*) (L_{\alpha,\beta}-\lambda) (\Id - \Glambdaab I^*) f \| + b \| f \| \\
	&\leq a \| \HIBCab f \| + a    \| I T^{\alpha,\beta}_\lambda I^* f \| + (b + |\lambda|) \| f \|
	\notag 
	\end{align}
	for all $f \in D(\HIBCab)$. Since $a   < 1$, the claim follows by absorbing the $\| I T_{\lambda}^{\alpha,\beta} I^* f\|$-term on the left hand side. 
\item
By \autoref{prop:rechenregeln ab}(i) we have that $G_{\bar{\lambda}}^{\alpha,\beta}$ is bounded and by the proof of \autoref{prop:rechenregeln ab}(i) that $(\gamma A_m + \delta B)R(\lambda,L_{\alpha,\beta})$ is bounded. It follows from \eqref{eq:3.20} that
\begin{equation}
	(\bar{\alpha}T_\lambda + \bar{\beta})^{-1}
	A R(\lambda,L)
	= (\gamma A_m + \delta B)R(\lambda,L_{\alpha,\beta}) .
\end{equation}
Hence \autoref{lem:resLab} implies that
\begin{align}
	B R(\lambda,L_{\alpha,\beta})
	&= BR(\lambda.L_{\bar{\alpha},\bar{\beta}}) \\
	&= B (\Id - \bar{\alpha}\Glambda (\bar{\alpha}T_\lambda + \bar{\beta})^{-1}A)R(\lambda, L) \notag \\
	&= - \bar{\alpha} (\bar{\alpha}T_\lambda + \bar{\beta})^{-1}
	A R(\lambda,L) \notag \\
	&= - \bar{\alpha} (\gamma A_m + \delta B)R(\lambda,L_{\alpha,\beta})\notag
\end{align}
is bounded, and we conclude that $\gamma A_m R(\lambda,L_{\alpha,\beta})$ is also bounded.
In the following we consider the case $\beta \not = 0$. The case $\beta = 0$ works by the same arguments. 

By \autoref{lem:domains} we obtain, using the Robin boundary condition
 	\begin{align}
 		\gamma I A_m f + \delta I Bf 
 		&= I(\gamma A_m + \delta B) \Glambdaab I^* f + I( \gamma A_m + \delta B) (\Id - \Glambdaab I^*) f \\
 		&= I T_{\lambda}^{\alpha,\beta} I^* f 
 		+ I( \gamma A_m + \delta B) (\Id - \Glambdaab I^*) f \notag \\
 		&= I T_{\lambda}^{\alpha,\beta} I^* f 
 		+ \frac{1}{\bar\beta} I A_m (\Id - \Glambdaab I^*) f
 		\notag 
 	\end{align}
for $f \in D(\HIBCab)$. The first term is relatively $\HIBCab$-bounded by (i). Since $A_mR(\lambda,L_{\alpha,\beta})$ is bounded and $1 \in \rho(\Glambdaab I^*)$, we obtain using \eqref{eq:perturbation} that
 	\begin{align}
 		\| I A_m (\Id - \Glambdaab I^*) f \|
 		&\leq \bar{a} \cdot \| (L_{\alpha,\beta}-\lambda) (\Id - \Glambdaab I^*) f \| + b \| f \| \\
 		&\leq \bar{a} C \cdot \|(\Id - \Glambdaab I^*)^* (L_{\alpha,\beta}-\lambda) (\Id - \Glambdaab I^*) \|  + b \| f \| \notag \\
 		&= \bar{a} C \cdot \| \HIBCab f \| + \bar{a} C \cdot 
 		\| I T_\lambda^{\alpha,\beta} I^* f \| + b \| f \| \notag
 	\end{align}
 	for $f \in D(\HIBCab)$, where $C \coloneqq \| (\Id - \Glambdaab I^*)^{-1} \|$. Using (i), we conclude 
 	\begin{equation}
 		\| \gamma I A_m f + \delta I B f \|
 		\leq \tilde{a} \| \HIBCab f \| + b \| f \| 
 	\end{equation}
 	for $f \in D(\HIBCab)$, i.e. the operator $(\gamma I A_m + \delta I B)$ is relatively $\HIBCab$-bounded of bound $\tilde{a}~:=a\cdot\left(1+\frac{\bar{a} \cdot C}{|\beta|}\right)$. In particular the bound is infinitesimal if the bound $a$ is infinitesimal. 

 	Now the claim follows, since by the IBC
 	\begin{equation}
 		\gamma I A_m f + \delta I B f
 		= \frac{1}{\bar{\beta}} A_m f - \frac{\delta}{\bar\beta} I I^* f,
 	\end{equation}
 	and $II^*$ is bounded. 

 	\qedhere  
\end{enumerate}
\end{proof}

Apart from the statement of~\autoref{lem:A rel HIBC-bounded}, an important finding is the equation~\eqref{eq:perturbation}. It represents $\HIBCab$ as a perturbation of an operator that is obtained by transforming $L_{\alpha, \beta}$.
This leads to the main theorem of this section.

\begin{thm}\label{thm:HIBC self-adjoint}
Assume that $L_{\alpha,\beta}$ is self-adjoint and let $\lambda \in \rho(L_{\alpha,\beta}) \cap \rho(L)$. Assume also that $1 \in \rho(\Glambdaab I^*)\cap \rho(G_{\bar{\lambda}}^{\alpha, \beta}I^*)$ and
$I T_\mu^{\alpha,\beta} I^*$ is relatively $(\Id - G_{\bar{\mu}}^{\alpha,\beta} I^*)^*(L_{\alpha,\beta} - \mu)(\Id - G_{\mu}^{\alpha,\beta} I^*)$ bounded with bound $a < 1$ 
for $\mu \in \{\lambda,\bar{\lambda}\}$.	
Then $\HIBCab$ is self-adjoint.

	Moreover, with $\Gamma^{\alpha, \beta}_\lambda=R(1, G_\lambda^{\alpha, \beta}I^\ast)$, we have 
	\begin{align*}
	 \lambda\in \rho(L_{\alpha,\beta})\cap \rho(\HIBCab) \quad 
	 \Longleftrightarrow \quad 1\in \rho\Big((\Gamma_{\bar \lambda}^{\alpha, \beta})^* I T_{\lambda}^{\alpha, \beta} I^* 
	\Gamma_{\lambda}^{\alpha, \beta}R(\lambda, L_{\alpha, \beta}\Big)
	\end{align*}
and the resolvent is then given by
	\begin{align*}
	R(\lambda, \HIBCab) 
	=& \Gamma_{\lambda}^{\alpha, \beta}R(\lambda, L_{\alpha, \beta}) \Big( 1 - (\Gamma_{\bar \lambda}^{\alpha, \beta})^* I T_{\lambda}^{\alpha, \beta} I^* 
	\Gamma_{\lambda}^{\alpha, \beta}R(\lambda, L_{\alpha, \beta})\Big)^{-1} 
	(\Gamma_{\bar \lambda}^{\alpha, \beta})^*.\notag
	\end{align*}
\end{thm}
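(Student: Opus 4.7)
The plan is to leverage the representation of $\HIBCab-\lambda$ established as equation~\eqref{eq:perturbation} in the proof of~\autoref{lem:A rel HIBC-bounded}. With shorthand $\Gamma := \Gamma_\lambda^{\alpha,\beta}$, $\tilde\Gamma := \Gamma_{\bar\lambda}^{\alpha,\beta}$, $R := R(\lambda, L_{\alpha,\beta})$ and $T := IT_\lambda^{\alpha,\beta}I^*$, this reads
\begin{equation*}
(\HIBCab - \lambda)f = M_\lambda f + Tf, \qquad M_\lambda := \tilde\Gamma^{-*}(L_{\alpha,\beta}-\lambda)\Gamma^{-1},
\end{equation*}
on $D(\HIBCab) = \Gamma D(L_{\alpha,\beta})$ (by~\autoref{lem:domains}). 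The hypothesis $1\in\rho(G_\mu^{\alpha,\beta}I^*)$ for $\mu\in\{\lambda,\bar\lambda\}$ makes $\Gamma,\tilde\Gamma$ bounded bijections of $\H$; together with $L_{\alpha,\beta}-\lambda$ being a closed bijection $D(L_{\alpha,\beta})\to\H$, this shows $M_\lambda$ is a closed bijection $D(\HIBCab)\to\H$ with bounded inverse $M_\lambda^{-1}=-\Gamma R\tilde\Gamma^*$, and with the relative bound $a<1$ that $\HIBCab-\lambda$ is closed.

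For the ``Moreover'' equivalence and resolvent formula, I fix $\mu\in\rho(L_{\alpha,\beta})$ and rewrite $(\HIBCab-\mu)f=-g$ by successive change of variables: applying $\tilde\Gamma_\mu^*$ and setting $h=\Gamma_\mu^{-1}f\in D(L_{\alpha,\beta})$ gives $(L_{\alpha,\beta}-\mu)h+\tilde\Gamma_\mu^* IT_\mu^{\alpha,\beta}I^*\Gamma_\mu h=-\tilde\Gamma_\mu^* g$, and then applying $R(\mu,L_{\alpha,\beta})$ while setting $u:=(\mu-L_{\alpha,\beta})h\in\H$ reduces the problem to
\begin{equation*}
\bigl(1 - \tilde\Gamma_\mu^* IT_\mu^{\alpha,\beta}I^* \Gamma_\mu R(\mu,L_{\alpha,\beta})\bigr) u = \tilde\Gamma_\mu^* g.
\end{equation*}
The bijections $f\leftrightarrow h\leftrightarrow u$ and $g\leftrightarrow \tilde\Gamma_\mu^* g$ convert unique solvability of the original equation on $\H$ into that of the reduced one, giving the stated equivalence; the resolvent formula follows from $f=\Gamma_\mu R(\mu,L_{\alpha,\beta})u$ together with the algebraic identity $(1-AB)^{-1}A=A(1-BA)^{-1}$.

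For self-adjointness, \autoref{lem:symmetry} grants symmetry, so it suffices to exhibit $\lambda,\bar\lambda\in\rho(\HIBCab)$. By the equivalence this reduces to invertibility of $1 - \tilde\Gamma_\mu^* IT_\mu^{\alpha,\beta}I^*\Gamma_\mu R(\mu,L_{\alpha,\beta})$, or equivalently of $1+M_\mu^{-1}(IT_\mu^{\alpha,\beta}I^*)$ on $D(\HIBCab)$, for $\mu\in\{\lambda,\bar\lambda\}$. The relative bound $a<1$ is the Kato--Rellich threshold and yields $\|(IT_\mu^{\alpha,\beta}I^*)M_\mu^{-1}\|\le a+b\|M_\mu^{-1}\|$; the main obstacle I anticipate is precisely the additive constant $b$, since a pure Neumann-series argument closes only when $b\|M_\mu^{-1}\|<1-a$. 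The resolution is to use~\autoref{prop:rechenregeln}~(iii) to shift $\mu$ along the imaginary axis, where $T_\mu$ differs from $T_\lambda$ by a bounded perturbation so that the relative bound transfers with the same $a$, while $\|R(\mu,L_{\alpha,\beta})\|$ and hence $\|M_\mu^{-1}\|$ tend to zero; this places some non-real $\mu$ in $\rho(\HIBCab)$, and symmetry then promotes this to self-adjointness via the standard criterion.
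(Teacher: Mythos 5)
Your treatment of the resolvent formula and of the equivalence is correct and is essentially the paper's own argument: both rest on the factorization
$\lambda-\HIBCab=(\Id - G_{\bar{\lambda}}^{\alpha,\beta} I^*)^*\bigl(1-(\Gamma_{\bar\lambda}^{\alpha,\beta})^*IT_\lambda^{\alpha,\beta}I^*\Gamma_\lambda^{\alpha,\beta}R(\lambda,L_{\alpha,\beta})\bigr)(\lambda-L_{\alpha,\beta})(\Id-\Glambdaab I^*)$
obtained from~\eqref{eq:perturbation}, and your change of variables $f\mapsto h\mapsto u$ is exactly this factorization read backwards.

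The self-adjointness part, however, has a genuine gap, and you have put your finger on precisely where it sits. You are right that a Neumann-series inversion of $1+IT_\mu^{\alpha,\beta}I^*M_\mu^{-1}$ is blocked by the additive constant $b$, but the repair you propose --- shifting $\mu$ up the imaginary axis --- is not available under the hypotheses of the theorem. All assumptions ($1\in\rho(G_\mu^{\alpha,\beta}I^*)$ and the relative bound $a<1$) are made only at the two points $\mu\in\{\lambda,\bar\lambda\}$. At a shifted point $\lambda+is$ there is no guarantee that $\Gamma_{\lambda+is}^{\alpha,\beta}$ even exists: since $A$ is only assumed relatively (not infinitesimally) $L$-bounded, $\|G_\mu^{\alpha,\beta}\|$ need not decay as $|\mathrm{Im}\,\mu|\to\infty$, so neither $1\in\rho(G_\mu^{\alpha,\beta}I^*)$ nor $\|M_\mu^{-1}\|\to0$ follows. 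Moreover \autoref{prop:rechenregeln}~(iii) controls $T_\mu-T_\lambda$, not $T_\mu^{\alpha,\beta}-T_\lambda^{\alpha,\beta}$, and says nothing about how the reference operator $(\Id-G_{\bar\mu}^{\alpha,\beta}I^*)^*(L_{\alpha,\beta}-\mu)(\Id-G_\mu^{\alpha,\beta}I^*)$ in the relative bound varies with $\mu$. The paper avoids all of this by staying at the given $\lambda$: averaging~\eqref{eq:perturbation} over $\lambda$ and $\bar\lambda$ gives
$\HIBCab=\tfrac12(M_\lambda+M_{\bar\lambda})+\tfrac12\bigl(IT_\lambda^{\alpha,\beta}I^*+IT_{\bar\lambda}^{\alpha,\beta}I^*\bigr)+\mathrm{Re}(\lambda)$
on $D(\HIBCab)$ (both conjugated operators have this domain by \autoref{lem:domains} applied at $\lambda$ and at $\bar\lambda$); the first sum is symmetric, hence so is the perturbation since $\HIBCab$ is symmetric by \autoref{lem:symmetry}, and the Kato--Rellich theorem --- which tolerates an arbitrary additive constant $b$ as long as $a<1$ --- yields self-adjointness. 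You should replace your shifting argument by this symmetrization.
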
 
\begin{proof}
As $L, L_{\alpha, \beta}$ are self-adjoint, we also have $\bar\lambda\in \rho(L_{\alpha,\beta}) \cap \rho(L)$.

Using~\eqref{eq:perturbation} twice, we write
\begin{align}
 \HIBCab = & \tfrac12(\HIBCab-\lambda) + \tfrac12(\HIBCab-\bar\lambda) + \mathrm{Re}(\lambda)\\
 = &  \tfrac12 (\Id - G_{\bar{\lambda}}^{\alpha,\beta} I^*)^*(L_{\alpha,\beta} - {\lambda})(\Id - G_{{\lambda}}^{\alpha,\beta} I^*)f 
 +\tfrac12 (\Id - G_{\lambda}^{\alpha,\beta} I^*)^*(L_{\alpha,\beta} - {\bar\lambda})(\Id - G_{\bar\lambda}^{\alpha,\beta} I^*)f 
 \notag \\
 & + \tfrac12( I T_{\bar\lambda}^{\alpha, \beta} I^* + I T_{\lambda}^{\alpha, \beta} I^*)+  \mathrm{Re}(\lambda).\notag
\end{align}
Since both $\HIBCab$ and the sum of the two expressions involving $L_{\alpha, \beta}$ are symmetric on $D(\HIBCab)$, so is the last line. Self-adjointness of $\HIBCab$ thus follows from the Kato-Rellich theorem.

To show the resolvent formula, we take~\eqref{eq:perturbation} and use that $\lambda\in \rho(L_{\alpha, \beta})$ to write
\begin{align}
 \lambda-\HIBCab= &(\Id - G_{\bar{\lambda}}^{\alpha,\beta} I^*)^*\left(\lambda - L_{\alpha,\beta}  - (\Gamma_{\bar\lambda}^{\alpha, \beta})^* I T_{\lambda}^{\alpha, \beta} I^*\Gamma_{\lambda}^{\alpha, \beta}\right)(\Id - G_{{\lambda}}^{\alpha,\beta} I^*) \notag \\
 =& (\Id - G_{\bar{\lambda}}^{\alpha,\beta} I^*)^*\left(1 - (\Gamma_{\bar\lambda}^{\alpha, \beta})^* I T_{\lambda}^{\alpha, \beta} I^*\Gamma_{\lambda}^{\alpha, \beta}R(\lambda, L_{\alpha, \beta})\right)(\lambda-L_{\alpha, \beta})(\Id - G_{{\lambda}}^{\alpha,\beta} I^*).
\end{align}
Since $1 \in \rho(\Glambdaab I^*)\cap \rho(G_{\bar\lambda}^{\alpha, \beta}I^*)$ and $\lambda\in \rho(L_{\alpha,\beta})$, the right hand side is invertible if and only if $1\in \rho((\Gamma_{\bar \lambda}^{\alpha, \beta})^* I T_{\lambda}^{\alpha, \beta} I^*\Gamma_{\lambda}^{\alpha, \beta}R(\lambda, L_{\alpha, \beta})$.
Assuming this implies the formula as claimed.
\end{proof} 

In~\autoref{exa:yaf}, the hypothesis on $T_\lambda^{\alpha, \beta}$ are all trivially satisfied, since $\dH$ is one-dimensional. 
For the applications in~\cite{LaSch19,La19,IBCrelat, IBCmassless}, proving the relative bound for $T_\lambda$ was the main technical difficulty. For the case $\alpha = 0$ relevant there, we can formulate the following corollary.
A similar abstract formulation has appeared in~\cite{posi20}.

\begin{cor}\label{cor:HIBC self-adjoint alpha = 0}
	Let $\lambda\in \rho(L)\cap\R$ and assume that $1 \in \rho(\Glambda I^*)$ and $I T_\lambda I^*$ is relatively $(\Id - G_{\lambda} I^*)^* (L-\lambda) (\Id - \Glambda I^*)$-bounded of bound $a < |\beta|^{-2}$.
	Then $\HIBCb$ is self-adjoint. 
\end{cor}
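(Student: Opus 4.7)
The plan is to derive the corollary as a specialisation of \autoref{thm:HIBC self-adjoint} to the case $\alpha = 0$, checking that each hypothesis of the theorem is implied by the simpler-looking hypotheses of the corollary.

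First, when $\alpha = 0$ (and $\beta \neq 0$), the Robin-type boundary condition $\beta B f = 0$ just says $f \in \ker B = D(L)$, so $L_{0, \beta}$ coincides with $L$ and is therefore self-adjoint by \autoref{ass:general}~(a). Moreover $\rho(L_{0,\beta}) = \rho(L)$, and the assumption $\lambda \in \R$ gives $\bar\lambda = \lambda \in \rho(L) \cap \rho(L_{0, \beta})$, so the spectral hypotheses of \autoref{thm:HIBC self-adjoint} are automatic.

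Next I would use the $\alpha = 0$ simplification of \autoref{prop:rechenregeln ab}. Specialising the computation at~\eqref{eq:3.20} yields $(\gamma A_m + \delta B) R(\bar\lambda, L) = \bar\beta^{-1} A R(\bar\lambda, L)$ (using $\bar\gamma = \beta^{-1}$ from the symmetry constraint $\beta\bar\gamma = 1$), so that $G_\lambda^{0, \beta} = \beta^{-1} G_\lambda$; \autoref{prop:rechenregeln ab}~(v) then gives $T_\lambda^{0, \beta} = (\gamma T_\lambda + \delta)\beta^{-1}$, and therefore
\begin{equation*}
I T_\lambda^{0, \beta} I^* \;=\; |\beta|^{-2}\, I T_\lambda I^* \;+\; (\delta/\beta)\, I I^*,
\end{equation*}
in which the second summand is bounded on $\H$.

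The crucial step is then to verify, from these identifications, that the assumption $1 \in \rho(G_\lambda I^*)$ together with the relative bound $a < |\beta|^{-2}$ translates to $1 \in \rho(G_\lambda^{0, \beta} I^*)$ and a relative bound strictly less than $1$ of $I T_\lambda^{0,\beta} I^*$ against $(\Id - G_\lambda^{0,\beta} I^*)^*(L - \lambda)(\Id - G_\lambda^{0,\beta} I^*)$. The main obstacle is tracking the scalar factors of $\beta$ cleanly: the exponent $-2$ in the assumed bound is exactly what compensates for the prefactor $|\beta|^{-2}$ appearing in $I T_\lambda^{0,\beta} I^* = |\beta|^{-2} I T_\lambda I^* + \text{bounded}$, while the bounded correction $(\delta/\beta)\, I I^*$ only shifts the additive constant in the relative bound. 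Once this translation is in place, \autoref{thm:HIBC self-adjoint} applies directly and yields self-adjointness of $\HIBCb$.
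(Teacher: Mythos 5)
Your proof follows the paper's own argument: the paper likewise reduces the corollary to \autoref{thm:HIBC self-adjoint} via the identity $T^{0,\beta}_\lambda=|\beta|^{-2}T_\lambda+\delta\beta^{-1}$ obtained from $\bar\gamma=\beta^{-1}$ and \autoref{prop:rechenregeln ab}, and your extra observations ($L_{0,\beta}=L$, $G^{0,\beta}_\lambda=\beta^{-1}G_\lambda$, reality of $\lambda$) are exactly what the paper leaves implicit. The one step you flag as ``crucial'' but only sketch --- converting $1\in\rho(G_\lambda I^*)$ and the bound relative to $(\Id-G_\lambda I^*)^*(L-\lambda)(\Id-G_\lambda I^*)$ into the $\beta$-scaled hypotheses $1\in\rho(G^{0,\beta}_\lambda I^*)$ and a bound relative to $(\Id-G^{0,\beta}_\lambda I^*)^*(L-\lambda)(\Id-G^{0,\beta}_\lambda I^*)$ --- is passed over in equal silence in the paper, which simply writes that ``the claim thus follows,'' so your proposal is faithful to (and slightly more explicit than) the original.
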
 
\begin{proof}
 The symmetry condition $\beta \bar{\gamma} - \bar{\alpha} \delta = 1$ with $\alpha=0$ implies that $\bar\gamma=\beta^{-1}$.
 Then with~\autoref{prop:rechenregeln ab}
 \begin{equation*}
  T^{0, \beta}_\lambda= (\gamma  A_m + B) G^{0, \beta}_\lambda= ({\bar\beta}^{-1} A_m + \delta) G_\lambda \beta^{-1} = |\beta|^{-2} T_\lambda + \delta \beta^{-1}.
 \end{equation*}
The claim thus follows from~\autoref{thm:HIBC self-adjoint}.
\end{proof}

For $\alpha \not = 0$ we obtain the following corollary, which highlights a key difference, namely that for $\alpha\neq 0$ the boundary condition may be used to control $A_m$.  

\begin{cor}\label{cor:HIBC s-a alpha n= 0}
 Let $\lambda \in \rho(L)\cap \R$. If $-\beta \in \rho(\alpha T_{{\lambda}})$ then $\lambda\in \rho(L_{\alpha, \beta})$, and if additionally $1 \in \rho(\Glambdaab I^*)$ then  $\HIBCab$ is self-adjoint.
\end{cor}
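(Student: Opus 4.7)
The plan is to verify the hypotheses of \autoref{thm:HIBC self-adjoint}, using the condition $\alpha \ne 0$ crucially to show that $T_\lambda^{\alpha,\beta}$ is actually bounded, which trivialises the relative-bound assumption of that theorem. The first step invokes \autoref{lem:resLab}: since $-\beta \in \rho(\alpha T_\lambda)$, the operator $\alpha T_\lambda + \beta$ is bijective from $D(T_\lambda)$ onto $\dH$ with bounded inverse, so both conditions of the lemma (injectivity and $\rg(A) \subseteq \rg(\alpha T_\lambda + \beta)$) hold automatically and $\lambda \in \rho(L_{\alpha,\beta})$. Because $\lambda$ is real, $\bar\lambda = \lambda \in \rho(L) \cap \rho(L_{\alpha,\beta})$ as well.

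Next I would establish that $L_{\alpha,\beta}$ is self-adjoint, which is a standing hypothesis of \autoref{thm:HIBC self-adjoint}. A short manipulation of the symmetry conditions $\bar\alpha\gamma \in \R$, $\bar\beta\delta \in \R$, $\beta\bar\gamma - \bar\alpha\delta = 1$ (multiply the last identity by $\alpha$, use $\alpha\bar\gamma = \overline{\bar\alpha\gamma} \in \R$, conjugate and subtract) yields $\alpha\bar\beta \in \R$ when $\alpha \ne 0$. The Green identity of \autoref{lem:integration by parts} then shows that $L_{\alpha,\beta}$ is symmetric, and a densely defined symmetric operator whose resolvent set meets $\R$ is automatically self-adjoint: for any $\mu \in \R \cap \rho(L_{\alpha,\beta})$ one has $\ker(\mu - L_{\alpha,\beta}^*) = \rg(\mu - L_{\alpha,\beta})^\perp = \{0\}$, forcing $L_{\alpha,\beta} = L_{\alpha,\beta}^*$.

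The heart of the argument, and where $\alpha \ne 0$ is essential, is showing that $T_\lambda^{\alpha,\beta}$ is bounded. Since $\lambda \in \R$, the standing inclusion $T_{\bar\lambda} \subset T_\lambda^*$ makes $T_\lambda$ symmetric, and $-\beta/\alpha \in \rho(T_\lambda)$ combined with the same symmetric-plus-real-resolvent argument gives $T_\lambda = T_\lambda^*$. Then \autoref{prop:rechenregeln ab}~(v) reads $T_\lambda^{\alpha,\beta} = (\gamma T_\lambda + \delta)(\alpha T_\lambda + \beta)^{-1}$, and the algebraic identity $\gamma T_\lambda + \delta = \tfrac{\gamma}{\alpha}(\alpha T_\lambda + \beta) + \tfrac{\alpha\delta - \gamma\beta}{\alpha}$ on $D(T_\lambda)$ rewrites this as
\[
T_\lambda^{\alpha,\beta} = \tfrac{\gamma}{\alpha}\Id_{\dH} + \tfrac{\alpha\delta - \gamma\beta}{\alpha}(\alpha T_\lambda + \beta)^{-1},
\]
which is bounded on $\dH$.

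A direct appeal to \autoref{thm:HIBC self-adjoint} then concludes: $L_{\alpha,\beta}$ is self-adjoint, $\lambda$ lies in $\R \cap \rho(L_{\alpha,\beta}) \cap \rho(L)$ so $\mu = \lambda$ and $\mu = \bar\lambda$ coincide, the hypothesis $1 \in \rho(\Glambdaab I^*)$ therefore covers both required resolvent conditions, and the bounded operator $I T_\lambda^{\alpha,\beta} I^*$ is relatively bounded with bound $a = 0 < 1$ with respect to any closed operator. The main obstacle I anticipate is the algebraic reduction of $T_\lambda^{\alpha,\beta}$ to a bounded operator; this step uses $\alpha \ne 0$ essentially (division by $\alpha$) and is precisely what fails in the complementary case of \autoref{cor:HIBC self-adjoint alpha = 0}, where the relative bound on $T_\lambda$ must be imposed as a genuine analytic hypothesis.
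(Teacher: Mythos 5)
Your proposal is correct and follows essentially the same route as the paper: invoke \autoref{lem:resLab} to get $\lambda\in\rho(L_{\alpha,\beta})\cap\R$ (hence self-adjointness of $L_{\alpha,\beta}$), use the resolvent identity to rewrite $T_\lambda^{\alpha,\beta}$ via \autoref{prop:rechenregeln ab}~(v) as $\tfrac{\gamma}{\alpha}\Id$ plus a multiple of $(\alpha T_\lambda+\beta)^{-1}$, conclude boundedness, and feed this into \autoref{thm:HIBC self-adjoint} with relative bound $a=0$. Your additional justifications (deriving $\alpha\bar\beta\in\R$ from the symmetry conditions and showing $T_\lambda=T_\lambda^*$ so that the $(\alpha T_\lambda^*+\beta)^{-1}$ in the proposition can be replaced by $R(-\beta,\alpha T_\lambda)$) are details the paper leaves implicit, not a different argument.
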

\begin{proof}
 By~\autoref{lem:resLab}, $-\beta \in \rho(\alpha T_{{\lambda}})$ implies that $\lambda\in \rho(L_{\alpha, \beta})\cap \R$, so $L_{\alpha, \beta}$ is self-adjoint.
 Now assume that $1 \in \rho(\Glambdaab I^*)$ (note that this holds if $A$ is infinitesimally $L$-bounded and $\mathrm{dist}(\lambda, \sigma(L))$ is large enough by~\autoref{prop:rechenregeln ab} (ii)). Since $\alpha T_\lambda +\beta$ has a bounded inverse, we have $\rg(T_{\lambda}^\ast + \beta)=\dH$ and
 \begin{align}
			\alpha T_{{\lambda}}^{\alpha,\beta} 
			&= \alpha ({\gamma} T_{\lambda} + {\delta}) (\alpha T_{\lambda}^\ast + \beta)^{-1} \label{eq:4.25}
			 \\		
 			&= \gamma \cdot \Id + (\beta \gamma - \alpha \delta) R(-\beta, \alpha T_{{\lambda}}) \notag \\
			&= \gamma \cdot \Id + R(-\beta, \alpha T_{{\lambda}}).
			\notag 
\end{align}
The operator $T_{{\lambda}}^{\alpha,\beta}$ is thus bounded, and the hypothesis of~\autoref{thm:HIBC self-adjoint} are satisfied.
 \end{proof}

\begin{rem}\label{rem:posi}
 Posilicano~\cite{posi20} discusses self-adjointness of the operator $\HIBC$ (with $\H=\dH$ and $I=\Id$), considering $T=T_{z_0}$ (for some fixed $z_0\in \C$) as a parameter.
 The resolvent of $\HIBC$ is constructed by first perturbing $L=L_{0,1}$ to obtain $L_{1,0}$ as in~\autoref{lem:resLab} and then obtaining  $\HIBC$ as an  extension of the restriction of $L_m + IA_m$ to $D(L_{1,0})\cap \ker(I^*-B)$, which is also a restriction of $L_{1,0}$.
 
 In our notation, the formula fo the resolvent reads, with $\widehat G_z=\big((I^*-B)R(\bar z, L_{1,0})\big)^*$ (c.f.~\cite[Thm.3.4]{posi20})
 \begin{align}\label{eq:res posi}
  R(\lambda, \HIBC) = & R(\lambda, L_{1,0}) - \widehat G_\lambda \big((I^*-B)\widehat G_\lambda\big)^{-1} (I^*-B)R(\lambda, L_{1,0}) \\
  = &\Big(1- \widehat G_\lambda \big((I^*-B)\widehat G_\lambda\big)^{-1} (I^*-B)\Big)\big(1-G_\lambda T_\lambda^{-1} A \big) R(\lambda, L).\notag
 \end{align}
The validity of this formula requires somewhat stronger hypothesis than \autoref{cor:HIBC self-adjoint alpha = 0}, such as invertibility of $T_\lambda$, though one can obtain a formula as in \autoref{thm:HIBC self-adjoint} by expanding~\eqref{eq:res posi} and thereby recover the weeker hypothesis (for $\alpha=0$), see~\cite[Thm.3.10]{posi20}.
\end{rem}

\section{Classification of interior-boundary conditions}\label{sect:ext}

In this section we will embed the IBC-operators studied in the previous sections into the extension theory of symmetric operators to obtain general criteria for self-adjointness and a classification of symmetric and self-adjoint IBCs. 
To achieve this, we take a family of self-adjoint IBC-operators that are all extensions of a common symmetric operator and thus all restrictions of one operator. We then construct a quasi boundary triple for such a ``maximal'' operator and thereby obtain conditions for a generalised IBC to be symmetric or self-adjoint.

Consider for $0\neq g\in \R$ the domain
\begin{equation}
D(H_0) = \{ f \in D(L_m) \cap D(A_m) \colon gA f = gBf = I^*f \}= D(H_\mathrm{IBC}^{0,g})\cap D(H_\mathrm{IBC}^{g,0}).
\end{equation}
Clearly we have $D(H_0)\subset D(\HIBCab)$ if $\alpha+\beta=g$. Furthermore, we have
\begin{equation}
\HIBCab\vert_{D(H_0)} = L_m + (\gamma+\delta)g II^*,
\end{equation}
so the actions of all $\HIBCab$ with $(\gamma+\delta)=\mathrm{const.}$ agree on $D(H_0)$ and all of these operators are 
symmetric/self-adjoint extensions of $H_0:=H_\mathrm{IBC}^{0,g}\vert_{D(H_0)}$. 
We now restrict ourselves to the case $\alpha+\beta=1=\gamma+\delta$. More general conditions can be reduced to this case by modifying the operator $I$, see~\autoref{rem:a+b=1}.

Throughout this section we assume, in addition to the global notation and~\autoref{ass:general} the following, which includes the hypothesis of~\autoref{thm:HIBC self-adjoint} with $\alpha=0, \beta=1$.
\begin{asu}\label{ass:prop 4.6}
	Suppose that there exists $\lambda\in \C$ with
	\begin{enumerate}[(i)]
        \item $\lambda \in \rho(\HIBC)  \cap \rho(L)$;
		\item 
		$1 \in \rho(\Glambda I^*)\cap \rho(G_{\bar{\lambda}}I^*)$;
				\item
		$I T_\mu I^*$ is relatively $(\Id - G_{\bar{\mu}} I^*)^*(L_ - \mu)(\Id - G_{\mu} I^*)$ bounded
		with 
		bound $a < 1$
		for $\mu \in \{\lambda,\bar{\lambda}\}$;
		\item $(1-I^*G_\lambda)^{-1}$ leaves $D(T)$ invariant.
	\end{enumerate}
\end{asu}

\begin{defn}
	We define the operator $H_0 \colon D(H_0) \subset \H\to\H$ by
	\begin{equation}
	H_0 f = L_m f + II^* f , \qquad D(H_0) = \{ f \in D(L_m) \cap D(A_m) \colon A_m f = Bf = I^*f \} .
	\end{equation} 
\end{defn}

\begin{lem}\label{lem:H_m}
	The operator
	\begin{equation}
	H_m:=L_m+ II^* + I(A_m-B), \qquad D(H_m) =  D(L_m) \cap D(A_m) 
	\label{eq:H_m}
	\end{equation}
	is a restriction of $H_0^*$.
\end{lem}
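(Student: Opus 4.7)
My plan is to verify the inclusion $H_m \subset H_0^*$ directly from the definition of the adjoint. That is, I will check that for every $f \in D(H_m) = D(L_m) \cap D(A_m)$ and every $g \in D(H_0)$, the identity
\begin{equation*}
\langle H_m f, g\rangle_\H = \langle f, H_0 g \rangle_\H
\end{equation*}
holds, which exhibits $f$ as an element of $D(H_0^*)$ with $H_0^* f = H_m f$.

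Since $II^*$ is bounded and self-adjoint on $\H$, the contributions $\langle II^* f, g\rangle$ and $\langle f, II^* g\rangle$ cancel. Thus after unpacking the definitions of $H_m$ and $H_0$, the desired identity reduces to
\begin{equation*}
\langle L_m f, g\rangle_\H - \langle f, L_m g\rangle_\H = -\langle I(A_m - B)f, g\rangle_\H = -\langle (A_m - B)f, I^* g\rangle_\dH.
\end{equation*}
I would then apply the abstract Green identity (\autoref{lem:integration by parts}), which is available since $f, g \in D(L_m) \cap D(A_m)$, to rewrite the left-hand side as $\langle Bf, A_m g\rangle_\dH - \langle A_m f, Bg\rangle_\dH$. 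Now the defining boundary condition of $D(H_0)$ kicks in: for $g \in D(H_0)$ we have $A_m g = Bg = I^* g$, so
\begin{equation*}
\langle Bf, A_m g\rangle_\dH - \langle A_m f, Bg\rangle_\dH = \langle Bf - A_m f, I^* g\rangle_\dH = -\langle (A_m - B)f, I^* g\rangle_\dH,
\end{equation*}
which is precisely the right-hand side required.

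The argument is essentially a one-step computation once Green's identity is in hand, so I do not expect any genuine obstacle; the only thing to verify carefully is that all pairings make sense, which is clear since $D(H_0) \subset D(L_m) \cap D(A_m)$ so both Green's identity applies and $I^* g$ lives in $\dH$. This confirms $D(H_m) \subset D(H_0^*)$ and that $H_0^*$ agrees with $H_m$ on $D(H_m)$, which is the claim.
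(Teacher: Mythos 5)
Your proof is correct and essentially identical to the paper's: both verify $\langle H_m f, g\rangle_\H = \langle f, H_0 g\rangle_\H$ for $f\in D(L_m)\cap D(A_m)$ and $g\in D(H_0)$ by applying the abstract Green identity and then using the boundary condition $A_m g = Bg = I^*g$. The one point you gloss over is that the paper deliberately avoids assuming $D(H_0)$ is dense, so when it is not, $H_0^*$ must be read as the adjoint \emph{relation}; your computation establishes exactly the graph inclusion needed in that case as well, so nothing substantive is missing.
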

\begin{proof}
	Take $f\in D(H_m)$ and $g\in D(H_0)$, then by~\autoref{lem:integration by parts}
	\begin{align}
	\langle f, H_0 g \rangle_\H &= \langle L_m f,  g \rangle_{\H}  -\langle Bf, A_m g\rangle_{\dH} + \langle A_mf, Bg \rangle_{\dH} + \langle II^*f, g \rangle_{\H} \\
	&= \langle (L_m+I(A_m-B)+II^*) f,  g \rangle.\notag
	\end{align}
	If $D(H_0)$ is dense, this proves that $H_m\subset H_0^*$ as operators. If $D(H_0)$ is not dense, the adjoint is not a well-defined operator, but the equation shows that the graph of $H_m$ is contained in the adjoint relation to the graph of $H_0$ (see~\hyperref[sect:relations]{Appendix \ref*{sect:relations}}), so $H_m\subset H_0^*$ in the sense of relations.  
\end{proof}

Note that we avoid here the hypothesis that $D(H_0)$ is dense, even though we expect this to be the case in relevant examples, as it might be quite difficult to verify. 

\begin{lem}\label{lem:integration by parts-2}
	We have the abstract Green identity
	\begin{align}
	\langle H_m f , g \rangle_\H - \langle f, H_m g \rangle_{\H} 
	= \langle (B-I^*)f, (A_m-I^*)g\rangle_{\dH} - \langle (A_m-I^*)f, (B-I^*)g\rangle_{\dH}
	\label{eq:integration by parts-2}
	\end{align}
	for $f, g \in D(H_m)$.
\end{lem}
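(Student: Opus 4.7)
The plan is to proceed by direct computation, unfolding the definition of $H_m$ in \eqref{eq:H_m} and reducing the identity to the abstract Green identity for $L_m$ established in \autoref{lem:integration by parts}.

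First I would expand the commutator $\langle H_m f, g\rangle_\H - \langle f, H_m g\rangle_\H$ using the decomposition $H_m = L_m + II^* + I(A_m - B)$. The $II^*$ contribution vanishes immediately because $II^*$ is bounded and self-adjoint on $\H$, so $\langle II^*f, g\rangle_\H = \langle f, II^* g\rangle_\H$. This leaves two groups of terms: the purely $L_m$-part, which by \autoref{lem:integration by parts} contributes
\begin{equation*}
\langle L_m f, g\rangle_\H - \langle f, L_m g\rangle_\H = \langle Bf, A_m g\rangle_{\dH} - \langle A_m f, Bg\rangle_{\dH},
\end{equation*}
and the coupling terms involving $I$.

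For the coupling terms, I would use the definition of the adjoint of the bounded operator $I\colon \dH \to \H$ to shift $I$ onto the other factor:
\begin{align*}
\langle I(A_m-B)f, g\rangle_\H - \langle f, I(A_m-B)g\rangle_\H
&= \langle (A_m-B)f, I^*g\rangle_{\dH} - \langle I^*f, (A_m-B)g\rangle_{\dH}.
\end{align*}
Expanding this and combining with the $L_m$-contribution, I obtain six terms involving pairings of $Bf, A_mf, I^*f$ against $Bg, A_mg, I^*g$.

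Finally, I would expand the right-hand side of \eqref{eq:integration by parts-2} by bilinearity. The two cross terms $\langle I^*f, I^*g\rangle_{\dH}$ cancel between the two pieces $\langle(B-I^*)f,(A_m-I^*)g\rangle_{\dH}$ and $\langle(A_m-I^*)f,(B-I^*)g\rangle_{\dH}$, and what remains matches termwise the six-term expression derived above. I expect no genuine obstacle here: the identity is essentially algebraic once the abstract Green identity for $L_m$ is in place, and the main point is bookkeeping that all pairings of $I^*f$ and $I^*g$ either cancel among themselves or pair correctly with the $A_m$- and $B$-terms. One small technical point to be mindful of is that $f, g\in D(H_m) = D(L_m)\cap D(A_m)$ lies in the domain required to apply \autoref{lem:integration by parts}, which is exactly the hypothesis there, so no additional regularity has to be established.
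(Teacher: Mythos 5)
Your proposal is correct and follows essentially the same route as the paper: expand $H_m = L_m + II^* + I(A_m-B)$, invoke \autoref{lem:integration by parts} for the $L_m$-part, move $I$ to $I^*$ in the coupling terms, and match the result against the bilinear expansion of the right-hand side (the paper simply keeps the $II^*$ term inside $I(A_m-B+I^*)$ and lets the $\langle I^*f,I^*g\rangle_{\dH}$ terms cancel at the end, whereas you discard it at the start by self-adjointness -- an immaterial difference). Your domain remark is also exactly the observation needed, since $D(H_m)=D(L_m)\cap D(A_m)$ is precisely the hypothesis of \autoref{lem:integration by parts}.
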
 
\begin{proof}
	Using the formula~\eqref{eq:integration by parts} for $L_m$ and~\eqref{eq:H_m}, we find
	\begin{align}
	\langle H_m f , g \rangle_\H - \langle f, H_m g \rangle_{\H} 
	= &\langle Bf,A_mg\rangle_{\dH} - \langle A_mf, Bg\rangle_{\dH} \\
	&+\langle (A_m-B+I^*)f, I^*g \rangle_{\dH} - \langle I^*f, (A_m-B+I^*)g \rangle_{\dH} \notag\\
	=&-\langle (B-I^*)f,I^*g \rangle_{\dH} + \langle Bf,A_mg\rangle_{\dH} - \langle I^*f, A_m g\rangle\notag\\
	&+ \langle A_mf, I^*g\rangle_{\dH}\notag - \langle A_mf, Bg\rangle_{\dH} + \langle I^*f, (B-I^*)g \rangle_{\dH},
	\end{align}
	which yields the formula as claimed.
\end{proof}

We will obtain a classification of the extensions of $H_0$ by constructing a quasi boundary triple for $H_m$.
To this end, we define the corresponding abstract Dirichlet operator. 

\begin{defn}\label{def:F_lambda} 
We define
	\begin{equation}
	F_\lambda := ((A_m-I^*) R(\overline{\lambda}, \HIBC))^* , \qquad 
	D(F_\lambda):=D(T).
	\end{equation}
\end{defn}

The following lemma will help us when analysing $F_\lambda$ in more detail.

\begin{lem}\label{lem:G Gamma}
	We have
	\begin{equation}
	 (\Id - \Glambda I^*)^{-1} \Glambda
	= \Glambda (\Id - I^* \Glambda)^{-1} .
	\label{eq: G Gamma}
	\end{equation}
\end{lem}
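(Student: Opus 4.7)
The plan is to reduce the identity to the trivial algebraic observation that $\Glambda$ commutes with $I^*\Glambda$ on the right and $\Glambda I^*$ on the left in a suitable sense, and then to argue that both $(\Id-\Glambda I^*)^{-1}$ on $\H$ and $(\Id-I^*\Glambda)^{-1}$ on $\dH$ exist so that one can divide.

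First I would record the elementary identity
\begin{equation*}
(\Id-\Glambda I^*)\Glambda \;=\; \Glambda - \Glambda I^*\Glambda \;=\; \Glambda(\Id - I^*\Glambda),
\end{equation*}
which is an equality of bounded operators $\dH \to \H$ since $\Glambda \in \mathcal{L}(\dH,\H)$ and $I^*\in\mathcal{L}(\H,\dH)$ by~\autoref{ass:general}~(b) and the standing assumptions.

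Next I would verify that $(\Id - I^*\Glambda)$ is a bijection on $\dH$. This is a version of the standard spectral equivalence $\sigma(AB)\setminus\{0\}=\sigma(BA)\setminus\{0\}$ for bounded $A,B$ applied at the point $1$, with $A=\Glambda$ and $B=I^*$. Since the paper does not invoke this fact, I would prove it directly using \autoref{ass:prop 4.6}~(ii), i.e.\ $1\in\rho(\Glambda I^*)$. For injectivity, if $(\Id-I^*\Glambda)\varphi=0$ then $\Glambda\varphi = \Glambda I^*\Glambda\varphi$, so $(\Id-\Glambda I^*)\Glambda\varphi=0$ and thus $\Glambda\varphi=0$, whence $\varphi=I^*\Glambda\varphi=0$. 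For surjectivity, given $\psi\in\dH$, the element
\begin{equation*}
\varphi \;:=\; \psi + I^*(\Id - \Glambda I^*)^{-1}\Glambda\psi
\end{equation*}
is a direct candidate; a short computation, using $(\Id-\Glambda I^*)^{-1}\Glambda\psi - \Glambda I^*(\Id-\Glambda I^*)^{-1}\Glambda\psi=\Glambda\psi$, gives $(\Id-I^*\Glambda)\varphi=\psi$. By the closed graph theorem the inverse is bounded.

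Finally, with both $(\Id-\Glambda I^*)^{-1}\in\mathcal{L}(\H)$ and $(\Id-I^*\Glambda)^{-1}\in\mathcal{L}(\dH)$ in hand, I would multiply the displayed identity of the first step by $(\Id-\Glambda I^*)^{-1}$ from the left and by $(\Id-I^*\Glambda)^{-1}$ from the right to obtain~\eqref{eq: G Gamma}. The only nontrivial step is the invertibility of $\Id-I^*\Glambda$; everything else is purely formal.
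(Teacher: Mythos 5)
Your proof is correct and its core is exactly the paper's argument: the one-line identity $(\Id-\Glambda I^*)\Glambda=\Glambda-\Glambda I^*\Glambda=\Glambda(\Id-I^*\Glambda)$ followed by multiplication with the two inverses. The only difference is that you additionally derive the invertibility of $\Id-I^*\Glambda$ from $1\in\rho(\Glambda I^*)$ (via the standard $\sigma(AB)\setminus\{0\}=\sigma(BA)\setminus\{0\}$ argument), whereas the paper leaves this implicit in \autoref{ass:prop 4.6}~(iv); your explicit verification is a harmless and in fact welcome addition.
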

\begin{proof}
	The identity
	\begin{equation}
	(\Id - \Glambda I^*) \Glambda
	= \Glambda - \Glambda I^* \Glambda
	= \Glambda (\Id - I^* \Glambda) . 
	\end{equation}
	implies the claim by multiplying with $(\Id - \Glambda I^*)^{-1}$ from the right hand side and $(\Id - I^* \Glambda)^{-1}$ from the left hand side.
\end{proof}

\begin{prop}\label{prop: Omega}
	The operator $F_\lambda$ for $\lambda \in \rho(\HIBC)\cap \rho(L)$ satisfies
	\begin{enumerate}[(i)]
		\item $\rg(F_\lambda) \subseteq \ker(\lambda - H_m)$; 
		\item $(B-I^*)F_\lambda = \Id_{D(T)}$. 
	\end{enumerate}
\end{prop}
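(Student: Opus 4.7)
The strategy is to derive an explicit formula for $F_\lambda\varphi$ that directly exhibits it as an element of $D(H_m)$ satisfying both claimed properties. The main technical step is a manipulation of the abstract Green identity of \autoref{lem:integration by parts-2}.

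First, for $\varphi\in D(T)=D(F_\lambda)$, I will produce a preimage $\eta_\varphi\in D(H_m)$ with $(B-I^*)\eta_\varphi=\varphi$. A natural candidate is $\eta_\varphi := \Glambda(\Id - I^*\Glambda)^{-1}\varphi$, well-defined since the non-zero parts of the spectra of $\Glambda I^*$ and $I^*\Glambda$ coincide, so \autoref{ass:prop 4.6}~(ii) implies $1\in\rho(I^*\Glambda)$. Writing $\psi:=(\Id-I^*\Glambda)^{-1}\varphi$, \autoref{ass:prop 4.6}~(iv) gives $\psi\in D(T)$; hence $\eta_\varphi=\Glambda\psi$ lies in $\Glambda D(T)\subset D(A_m)$ and simultaneously in $\rg(\Glambda)\subset\ker(\lambda-L_m)\subset D(L_m)$, so $\eta_\varphi\in D(H_m)$. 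Using $B\Glambda=\Id_{\dH}$ one checks that $(B-I^*)\eta_\varphi = (\Id - I^*\Glambda)\psi = \varphi$, and by \autoref{lem:G Gamma} this $\eta_\varphi$ equals $(\Id-\Glambda I^*)^{-1}\Glambda\varphi$.

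Next, I will apply \autoref{lem:integration by parts-2} to $\eta_\varphi$ and $\xi := R(\overline\lambda, \HIBC)g$ for arbitrary $g\in\H$. Since $\xi\in D(\HIBC)\subset D(H_m)$ satisfies $(B-I^*)\xi=0$ and $H_m\xi=\HIBC\xi=\overline\lambda\,\xi-g$, the identity reduces to
\begin{equation*}
\langle(B-I^*)\eta_\varphi,(A_m-I^*)\xi\rangle_{\dH}
= \langle H_m\eta_\varphi,\xi\rangle_{\H} - \langle\eta_\varphi,\HIBC\xi\rangle_{\H}
= \langle\eta_\varphi,g\rangle_{\H} - \langle(\lambda-H_m)\eta_\varphi,\xi\rangle_{\H}.
\end{equation*}
The self-adjointness of $\HIBC$ (from \autoref{thm:HIBC self-adjoint} with $\alpha=0,\,\beta=1$) yields $R(\overline\lambda,\HIBC)^* = R(\lambda,\HIBC)$, so the last inner product equals $\langle R(\lambda,\HIBC)(\lambda-H_m)\eta_\varphi, g\rangle_{\H}$. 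Substituting $(B-I^*)\eta_\varphi=\varphi$, taking complex conjugates, and comparing with the defining identity $\langle g,F_\lambda\varphi\rangle_\H = \langle(A_m-I^*)R(\overline\lambda,\HIBC)g,\varphi\rangle_\dH$, the arbitrariness of $g$ gives the key formula
\begin{equation*}
F_\lambda\varphi \;=\; \eta_\varphi - R(\lambda,\HIBC)(\lambda-H_m)\eta_\varphi.
\end{equation*}

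Both claims then follow readily. As $\eta_\varphi\in D(H_m)$ and $R(\lambda,\HIBC)(\lambda-H_m)\eta_\varphi\in D(\HIBC)\subset D(H_m)$, we have $F_\lambda\varphi\in D(H_m)$. Because $H_m$ coincides with $\HIBC$ on $D(\HIBC)$, a short calculation gives $(\lambda-H_m)R(\lambda,\HIBC)(\lambda-H_m)\eta_\varphi = (\lambda-H_m)\eta_\varphi$, whence $(\lambda-H_m)F_\lambda\varphi=0$, proving (i). For (ii), since $D(\HIBC)=\ker(B-I^*)\cap D(H_m)$, the second summand is annihilated by $B-I^*$, leaving $(B-I^*)F_\lambda\varphi=(B-I^*)\eta_\varphi=\varphi$. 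The principal obstacle is the Green-identity manipulation yielding the explicit formula for $F_\lambda\varphi$; once this is in hand, identifying the right $\eta_\varphi$ and verifying (i), (ii) is bookkeeping.
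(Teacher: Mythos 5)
Your proof is correct, and it takes a genuinely different route from the paper's. The paper computes $F_\lambda$ explicitly by taking the adjoint of the resolvent formula from \autoref{thm:HIBC self-adjoint}, which forces it to introduce the auxiliary operator $\Theta_\lambda=(A_m\Gamma_{\bar\lambda}R(\bar\lambda,L)\Gamma_\lambda^*)^*$, verify its mapping properties on $D(T)$, and then prove (i) by pairing $F_\lambda\varphi$ against $D(H_0)$ and invoking $H_m\subset H_0^*$ from \autoref{lem:H_m}. You instead build the preimage $\eta_\varphi=\Gamma_\lambda\Glambda\varphi=\Glambda(\Id-I^*\Glambda)^{-1}\varphi$ first (using \autoref{ass:prop 4.6}~(iv) exactly where the paper does, to get $\eta_\varphi\in D(A_m)$), and then extract the Krein-type formula $F_\lambda\varphi=\bigl(\Id-R(\lambda,\HIBC)(\lambda-H_m)\bigr)\eta_\varphi$ in a single application of the Green identity of \autoref{lem:integration by parts-2} together with $R(\bar\lambda,\HIBC)^*=R(\lambda,\HIBC)$; both (i) and (ii) then drop out, since $H_m$ agrees with $\HIBC$ on $D(\HIBC)=\ker(B-I^*)\cap D(H_m)$. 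Your identification $1\in\rho(G_\lambda I^*)\Rightarrow 1\in\rho(I^*G_\lambda)$ via the coincidence of nonzero spectra of $ST$ and $TS$ is legitimate for the bounded operators at hand, and your computation also establishes, as a byproduct, that $D(T)$ really lies in the domain of the adjoint defining $F_\lambda$. What your approach buys is brevity, a cleaner closed formula, and complete independence from $H_0$ (so no density question even lurks in the background); what the paper's approach buys is the more structured expression \eqref{eq:Omega} for $F_\lambda$ in terms of $T_\lambda$ and the intermediate identity \eqref{eq:Theta}, which exhibit how $F_\lambda$ is assembled from the objects of Section~\ref{sect:Robin}. The two derivations agree on the essential decomposition $F_\lambda\varphi=\Gamma_\lambda\Glambda\varphi+f$ with $f\in D(\HIBC)$, which is what makes (ii) work in both cases.
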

\begin{proof}
	We begin by proving that $\rg(F_\lambda) \subseteq D(H_m)$ and then check properties i), ii). From~\autoref{thm:HIBC self-adjoint} we have, denoting $\Gamma_\lambda:=\Gamma_\lambda^{0,1}=(\Id-\Glambda I^\ast)^{-1}$,
	\begin{align}
	R(\overline{\lambda}, \HIBC) 
	=& \Gamma_{\bar\lambda}R(\bar\lambda, L) ( 1 - \Gamma_\lambda^* I T_{\overline{\lambda}}  I^* 
	\Gamma_{\bar\lambda}R(\bar\lambda, L))^{-1} 
	\Gamma_\lambda^*\notag\\
	= &
	\Gamma_{\bar\lambda} R(\bar\lambda,L)\left(1 + \Gamma_\lambda^*I 
	T_{\overline{\lambda}}  I^* \Gamma_{\bar\lambda}R(\bar\lambda, L) ( 1 - \Gamma_\lambda^* I T_{\overline{\lambda}}  I^* 
	\Gamma_{\bar\lambda}R(\bar\lambda, L))^{-1} \right) 
	\Gamma_\lambda^* \notag\\
	=& \Gamma_{\bar\lambda} R(\bar\lambda,L) \Gamma_\lambda^*\left(1 +  I 
	T_{\overline{\lambda}}  I^* R(\bar \lambda, \HIBC)\right).\label{eq:res IBC self}
	\end{align}
	Denote $\Theta_\lambda = (A_m\Gamma_{\bar\lambda} R(\bar\lambda,L) \Gamma_\lambda^*)^*$. We have
	\begin{equation}
	A_m \Gamma_{\bar\lambda} R(\bar\lambda,L) = A_mR(\bar\lambda,L) + A_m G_{\bar \lambda} I^*\Gamma_{\bar\lambda} 
	R(\bar\lambda,L)
	= G_\lambda^* + T_{\bar\lambda} I^*\Gamma_{\bar\lambda} 
	R(\bar\lambda,L),
	\end{equation}
	whence
	\begin{equation}\label{eq:Theta}
 	\Theta_\lambda = \Gamma_\lambda G_\lambda  + \Gamma_\lambda R(\lambda, L)\Gamma_{\bar\lambda}^*I T_{\bar\lambda}^* .
	\end{equation}
	The first term, $\Gamma_\lambda G_\lambda = G_\lambda (1-I^*G_\lambda)^{-1}$ maps $D(T)$ to $D(H_m)$ since by~\autoref{lem:G Gamma} the operator
	$(1-I^*G_\lambda)^{-1}$ leaves $D(T)$ invariant and $G_\lambda$ maps $D(T)$ to $D(H_m)$.
	The second term acts on $D(T)$ as $\Gamma_\lambda R(\lambda, L)\Gamma_{\bar\lambda}^*I T_{\lambda}$ because $T_\lambda\subset T_{\bar\lambda}^*$.
	By~\autoref{lem:domains},
	$\Gamma_\lambda R(\lambda, L) \Gamma_{\bar\lambda}^* I$ is a bounded operator from $\dH$ to $D(\HIBC)\subset D(H_m)$, 
	so $\Theta_\lambda$ maps $D(T)$ to $D(H_m)$. Since $I^*$ maps $D(\HIBC)$ to $D(T)$ and $I^*\Gamma_\lambda G_\lambda= ((1-I^*G_\lambda)^{-1}-1)$ leaves $D(T)$ invariant, by hypothesis, we see that $I^*\Theta_\lambda$ leaves $D(T)$ invariant.
	From~\eqref{eq:res IBC self} we then see that
	\begin{equation}
 	F_\lambda = \Theta_\lambda + R(\lambda, \HIBC)
 	I T_{\lambda} I^* \Theta_\lambda- R(\lambda, \HIBC)I, \label{eq:Omega}
 	\end{equation}
	and thus $\rg(F_\lambda) \subseteq D(H_m)$. 
	
	For (i) it is now sufficient to prove that $\rg(F_\lambda) \subseteq \ker(\lambda- H_0^*)$, by~\autoref{lem:H_m}, which follows from
	\begin{align}
	\langle F_\lambda \varphi , (H_0-\bar\lambda) g \rangle_\H =  \langle  \varphi , 
	F_\lambda^*(\HIBC-\bar\lambda) g \rangle_{\dH} = -\langle  \varphi ,(A_m-I^*) g\rangle_{\dH}=0,
	\end{align}
	for all $\varphi\in \dH$, $g\in D(H_0)\subset D(\HIBC)$.
	
	To check (ii), notice that our previous analysis shows that for $\varphi\in D(T)$
	\begin{equation}
	F_\lambda \varphi = \Gamma_\lambda G_\lambda \varphi + f
	\end{equation}
	with $f\in D(\HIBC)\subset \ker(B-I^*)$. The claim thus follows from \autoref{lem:G Gamma}
	\begin{equation}
	(B-I^*)\Gamma_\lambda G_\lambda = (1-I^*G_\lambda)^{-1}- I^*G_\lambda (1-I^*G_\lambda)^{-1}=\Id_{D(T)}. \qedhere 
	\end{equation}	
\end{proof}

\begin{thm}\label{thm:IBC-QBT}
	The triple 
	\begin{equation*}
	\Big( \dH,	 (B-I^*), (A_m-I^*)\Big)
	\end{equation*}
 is a quasi boundary triple 
	for $H_m$. Furthermore, $\mathscr{G}:=\rg(A_m-I^*)\vert_{\ker(B-I^*)}$ is dense in $\dH$.
\end{thm}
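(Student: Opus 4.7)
The plan is to verify the three defining properties of a quasi boundary triple for $H_m$ (as in~\autoref{def:qbt}) with boundary maps $\Gamma_0 := B - I^*$ and $\Gamma_1 := A_m - I^*$: an abstract Green identity, self-adjointness of the restriction $H_m|_{\ker \Gamma_0}$, and dense range of the joint map $(\Gamma_0, \Gamma_1): D(H_m) \to \dH \oplus \dH$. The density of $\mathscr{G}$ is the crucial ingredient for the last property, and is flagged here because it is of independent interest.

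The Green identity is exactly~\autoref{lem:integration by parts-2}. For the self-adjointness of the restriction, I compute directly from~\eqref{eq:H_m}: if $f \in D(H_m)$ satisfies $Bf = I^*f$, then $II^*f + I(A_m - B)f = I A_m f$, so $H_m f = L_m f + I A_m f = \HIBC f$. Thus $H_m|_{\ker(B - I^*)} = \HIBC$, which is self-adjoint by~\autoref{cor:HIBC self-adjoint alpha = 0} in view of~\autoref{ass:prop 4.6}.

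For the joint range density, I proceed in two stages. Density of $\rg(\Gamma_0)$ is immediate from~\autoref{prop: Omega}(ii), which gives $D(T) \subseteq \rg(B - I^*)$, a dense subspace of $\dH$. Assuming density of $\mathscr{G}$, joint range density then follows by a standard two-step approximation: for $(\varphi_0, \varphi_1) \in \dH \oplus \dH$ and $\varepsilon > 0$, first pick $\varphi \in D(T)$ with $\|\varphi - \varphi_0\| < \varepsilon$, so $F_\lambda \varphi \in D(H_m)$ satisfies $\Gamma_0 F_\lambda \varphi = \varphi$; then pick $f_0 \in D(\HIBC) \subseteq \ker \Gamma_0$ with $\|\Gamma_1 f_0 - (\varphi_1 - \Gamma_1 F_\lambda \varphi)\| < \varepsilon$. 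The element $F_\lambda \varphi + f_0 \in D(H_m)$ matches $(\varphi_0, \varphi_1)$ up to $\varepsilon$ in both coordinates.

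The main work is the density of $\mathscr{G}$. Since $R(\bar\lambda, \HIBC)$ bijects $\H$ onto $D(\HIBC)$, I have $\mathscr{G} = \rg(K)$ where $K := (A_m - I^*) R(\bar\lambda, \HIBC) \in \mathcal{L}(\H, \dH)$ is bounded by~\autoref{lem:A rel HIBC-bounded}. Density of $\rg(K)$ is equivalent to $\ker(K^*) = \{0\}$, and by~\autoref{def:F_lambda} the Hilbert adjoint $K^*$ agrees with $F_\lambda$ on $D(T)$. The identity $(B - I^*)F_\lambda = \Id_{D(T)}$ from~\autoref{prop: Omega}(ii) already yields injectivity of $F_\lambda$ on $D(T)$. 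The main obstacle is extending this from $D(T)$ to all of $\dH$: for $\psi \in \ker(K^*)$, approximating by $\psi_n \in D(T)$ gives $F_\lambda \psi_n = K^*\psi_n \to 0$ in $\H$, while $\Gamma_0 F_\lambda \psi_n = \psi_n \to \psi$ in $\dH$. Closing this argument requires a coercivity estimate of the form $\|F_\lambda \psi\|_\H \geq c\|\psi\|_\dH$ on $D(T)$, which I expect to derive from relative boundedness of $B - I^*$ in the $H_m$-graph norm (the abstract trace estimate), used together with the explicit representation~\eqref{eq:Omega} of $F_\lambda$ and the invertibility assumptions in~\autoref{ass:prop 4.6}.
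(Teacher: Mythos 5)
Your skeleton is exactly the paper's: the Green identity is \autoref{lem:integration by parts-2}, the restriction to $\ker(B-I^*)$ is computed to be $\HIBC$ and hence self-adjoint under \autoref{ass:prop 4.6}, and joint range density is reduced to density of $\rg(B-I^*)=D(T)$ together with density of $\mathscr{G}$ by the same affine-space approximation. The divergence is confined to the last step, and there your argument has a genuine gap. First, the premise that $K=(A_m-I^*)R(\bar\lambda,\HIBC)$ is bounded is not delivered by \autoref{lem:A rel HIBC-bounded}: that lemma controls $IA_m$, not $A_m$, and $A_mR(\bar\lambda,\HIBC)$ differs from the bounded piece $A_m(\Id-G_{\bar\lambda}I^*)R(\bar\lambda,\HIBC)$ by $T_{\bar\lambda}I^*R(\bar\lambda,\HIBC)$, which involves the generally unbounded Dirichlet-to-Neumann operator (in the model of Section~\ref{sect:applications}, $T_\lambda$ is only $N^{1/2}$-bounded). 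Second, and more importantly, the coercivity estimate $\|F_\lambda\psi\|_{\H}\geq c\|\psi\|_{\dH}$ on $D(T)$ is only announced, and the source you point to --- relative boundedness of $B-I^*$ in the $H_m$-graph norm --- is neither among the paper's assumptions nor established anywhere; the decomposition $D(L_m)=D(L)\oplus\ker(\lambda-L_m)$ is not known to be topological, so such a trace estimate cannot simply be cited. (Had you such an estimate, the coercivity would indeed follow cheaply from $\rg(F_\lambda)\subseteq\ker(\lambda-H_m)$, since then $\|\psi\|=\|(B-I^*)F_\lambda\psi\|\leq C(1+|\lambda|)\|F_\lambda\psi\|$; but the estimate itself is the missing ingredient.)

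The paper sidesteps all of this. It does not claim $K$ is bounded; it uses only that $K$ is densely defined, so that $\rg(K)^\perp=\ker(K^*)$ with $K^*$ the adjoint on its domain, which by \autoref{def:F_lambda} is $F_\lambda$ with $D(F_\lambda)=D(T)$. The left-inverse identity $(B-I^*)F_\lambda=\Id_{D(T)}$ of \autoref{prop: Omega}~(ii) then gives $\ker(F_\lambda)=\{0\}$ outright, with no need to extend injectivity beyond $D(T)$ and no coercivity. So the repair is not to prove your estimate but to drop the boundedness claim and apply the orthogonality--adjoint duality at the level of the (possibly unbounded) operator $K$, exactly where \autoref{prop: Omega}~(ii) already does the work.
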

\begin{proof}
	We have already shown the abstract Green identity (\autoref{lem:integration by parts-2}) and self-adjointness of 
	$H_m \vert_{\ker(B-I^*)}=\HIBC$ (\autoref{thm:HIBC self-adjoint}), so it only remains to prove that $\rg(B-I^*, A_m-I^*)$ 
	is dense in $\dH\times \dH$.
	To see this, first note that $\rg(B-I^*)=D(T)$ is dense. We can complete the argument by showing that $\mathscr{G}$ is 
	dense, since then the affine space $\{(A_m-I^*)f: f\in D(H_m), (B-I^*)f=\varphi\}$ is also dense for all $\varphi\in 
	\rg(B-I^*)$.
	To check this, it is sufficient to note that 
	\begin{equation}
	\left(\rg(A_m-I^*)R(\bar\lambda, \HIBC)\right)^\perp = \ker(F_\lambda),
	\end{equation}
	and that $F_\lambda$ is injective since it has the left-inverse $B-I^*$, by~\autoref{prop: Omega}.
\end{proof}

We can now use the theory of quasi boundary triples to obtain criteria for self-adjointness as well as a classification of interior-boundary conditions. We will formulate these in terms of linear realtions in $\dH$, i.e. linear subspaces of $\dH \oplus \dH$. This has the advantage of being able to deal with somewhat degenerate cases (e.g. where $\alpha=0$ or $\beta=0$) without distinction. We provide the relevant notions for calculating with relations in \hyperref[sect:relations]{Appendix \ref*{sect:relations}}.

We denote the Dirichlet-to-Neumann operator with respect to $A_m - I^*$ and $B - I^*$ by
\begin{equation*}
S_\lambda \coloneqq (A_m - I^*) F_\lambda, 
\end{equation*}
where $F_\lambda$ is defined in \autoref{def:F_lambda}. By~\autoref{prop: Omega}, $S_\lambda$ is well defined on $D(S_\lambda)=D(T)$ (since $D(A_m)\subset D(H_m)$). 
Following~\cite[Prop.2.4, Thm 2.8]{BeLa2007} we have (see~\autoref{prop:localRobin} for an application):

\begin{thm}\label{thm:genIBC-res} 
 Let $\mathfrak{R}$ be a linear relation in $\dH$ and define
 \begin{align*}
  H_\mathfrak{R}&=H_m\vert_{D(H_\mathfrak{R})} \\
  D(H_\mathfrak{R})&=\Big\{f\in D(H_m) : \big((B-I^*)f,(A_m-I^*)f\big)\in \mathfrak{R}\Big\}. 
 \end{align*}
If $\mathfrak{R}$ is symmetric, then $H_\mathfrak{R}$ is symmetric.

If moreover there exists a real $\lambda$ satisfying~\autoref{ass:prop 4.6} such that the relation $ \mathfrak{R}-S_\lambda$ is one-to-one and $\rg(F_\lambda^*)\subset \rg(\mathfrak{R}-S_\lambda)$, then $H_\mathfrak{R}$ is self-adjoint, $\lambda\in \rho(H_\mathfrak{R})$ and the resolvent is given by
\begin{equation*}
R(\lambda, H_\mathfrak{R})=(\Id + F_\lambda(\mathfrak{R}-S_\lambda)^{-1}(A_m-I^*) )R(\lambda, \HIBC).
\end{equation*}
\end{thm}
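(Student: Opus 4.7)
The plan is to follow the standard strategy for quasi boundary triples, now that~\autoref{thm:IBC-QBT} provides such a triple for $H_m$. The proof naturally splits into three tasks: symmetry under the symmetry hypothesis on $\mathfrak{R}$, solvability of $(\lambda-H_{\mathfrak{R}})f=g$ via an ansatz using $F_\lambda$, and reading off the resolvent formula.

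\textbf{Symmetry.} For $f,g\in D(H_{\mathfrak{R}})$ both pairs $\big((B-I^*)f,(A_m-I^*)f\big)$ and $\big((B-I^*)g,(A_m-I^*)g\big)$ lie in $\mathfrak{R}$. Symmetry of the relation $\mathfrak{R}$ (as defined in the appendix on relations) then translates exactly into
\begin{equation*}
\langle (B-I^*)f,(A_m-I^*)g\rangle_{\dH} = \langle (A_m-I^*)f,(B-I^*)g\rangle_{\dH},
\end{equation*}
and inserting this into the abstract Green identity of~\autoref{lem:integration by parts-2} yields $\langle H_m f,g\rangle_{\H}=\langle f,H_m g\rangle_{\H}$, proving symmetry of $H_{\mathfrak{R}}$.

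\textbf{Solving the resolvent equation.} Fix the real $\lambda$ from the hypothesis, so $\lambda\in\rho(\HIBC)$ and $\HIBC$ is self-adjoint. For $g\in\H$ I propose the ansatz
\begin{equation*}
f = R(\lambda,\HIBC)g + F_\lambda\varphi, \qquad \varphi\in D(T)=D(F_\lambda).
\end{equation*}
Since $\rg(F_\lambda)\subseteq\ker(\lambda-H_m)$ by~\autoref{prop: Omega}(i), the action yields $(\lambda-H_m)f=g$ automatically. The boundary data read off as
\begin{equation*}
(B-I^*)f = \varphi, \qquad (A_m-I^*)f = (A_m-I^*)R(\lambda,\HIBC)g + S_\lambda\varphi,
\end{equation*}
using $(B-I^*)R(\lambda,\HIBC)=0$, $(B-I^*)F_\lambda=\Id_{D(T)}$ from~\autoref{prop: Omega}(ii), and the definition of $S_\lambda$. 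Thus $f\in D(H_{\mathfrak{R}})$ precisely when
\begin{equation*}
\big(\varphi,\,(A_m-I^*)R(\lambda,\HIBC)g\big)\in \mathfrak{R}-S_\lambda.
\end{equation*}
Since $\lambda\in\R$ and $\HIBC$ is self-adjoint, $F_\lambda^*\supset (A_m-I^*)R(\lambda,\HIBC)$, so $(A_m-I^*)R(\lambda,\HIBC)g\in\rg(F_\lambda^*)\subseteq\rg(\mathfrak{R}-S_\lambda)$ by hypothesis. Injectivity of $\mathfrak{R}-S_\lambda$ then produces a unique $\varphi=(\mathfrak{R}-S_\lambda)^{-1}(A_m-I^*)R(\lambda,\HIBC)g$. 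Hence $\lambda-H_{\mathfrak{R}}$ is onto, while injectivity is inherited from the decomposition $f=R(\lambda,\HIBC)g_0+F_\lambda\varphi$ combined with the direct sum structure $D(H_m)\supseteq D(\HIBC)\oplus \rg(F_\lambda)$ (which follows from $(B-I^*)F_\lambda=\Id$).

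\textbf{Self-adjointness and resolvent.} The symmetric operator $H_{\mathfrak{R}}$ satisfies $\rg(\lambda-H_{\mathfrak{R}})=\H$ for the real $\lambda$, which forces $\lambda\in\rho(H_{\mathfrak{R}})$ and yields self-adjointness by the standard criterion for symmetric operators. Assembling the ansatz gives directly
\begin{equation*}
R(\lambda,H_{\mathfrak{R}})g = \big(\Id + F_\lambda(\mathfrak{R}-S_\lambda)^{-1}(A_m-I^*)\big)R(\lambda,\HIBC)g
\end{equation*}
as claimed. The main subtlety I anticipate is bookkeeping with relations, in particular checking that $\mathfrak{R}-S_\lambda$ is handled as a relation (and not forced to be single-valued), so that all manipulations remain valid in the possibly degenerate cases where $S_\lambda$ is only densely defined on $D(T)$ and $\mathfrak{R}$ need not be the graph of an operator.
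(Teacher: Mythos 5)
Your proposal is correct and follows essentially the same route as the paper: symmetry via the Green identity of \autoref{lem:integration by parts-2} together with the definition of the adjoint relation, then the decomposition $f=R(\lambda,\HIBC)g+F_\lambda\varphi$ using \autoref{prop: Omega} to reduce $(\lambda-H_{\mathfrak{R}})f=g$ to the inclusion $(\varphi,F_\lambda^*g)\in\mathfrak{R}-S_\lambda$, and finally self-adjointness from the real $\lambda$ lying in the resolvent set of a symmetric operator. The only cosmetic difference is that the paper decomposes an arbitrary $f\in D(H_m)$ as $f_0+F_\lambda\varphi$ and solves the resulting system, whereas you phrase it as an ansatz plus a separate injectivity remark; the content is identical.
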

\begin{proof}
For $(f,g)\in \mathfrak{R}\subset \dH\times \dH$ and $(f^*, g^*)\in \mathfrak{R}^*$, then by definition of the adjoint relation~\eqref{eq:adjoint},
\begin{equation*}
 \langle f, g^* \rangle_{\dH} - \langle g, f^* \rangle_{\dH}=0.
\end{equation*}
Hence if $\mathfrak{R}\subset \mathfrak{R}^*$ the operator $H_\mathfrak{R}$ is symmetric by~\autoref{lem:integration by parts-2}.

By~\autoref{prop: Omega} we can write any $f\in D(H_m)$ uniquely as $f=f_0 + F_\lambda \varphi$ with $f_0\in D(\HIBC)=\ker(B-I^*)$, $\varphi\in D(T)$. As in~\autoref{lem:resLab}, solving $(\lambda-H_\mathfrak{R}) f=g$ then amounts to solving
\begin{align*}
 (\lambda - \HIBC) f_0 &\stackrel{!}{=} g\\
 \Big(\varphi, (A_m-I^*)f_0 + S_\lambda \varphi\Big) &\stackrel{!}{\in} \mathfrak{R}.
\end{align*}
The first equation and $\lambda\in \rho(\HIBC)$ imply
\begin{equation*}
 f_0=R(\lambda, \HIBC)g.
\end{equation*}
With $(A_m-I^*)f_0=F_\lambda^*g$, the inclusion is satisfied if and only if 
\begin{equation*}
 (\varphi, F_\lambda^*g) \in \mathfrak{R} - S_\lambda.
\end{equation*}
Since $\rg(F_\lambda^*)\subset \rg(\mathfrak{R} - S_\lambda)$, such a $\varphi$ exists and since $\mathfrak{R} - S_\lambda$ is one-to-one it is unique. Thus for every $g\in \dH$ we can uniquely solve for $f_0$ and $\varphi$, so $\lambda \in \rho(H_\mathfrak{R})$. Since 
\begin{equation*}
(\varphi, F_\lambda^*g)=(\varphi, (A_m-I^*)R(\lambda, \HIBC)g)\in  \mathfrak{R} - S_\lambda,
\end{equation*}
we have
\begin{equation*}
 ((A_m-I^*)R(\lambda, \HIBC)g, \varphi)\in ( \mathfrak{R} - S_\lambda)^{-1},
\end{equation*}
which, since $\varphi$ is unique, we write as $\varphi=(\mathfrak{R} - S_\lambda)^{-1}(A_m-I^*)R(\lambda, \HIBC)g$ and obtain the resolvent formula. Since $H_\mathfrak{R}$ is symmetric and its resolvent set contains the real number $\lambda$, $H_\mathfrak{R}$ is self-adjoint.
\end{proof}

In order to obtain a classification of the self-adjoint restrictions of $H_m$, which are extensions of $H_0$, we need the additional hypothesis that $D(H_0)$ is dense. To formulate this result, define the non-negative, bounded operator 
\begin{equation}
M:=(F_i^* F_i)^{1/2}
\end{equation}
We extend the operator $M^{-1}$ to $\dH$ as explained in~\hyperref[appendix]{Appendix \ref*{appendix}} and denote this extension by $M_-^{-1}$.
Applying the results of~\cite[Sect. 3]{BeMi14} to the boundary triple $(\dH,B - I^*,A_m - I^*)$ yields the following result (see \autoref{thm:appendix} for the relevant statement and a summary).

\begin{thm}\label{thm:BiMi14}
Assume that $D(H_0)$ is dense and that there exists $\lambda\in \R$ satisfying~\autoref{ass:prop 4.6}.  Let $\mathfrak{R}$ be a relation in $\dH$ and define $H_\mathfrak{R}$ as in \autoref{thm:genIBC-res}.

		Then $H_\mathfrak{R}$ is self-adjoint if and only if the relation
		\begin{equation}
		M^{-1}(\mathfrak{R}-S_\lambda)M_{-}^{-1}
		\end{equation}
		is self-adjoint and satisfies $D(\mathfrak{R})\subset M_{-}D(S)$.
\end{thm}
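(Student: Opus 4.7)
The plan is to recognize this as a direct application of the general classification result from the theory of quasi boundary triples, summarized as Theorem~\ref{thm:appendix} in the appendix, to the quasi boundary triple $(\dH, B-I^*, A_m-I^*)$ constructed in Theorem~\ref{thm:IBC-QBT}. The role of the density hypothesis on $D(H_0)$ is to ensure that $H_0$ is a densely defined symmetric operator (a consequence of Lemma~\ref{lem:H_m}), so that $H_0^*$ is a genuine operator and the abstract extension theory applies in its standard form rather than in the language of linear relations.

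First I would check that all the ingredients in Theorem~\ref{thm:appendix} match those supplied by our construction. By Theorem~\ref{thm:IBC-QBT} we already have that $(\dH, B-I^*, A_m-I^*)$ is a quasi boundary triple for $H_m$, with
\begin{equation*}
\ker(B-I^*) = D(\HIBC), \qquad \HIBC = H_m|_{\ker(B-I^*)} \text{ self-adjoint,}
\end{equation*}
and with $\mathscr{G} = \rg(A_m-I^*)|_{\ker(B-I^*)}$ dense in $\dH$. The associated $\gamma$-field and Weyl function are precisely $F_\lambda$ and $S_\lambda$, by Definition~\ref{def:F_lambda} and the definition of $S_\lambda$. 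Under the density of $D(H_0)$, Lemma~\ref{lem:H_m} gives $H_m \subset H_0^*$, so that $H_m$ plays the role of a (possibly proper) restriction of the adjoint of the symmetric operator whose self-adjoint extensions we wish to classify.

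The main step is then to invoke Theorem~\ref{thm:appendix} verbatim: it provides a bijective correspondence between self-adjoint restrictions of $H_m$ determined by a boundary relation $((B-I^*)f,(A_m-I^*)f) \in \mathfrak{R}$ and self-adjoint relations of the form $M^{-1}(\mathfrak{R}-S_\lambda)M_{-}^{-1}$ with the compatibility $D(\mathfrak{R}) \subset M_-D(S)$. The norming operator $M = (F_i^*F_i)^{1/2}$ and its extension $M_-^{-1}$ to $\dH$ appear precisely because we are in the quasi boundary triple setting, where the ranges of the boundary maps are only dense, forcing a rescaling to a larger auxiliary space in order to recover a genuine self-adjointness criterion at the level of relations in $\dH$.

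The expected obstacle is not a conceptual one but rather the book-keeping required to align our $F_\lambda$, $S_\lambda$, and the extension $M_-^{-1}$ with the abstract notation of~\cite{BeMi14}. In particular, one must verify that $\rho(\HIBC)\cap \R \ne \emptyset$ (supplied by Assumption~\ref{ass:prop 4.6}) so that $\lambda$ may be chosen real as required, and that the domain condition $D(\mathfrak{R}) \subset M_- D(S)$ in our formulation matches the compatibility condition used in the abstract statement. Once these identifications are made, the theorem follows without further work from Theorem~\ref{thm:appendix}.
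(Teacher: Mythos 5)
Your proposal is correct and matches the paper's own argument: the paper likewise gives no independent proof, but obtains the statement by applying \autoref{thm:appendix} (i.e., \cite[Cor.~3.5]{BeMi14}) to the quasi boundary triple $(\dH, B-I^*, A_m-I^*)$ from \autoref{thm:IBC-QBT}, with $F_\lambda$ and $S_\lambda$ as the $\gamma$-field and Weyl function, the density of $\mathscr{G}$ and of $D(H_0)$ supplying the hypotheses needed for the construction of $M$ and for the extension theory, and the real $\lambda$ furnished by \autoref{ass:prop 4.6}. Your identifications and caveats are exactly the book-keeping the paper leaves implicit.
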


This is a complete classification, since for any self-adjoint operator $H$ with  $H_0\subset H \subset H_m$ there is a relation $\mathfrak{R}$ such 
that $H=H_\mathfrak{R}$, which is simply given by 
\begin{equation}
\mathfrak{R}=\{ \left((B-I^*)f, (A-I^*)f\right) \vert f\in D(H)\}.
\end{equation}
In the Moshinsky-Yafaev model (\autoref{exa:yaf}), $\dH=\C$ and one easily checks that $H_0$ is densely defined with deficiency indices $(1,1)$. The operators $\HIBCab$ (with $\alpha+\beta=1=\gamma+\delta$) are all self-adjoint extensions of $H_0$. In~\cite{yafaev1992}, these are discussed as extensions of the (not densely defined!) restriction of $L$ (and $H_0$) with boundary conditions $I^*f=0=Bf$. Our result clarifies their relation to the usual extension theory for symmetric operators.

\section{Applications}
\label{sect:applications}

\subsection{A toy quantum-field theory}

Here we illustrate our results in a simple model that displays much of the structure relevant for applications in quantum field theory, without posing too many technical problems for the verification of key assumptions.

The physical picture behind this example is that of a particle, whose position we denote by $x\in \R$, moving in a one-dimensional space while creating/annihilating ``particles''. The latter can be thought of as elementary excitations of the background medium through which the first particle moves. We denote their positions by $y_1, y_2, \dots$.
Such models play an important role in condensed matter physics. In the specific case we will consider, the excitations would not move on their own, although they will display effective dynamics through repeated creation/annihilation at different positions. This is analogous to the well known Fröhlich polaron model~\cite{GrWu16} to which the arguments of this section should apply with minor modifications.
Another, similar model with contact interactions in a three-dimensional space, which leads to some subtle regularity issues, was treated in~\cite{La21polaron}.

Take 
\begin{equation}
\H = \bigoplus_{n=0}^\infty L^2(\R)\otimes L^2_{\mathrm{sym}}(\R^n) = \bigoplus_{n=0}^\infty L^2\left(\R, 
L^2_{\mathrm{sym}}(\R^n)\right):= \bigoplus_{n=0}^\infty \H^{(n)}
\end{equation}
and $\dH=\H$ with $I=\Id$. Let $N$ be the operator given by $(N f)^{(n)}=nf^{(n)}$ (where $f^{(n)}$ is the projection of 
$f\in \H$ to $\H^{(n)}$), with the domain
\begin{equation}
D(N)=\{ f\in \H: \|n f^{(n)}\|_{\H^{(n)}} \in \ell^2(\N) \}.
\end{equation}
Clearly $N, D(N)$ is self-adjoint.
Let $x$ denote the first of the $n+1$ arguments of a function $f\in  \H^{(n)}$, then $-\Delta_x$ is a 
self-adjoint operator on the domain
\begin{equation}
D(-\Delta_x)= \bigoplus_{n=0}^\infty H^2\left(\R, 
L^2_{\mathrm{sym}}(\R^n)\right).
\end{equation}
We set $L=-\Delta_x +N$ with $D(L)=D(-\Delta_x)\cap D(N)$. 

We define $A:D(L)\cap \H^{(n)}\to \dH^{(n)}:=\H^{(n-1)}$ as a symmetrised evaluation operator ($A$ corresponds to the 
``annihilation operator'' $a(x)$):
\begin{equation}
\left(Af^{(n)}\right)(x,y_1, \dots, y_{n-1})=\frac{1}{\sqrt{n}} \sum_{j=1}^n f^{(n)}(x,y_1, \dots, y_{n})\vert_{y_j=x} 
= \sqrt n f^{(n)}(x,y_1, \dots, y_{n-1},x).
\end{equation}
One can check that $A$ maps $D(A):=D(L)$ to $\H$ and that $\ker A$ is dense. The operator $G_\lambda$ for $\lambda \in 
\C\setminus\R_+$ is then given by (denoting $Y=(y_1, \dots, y_{n+1})$ and $\hat Y_j$ as $Y$ without the entry $y_j$) 
\begin{align}
\left(G_\lambda f^{(n)}\right)(x,Y)
&=\frac{1}{\sqrt{n+1}} \sum_{j=1}^{n+1} (\lambda - L)^{-1} \delta(x-y_j)f^{(n)}(x, \hat Y_j)\\
&=\frac{1}{\sqrt{n+1}} \sum_{j=1}^{n+1} g_{n+1-\lambda}(x-y_j) f^{(n)}(y_j, \hat Y_j),\notag
\end{align}
with the function
\begin{equation}
g_\mu(x)= -(\mu-\Delta)^{-1}\delta = - \frac{e^{-\sqrt \mu |x|}}{2\sqrt{\mu}},
\end{equation}
where the square root is the branch with $\mathrm{Re}(\sqrt z)\geq 0$.
The operator $L_m$ is now defined on $D(L_m)=D(L)\oplus G_\lambda (\dH)$ by
\begin{equation}
L_m f = L_0^*f = L_0^*(f_0+G_\lambda \varphi) = Lf_0 + \lambda G_\lambda \varphi.
\end{equation}
The boundary operator $B$ is defined as the left inverse to $G_\lambda$ on $D(B)=D(L_m)$. 
In view of the fact that $H^2(\R)\subset C^1(\R)$ and $\lim_{r\to 0}(g_\mu'(r)-g_\mu'(-r))=1$, 
$B$ is given by the following local formula
\begin{equation}\label{eq:polaron B}
B f^{(n)}(x,Y)= \sqrt{n} \lim_{r\to 0} \left( \left(\partial_x f^{(n)}\right)(x+r,Y, x) -  \left(\partial_x 
f^{(n)}\right)(x-r,Y, x)\right),
\end{equation}
where the limit is taken in $\H^{(n-1)}$.

Since $g_\mu$ is continuous we can extend $A$ to $\rg G_\lambda$ canonically by using the same formula. This gives
\begin{align}
\left(T_\lambda f^{(n)}\right)(x,Y) &= \left(A_m G_\lambda f^{(n)}\right)(x,Y)  \\
&= - \frac{f^{(n)}(x,Y)}{2\sqrt{n+1-\lambda}} + \sum_{j=1}^n g_{n+1-\lambda}(x-y_j)f^{(n)}(y_j, \hat Y_j,x).\notag
\end{align}
Since $g_\mu$ is bounded, $T_\lambda:\H^{(n)} \to \H^{(n)}$ is a bounded operator. However, since the number of terms in 
the sum above is $n$, this does not give rise to a bounded operator on $\H$. 
We have the bound
\begin{equation}\label{eq:polaron T-est}
\|T_\lambda f^{(n)}\|_{\H^{(n)}} \leq \frac{n+1}{2|\sqrt{n+1-\lambda}|}  \|f^{(n)}\|_{\H^{(n)}},
\end{equation}
so on $D(T)=D(N^{1/2})$ we can define $T$ as an unbounded operator on $\H$.
This defines $A_m$ with domain $D(A_m)=D(L)\oplus G_\lambda (D(N^{1/2}))$.

\subsection{Self-adjointness of Robin-type operators}
\

The objects constructed above satisfy the hypothesis of our general setting, as explained in
\hyperref[construction]{Construction~\ref*{construction}}.
The operators with Robin type interior-boundary conditions $\HIBCab$ are thus well defined. The equation $\HIBCab f=g$ (with the choice $\delta=0$, $\gamma=\bar\beta^{-1}$, which is symmetric if $\alpha\bar\beta \in \R$) corresponds to the following hierarchy of boundary value problems
\begin{equation}
\left\{
\begin{aligned}
(-\Delta_x + n) f^{(n)}(x,Y) + \bar\beta^{-1}\sqrt{n+1} f^{(n+1)}(x, Y,x) & = g^{(n)}(x,Y)
\qquad &x\neq y_j\\
\alpha \sqrt{n} f^{(n)}(x, Y)+ \beta \sqrt{n} \left((\partial_xf^{(n)})_+ - (\partial_xf^{(n)})_-\right)(x, Y)& = f^{(n-1)}(x, \hat Y_n) \qquad &x=y_{n}
\end{aligned}\right.
\end{equation}
where the subscript $\pm$ indicates that the right/left sided limit $x\to y_n$ is taken, as in~\eqref{eq:polaron B}, and  $f^{(n)}$ is symmetric under permutation of $y_1, \dots, y_n$ which gives boundary conditions on the sets where $y_j=x$.

In order to establish self-adjointness of $\HIBCab$ we need some properties of $T_\lambda$. We remark that non-positivity of $T_\lambda$ is not generic in any way -- in~\autoref{exa:yaf} the operator is non-negative instead, while in the more involved cases studied in~\cite{thomas1984, LaSch19, La20} both the positive and negative parts are generally unbounded.

\begin{lem}\label{lem:polaron T}
	For any real $\lambda<0$, the operator $T_\lambda$ is essentially self-adjoint and non-positive. Moreover, $\rg (A)\subset \rg(z-T_\lambda)$ for all $z\in \C\setminus \R_-$. 
\end{lem}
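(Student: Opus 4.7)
The strategy is to exploit the direct-sum structure $\H=\bigoplus_n\H^{(n)}$, on which $T_\lambda$ decomposes sectorwise as $\bigoplus_n T_\lambda^{(n)}$ and $N$ acts as multiplication by $n$. Each $T_\lambda^{(n)}$ is bounded on $\H^{(n)}$ by \eqref{eq:polaron T-est}. A change of variables $(x,y_j)\leftrightarrow(y_j,x)$ in the defining integral, combined with the fact that $g_\mu$ is real and even, shows $T_\lambda^{(n)}$ is symmetric and hence self-adjoint on $\H^{(n)}$.

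The core of the argument is the identity
\begin{equation*}
\langle g, T_\lambda^{(n)} g\rangle_{\H^{(n)}} = -\bigl\|(L-\lambda)^{-1/2}A^{*} g\bigr\|_{\H^{(n+1)}}^{2},\qquad \lambda<0,
\end{equation*}
which formally reflects the factorisation $T_\lambda=AR(\lambda,L)A^{*}=-A(L-\lambda)^{-1}A^{*}$. Because $A^{*}g$ exists only as a distribution, I would verify the identity at the level of Fourier kernels. A direct computation gives
\begin{equation*}
\widetilde{A^{*}g}(p_0,\dots,p_{n+1}) = \tfrac{1}{\sqrt{n+1}}\sum_{j=1}^{n+1}\tilde g(p_0+p_j,p_1,\dots,\hat p_j,\dots,p_{n+1}),
\end{equation*}
so the right-hand side is a manifestly non-negative integral with positive kernel $(p_0^2+n+1-\lambda)^{-1}$. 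The diagonal $(j=k)$ terms in the expansion of $|\sum_j\tilde g(\cdots)|^2$ collapse, using $\int dp_0/(p_0^2+\mu)=\pi/\sqrt{\mu}$, to exactly $\|g\|^2/(2\sqrt{n+1-\lambda})$, matching the diagonal part of $\langle g,T_\lambda g\rangle$. For the off-diagonal terms, total symmetry of $\tilde g$ in its last $n$ arguments reduces the double sum over $(n+1)n$ ordered pairs to $n$ copies of a single model integral; the change of variables $(p_0,p_j)\mapsto(p_0+p_j,p_j)$ then identifies this model integral with the Fourier representation of $n\int\bar g(x,Y)\,g_\mu(y_n-x)\,g(y_n,\hat Y_n,x)\,dx\,dY$, the sign being tracked via $g_\mu=-(\mu-\Delta)^{-1}\delta$. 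Matching both pieces proves the identity and therefore $T_\lambda^{(n)}\le 0$.

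With each $T_\lambda^{(n)}$ bounded, self-adjoint and non-positive, the direct sum $\hat T:=\bigoplus_n T_\lambda^{(n)}$ on its natural domain is self-adjoint. The bound $\|T_\lambda^{(n)}\|\le\sqrt{n+1}/2$ (valid since $\lambda<0$) gives $D(T)=D(N^{1/2})\subset D(\hat T)$, and finite-sector truncations $f_k=\sum_{n\le k}f^{(n)}$ of any $f\in D(\hat T)$ lie in $D(T)$ and converge to $f$ in the graph norm because $\sum_{n>k}\|T_\lambda^{(n)}f^{(n)}\|^2\to 0$. Hence $D(T)$ is a core and $T_\lambda$ is essentially self-adjoint with closure $\hat T$, which is non-positive. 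Since $\sigma(\hat T)\subset(-\infty,0]$, for any $z\in\C\setminus\R_-$ the resolvent $(z-\hat T)^{-1}$ is bounded on $\H$ with $\rg(z-\hat T)=\H\supset\rg(A)$, which proves the range claim on identifying $T_\lambda$ with its essentially self-adjoint closure.

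The main obstacle is the non-positivity identity: although the formal manipulation $T_\lambda=-A(L-\lambda)^{-1}A^{*}$ is transparent, the rigorous verification requires the explicit Fourier computation above, and matching the off-diagonal contributions demands careful bookkeeping of permutation symmetries, Jacobians and the sign convention in $g_\mu$.
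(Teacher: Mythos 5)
Your arguments for symmetry, non-positivity and essential self-adjointness are sound, and your Fourier-kernel verification of $\langle g, T_\lambda^{(n)}g\rangle\le 0$ is a workable (if heavier) substitute for the paper's cleaner route: the paper extends $A$ by Sobolev embedding to an operator $\tilde A$ on $D(L^{1/2})$ and writes $T_\lambda\vert_{\H^{(n)}}=-\bigl(\tilde A(L-\lambda)^{-1/2}\bigr)\bigl(\tilde A(L-\lambda)^{-1/2}\bigr)^*$, which gives symmetry and non-positivity in one line. Your core argument via finite-sector truncations matches the paper's in substance.

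However, your last step contains a genuine gap. The claim $\rg(A)\subset\rg(z-T_\lambda)$ concerns the \emph{unclosed} operator $T_\lambda$ with domain $D(T)=D(N^{1/2})$, so $\rg(z-T_\lambda)=(z-T_\lambda)D(N^{1/2})$, which is in general strictly smaller than $\rg(z-\overline{T}_\lambda)=\H$. You cannot "identify $T_\lambda$ with its closure": the lemma only asserts essential self-adjointness, and the range condition is needed precisely for the non-closed operator in \autoref{lem:resLab}, where the solution $\varphi=-\alpha(\alpha T_\lambda+\beta)^{-1}Af_0$ must lie in $D(T_\lambda)$ so that $G_\lambda\varphi\in D(A_m)$ and hence $f\in D(L_{\alpha,\beta})$. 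What is missing is the argument that $R(z,\overline{T}_\lambda)$ maps $\rg(A)$ back into $D(N^{1/2})$. The paper supplies this as follows: the factorisation shows that $R(z,\overline{T}_\lambda)\tilde A(L-\lambda)^{-1/2}$ is bounded (multiplying by its adjoint gives $-R(z,\overline{T}_\lambda)T_\lambda R(\bar z,\overline{T}_\lambda)$, which is bounded), and since $A$ shifts the particle number by one and $R(z,\overline{T}_\lambda)$ commutes with $N$, one gets boundedness of $N^{1/2}R(z,\overline{T}_\lambda)A(L-\lambda)^{-1}=R(z,\overline{T}_\lambda)A(L-\lambda)^{-1}(N+1)^{1/2}$; hence $(z-\overline{T}_\lambda)^{-1}\rg(A)\subset D(N^{1/2})=D(T)$ and therefore $\rg(A)\subset\rg(z-T_\lambda)$. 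You would need to add this (or an equivalent) regularity step to complete the proof.
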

\begin{proof}
	By the Sobolev embedding theorem $A$ has a natural extension to $D(L^{1/2})\cap \H^{(n)}\supset C(\R, L^2_{\mathrm{sym}}(\R^n))$, which we denote by $\tilde A$.
	For $\lambda< 0$, $L-\lambda$ is a positive operator, and we can then write
	\begin{equation}
	T_\lambda \vert_{\H^{(n)}}= A_m (A R(\lambda, L))^* 
	= - \left(\tilde A 
	(L-\lambda)^{-1/2}\right)\left(\tilde A(L-\lambda)^{-1/2}\right)^*,
	\end{equation}
	so $T_\lambda$ is symmetric and non-positive.
	
	Since $\H^{(n)}$ is $T_\lambda$-invariant,  $T_\lambda$ is an infinite direct sum of commuting bounded self-adjoint operators and thus essentially self-adjoint, since all vectors $f\in \H$ with only finitely many $f^{(n)} \neq 0$ are contained in $\rg(T_\lambda \pm i)$.
	
	Now let $f\in \rg(A)$, i.~e.~$f=A(L-\lambda)^{-1}g$ for some $g\in \H$, and $z\in \C\setminus\R_- \subset \rho(\overline{T}_\lambda)$.
	By the formula for $T_\lambda$, the operator 
	\begin{equation}
	R(z, \overline{T}_\lambda) \tilde A(L-\lambda)^{-1/2}
	\end{equation}
	is bounded, since multiplying by its adjoint from the right yields
	\begin{equation}
	- R(z, \overline{T}_\lambda)T_\lambda R(\bar z, \overline{T}_\lambda).
	\end{equation}
	Consequently  
	\begin{equation}
	N^{1/2}R(z, \overline{T}_\lambda) A(L-\lambda)^{-1}=R(z, \overline{T}_\lambda) A(L-\lambda)^{-1}(N+1)^{1/2}
	\end{equation}
	is also a bounded operator, and this shows that $(z-\overline{T}_\lambda)^{-1}\rg(A) \subset D(N^{1/2})=D(T)$ and thus $\rg(A)\subset \rg(z-T_\lambda)$.
\end{proof}

In particular this lemma shows that $T_{\bar\lambda}\subset T_\lambda^*$ for all $\lambda \in \C\setminus \R_+$, as assumed from~\hyperref[sect:Robin]{Section \ref*{sect:Robin}} on. 

In view of~\autoref{lem:resLab}, non-positivity of $T_\lambda$ together with $\rg(A)\subset \rg(z-T_\lambda)$ implies that $L_{\alpha, \beta}$ is self-adjoint with $\sigma(L_{\alpha, \beta})\subset [0, \infty)$ if ${\alpha}\bar\beta<0$ (and of course for $\alpha=0$, $\beta \neq 0$).
These operators correspond to repulsive contact interactions between the first particle and the remaining ones.

In order to make conclusions on $\HIBCab$, we need to verify the relevant hypothesis of~\autoref{lem:domains},~\autoref{lem:A rel HIBC-bounded}.
The fact that $1\in \rho(G_\lambda I^*)\cap \rho(I^*G_\lambda )$ is guaranteed by the following Lemma.

\begin{lem}\label{lem:polaron G}
	For $\lambda\in \C\setminus \R_+$ the operator $G_\lambda$ satisfies the bound 
	\begin{equation*}
	\| G_\lambda f^{(n)}\|_{\H^{(n+1)}} \leq 
	\frac{\sqrt{n+1}}{2|n+1-\lambda|^{1/2} \mathrm{Re}(\sqrt{n+1-\lambda})^{1/2}}\|f^{(n)}\|_{\H^{(n)}}
	\end{equation*}
	In particular for $\lambda<0$ we have $\|G_\lambda\|< \tfrac{1}{2}$.
\end{lem}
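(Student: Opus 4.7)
The plan is to estimate $G_\lambda$ block by block, taking advantage of the fact that $G_\lambda$ maps each summand $\H^{(n)}$ into a distinct summand $\H^{(n+1)}$, so that its operator norm on $\H$ equals the supremum over $n$ of the block norms $\|G_\lambda\|_{\H^{(n)}\to\H^{(n+1)}}$. For a fixed sector, the strategy is to apply Minkowski's inequality in the ambient space $L^2(\R^{n+2})$, into which $\H^{(n+1)}$ embeds isometrically, to reduce the problem to estimating each of the $n+1$ terms in the defining sum:
\begin{equation*}
\|G_\lambda f^{(n)}\|_{\H^{(n+1)}} \leq \frac{1}{\sqrt{n+1}}\sum_{j=1}^{n+1}\|g_{n+1-\lambda}(x-y_j)\,f^{(n)}(y_j,\hat Y_j)\|_{L^2(\R^{n+2})}.
\end{equation*}

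Each term factorises under Fubini: integrating over $x$ first yields a factor $\|g_{n+1-\lambda}\|_{L^2(\R)}^2$, while a change of variables in the remaining integral (relabelling $y_j$ as the first coordinate of $f^{(n)}$) identifies the rest with $\|f^{(n)}\|_{\H^{(n)}}^2$. All $n+1$ terms contribute equally, so the sum collapses to $(n+1)\|g_{n+1-\lambda}\|_{L^2(\R)}\|f^{(n)}\|_{\H^{(n)}}$. The last ingredient is the elementary identity
\begin{equation*}
\|g_\mu\|_{L^2(\R)}^2=\frac{1}{4|\mu|}\int_\R e^{-2\mathrm{Re}(\sqrt\mu)|x|}\,dx=\frac{1}{4|\mu|\,\mathrm{Re}(\sqrt\mu)},
\end{equation*}
which is finite since $\mathrm{Re}(\sqrt\mu)>0$ for $\mu\in\C\setminus\R_-$, in particular for $\mu=n+1-\lambda$ with $\lambda\in\C\setminus\R_+$. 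Substituting and simplifying gives the main estimate.

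For the concluding statement, when $\lambda<0$ is real the quantity $\mu=n+1-\lambda$ is positive, so $|\mu|^{1/2}\mathrm{Re}(\sqrt\mu)^{1/2}=\mu^{3/4}$ and the block bound simplifies to $(n+1)^{1/2}/(2(n+1-\lambda)^{3/4})$. Since $-\lambda>0$ strictly, this is strictly smaller than $(n+1)^{1/2}/(2(n+1)^{3/4})=1/(2(n+1)^{1/4})\leq 1/2$ for every $n\geq 0$. As the block bound tends to $0$ as $n\to\infty$, the supremum over $n$ is attained at a finite $n$, and is therefore also strictly less than $1/2$; this yields $\|G_\lambda\|_{\H\to\H}<1/2$.

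The computation is essentially routine, and no serious obstacle is anticipated. The only mild subtlety lies in applying Minkowski's inequality to the individual, non-symmetric summands, which is legitimate because the full sum does lie in $\H^{(n+1)}$ and $\H^{(n+1)}$ is isometrically contained in $L^2(\R^{n+2})$, where each individual term is well defined and can be estimated.
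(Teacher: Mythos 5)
Your proof is correct and follows essentially the same route as the paper: the paper's one-line argument is exactly the triangle inequality applied to the $n+1$ summands in the defining formula for $G_\lambda$, combined with the identity $\|g_\mu\|_{L^2}=\tfrac{1}{2}\big(|\mu|\,\mathrm{Re}(\sqrt{\mu})\big)^{-1/2}$. Your additional care about estimating the non-symmetric summands in the ambient $L^2(\R^{n+2})$ and about why the supremum of the block norms stays strictly below $\tfrac12$ is welcome but does not change the argument.
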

\begin{proof}
	Since $\|g_\mu \|_{L^2}=\frac{1}{2 \sqrt{|\mu|\mathrm{Re}(\sqrt{\mu})}}$ this follows from the triangle inequality.
\end{proof}

In view of~\autoref{cor:HIBC s-a alpha n= 0} this yields the following, with boundedness from below being a consequence of the non-negativity of $L_{\alpha, \beta}$ and the use of Kato-Rellich in the proof. 

\begin{prop}\label{prop.5.3}
	Let $\alpha\bar\beta<0$. For all $\delta, \gamma$ such that the symmetry condition of~\autoref{lem:symmetry} is satisfied, $\HIBCab$ is self-adjoint and bounded from below.
\end{prop}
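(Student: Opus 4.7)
The plan is to apply \autoref{cor:HIBC s-a alpha n= 0} to obtain self-adjointness, and then to read off semiboundedness from the representation used in the proof of \autoref{thm:HIBC self-adjoint}. Throughout I fix a real $\lambda<0$; since $L=-\Delta_x+N\geq 0$, any such $\lambda$ lies in $\rho(L)\cap\R$.

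To verify the two hypotheses of \autoref{cor:HIBC s-a alpha n= 0}, first observe that $\alpha\bar\beta<0$ forces $\beta/\alpha=\alpha\bar\beta/|\alpha|^2$ to be real and strictly negative, so $-\beta/\alpha>0$. By \autoref{lem:polaron T}, $T_\lambda$ is essentially self-adjoint and non-positive, hence the spectrum of its closure lies in $(-\infty,0]$. Consequently $-\beta/\alpha$ is in the resolvent set, which is exactly the condition $-\beta\in\rho(\alpha T_\lambda)$; computing distance to the spectrum gives the quantitative estimate $\|(\alpha T_\lambda+\beta)^{-1}\|\leq 1/|\beta|$. For the second hypothesis, combine \autoref{prop:rechenregeln ab}(ii), which yields $G_\lambda^{\alpha,\beta}=G_\lambda(\alpha T_\lambda^\ast+\beta)^{-1}$, with \autoref{lem:polaron G}. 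Optimising the per-grade bound of \autoref{lem:polaron G} over $n$ (the maximum is achieved near $n+1\approx|\lambda|$) shows $\|G_\lambda\|\leq 1/(4\sqrt{|\lambda|})$ for $|\lambda|$ sufficiently large, so
\begin{equation*}
\|G_\lambda^{\alpha,\beta}I^\ast\|\leq \|G_\lambda\|\cdot\|(\alpha T_\lambda^\ast+\beta)^{-1}\|\leq \frac{1}{4|\beta|\sqrt{|\lambda|}},
\end{equation*}
which is strictly less than one once $|\lambda|$ is large enough. Hence $1\in\rho(G_\lambda^{\alpha,\beta}I^\ast)$, and \autoref{cor:HIBC s-a alpha n= 0} yields self-adjointness of $\HIBCab$.

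For the lower bound, I invoke the identity \eqref{eq:perturbation} from the proof of \autoref{thm:HIBC self-adjoint}: for the real $\lambda<0$ chosen above,
\begin{equation*}
\HIBCab-\lambda \;=\; (\mathrm{Id}-G_\lambda^{\alpha,\beta}I^\ast)^\ast(L_{\alpha,\beta}-\lambda)(\mathrm{Id}-G_\lambda^{\alpha,\beta}I^\ast)+IT_\lambda^{\alpha,\beta}I^\ast.
\end{equation*}
The first term is non-negative as a quadratic form, since $L_{\alpha,\beta}\geq 0$ (as noted just after \autoref{lem:polaron G}) and $-\lambda>0$. The second term is bounded: the formula \eqref{eq:4.25} in the proof of \autoref{cor:HIBC s-a alpha n= 0} presents $T_\lambda^{\alpha,\beta}$ as $\alpha^{-1}(\gamma\alpha\cdot\mathrm{Id}+R(-\beta,\alpha T_\lambda))$, i.e.\ a scalar multiple of the identity plus a bounded resolvent, so $IT_\lambda^{\alpha,\beta}I^\ast$ is bounded on $\H$. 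Therefore $\HIBCab\geq\lambda-\|IT_\lambda^{\alpha,\beta}I^\ast\|$, giving the required semiboundedness. The principal subtlety in the whole argument is that $\|G_\lambda\|$ must be made small uniformly across the grading $\bigoplus_n\H^{(n)}$; this is handled by the fact that the $\sqrt{n+1}$ in the numerator of the \autoref{lem:polaron G} bound is dominated by $n+1+|\lambda|$ in the denominator when $|\lambda|$ is large, giving decay in $\lambda$ without loss over $n$.
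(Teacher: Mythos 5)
Your argument is essentially the paper's own: self-adjointness via \autoref{cor:HIBC s-a alpha n= 0}, using the non-positivity and essential self-adjointness of $T_\lambda$ from \autoref{lem:polaron T} to get $-\beta\in\rho(\alpha T_\lambda)$ and the boundedness of $T_\lambda^{\alpha,\beta}$, and semiboundedness from $L_{\alpha,\beta}\geq 0$ combined with the representation \eqref{eq:perturbation}; you merely make explicit the verification of $1\in\rho(\Glambdaab I^*)$ that the paper leaves to the remark inside the corollary's proof. One quantitative slip: for $\lambda<0$ the bound of \autoref{lem:polaron G} is $\sqrt{n+1}\,/\bigl(2(n+1+|\lambda|)^{3/4}\bigr)$, whose supremum over $n$ (attained near $n+1=2|\lambda|$) is of order $|\lambda|^{-1/4}$, not the $|\lambda|^{-1/2}$ you state — but since any decay to zero, together with $\|(\alpha T_\lambda^*+\beta)^{-1}\|\leq |\beta|^{-1}$, makes $\|\Glambdaab I^*\|<1$ for $|\lambda|$ large, the conclusion is unaffected.
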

\begin{proof}
	Let $\lambda<0$ and $\overline{T}_{\lambda}$ be the self-adjoint closure of $T_\lambda$. Then by~\autoref {prop:rechenregeln ab}~(v), $T_\lambda^{\alpha,\beta}$ is a restriction of the bounded operator $\gamma + R(-\beta,  \overline{T}_{\lambda})$, and this implies the relative bounds required in~\autoref{thm:HIBC self-adjoint}.
\end{proof}

To treat the case $\alpha=0$ we also need:

\begin{lem}\label{lem:polaron trafo}
	For all $\lambda \in \C\setminus\R_+$, the operators $G_\lambda I^*$, $\Gamma_\lambda=(1-G_\lambda I^*)^{-1}$ and $(1-I^* G_\lambda)^{-1}$ leave $D(N)$ as well as $D(N^{1/2})$ invariant. 
\end{lem}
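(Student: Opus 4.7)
The plan is to observe first that, since $I = \Id$ in this model, both $G_\lambda I^*$ and $I^* G_\lambda$ equal $G_\lambda$, and consequently $(1-G_\lambda I^*)^{-1}$ and $(1 - I^* G_\lambda)^{-1}$ both reduce to $\Gamma_\lambda = (1 - G_\lambda)^{-1}$. It therefore suffices to show that $G_\lambda$ and $\Gamma_\lambda$ leave $D(N)$ and $D(N^{1/2})$ invariant.

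For $G_\lambda$ this will follow directly from the level-wise bound $\|G_\lambda f^{(n)}\| \leq C_n \|f^{(n)}\|$ of \autoref{lem:polaron G}, with $C_n = \frac{\sqrt{n+1}}{2|n+1-\lambda|^{1/2}\mathrm{Re}(\sqrt{n+1-\lambda})^{1/2}}$. Since $n+1-\lambda$ stays uniformly away from the negative real axis, one checks that $\sup_n C_n < \infty$ (and in fact $C_n\to 0$), so for $s \in \{1/2, 1\}$
\begin{equation*}
 \sum_m m^{2s} \|(G_\lambda f)^{(m)}\|^2 = \sum_n (n+1)^{2s}\|G_\lambda f^{(n)}\|^2 \leq \Big(\sup_n C_n^2\Big) \sum_n (n+1)^{2s}\|f^{(n)}\|^2,
\end{equation*}
which is finite whenever $f \in D(N^s)$.

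For $\Gamma_\lambda$ the plan is to exploit the fact that $G_\lambda$ raises the particle number by one, so that the equation $(1-G_\lambda)g = f$ can be solved level by level via $g^{(m)} = f^{(m)} + G_\lambda g^{(m-1)}$, giving the closed form $(\Gamma_\lambda f)^{(m)} = \sum_{n=0}^{m} G_\lambda^{m-n} f^{(n)}$ with $\|G_\lambda^{m-n} f^{(n)}\| \leq \prod_{j=0}^{m-n-1} C_{n+j} \cdot \|f^{(n)}\|$. Combining this with a uniform exponential decay $\prod_{j=0}^{k-1} C_{n+j} \leq Dr^k$ for some $D>0$ and $r\in(0,1)$ independent of $n$, and applying a standard Cauchy--Schwarz/Young convolution estimate, one bounds $\sum_m m^{2s}\|(\Gamma_\lambda f)^{(m)}\|^2$ by a constant times $\|f\|^2 + \|N^s f\|^2$, which gives $\Gamma_\lambda f \in D(N^s)$.

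The main obstacle is establishing the uniform exponential decay of the $C$-products. For $\lambda < 0$ this is immediate from \autoref{lem:polaron G}, which already yields $C_n < 1/2$ for every $n$. For general $\lambda \in \C\setminus\R_+$ one fixes $N_0 = N_0(\lambda)$ with $C_n < 1/2$ for all $n \geq N_0$; any product of $k$ consecutive factors then contains at most $N_0$ ``bad'' factors bounded by $M := \sup_n C_n$ and at least $k-N_0$ ``good'' factors bounded by $1/2$, yielding the bound $Dr^k$ after absorbing the finitely many ``bad'' factors into the constant $D$ and taking $r$ slightly larger than $1/2$.
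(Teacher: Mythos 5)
Your proposal is correct and takes essentially the same route as the paper's (much terser) proof: both rest on the fact that $G_\lambda I^*$ raises the particle number by one (the paper phrases this as $NG_\lambda I^*=G_\lambda I^*(N+1)$, you as the level-by-level formula $(\Gamma_\lambda f)^{(m)}=\sum_{n\le m}G_\lambda^{m-n}f^{(n)}$), combined with the decay of the level-wise norms from \autoref{lem:polaron G} and the Neumann-series representation of $\Gamma_\lambda$. You simply supply the quantitative details (uniform geometric bound on the products $\prod_j C_{n+j}$ and the Young convolution estimate) that the paper leaves implicit.
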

\begin{proof}
	We give the proof only for $D(N)$, the proof for $D(N^{1/2})$ being essentially the same.
	For $G_\lambda I^*$ this is obvious, since 
	\begin{equation}
	NG_\lambda I^*=G_\lambda I^*(N+1).
	\end{equation}
	For $\Gamma_\lambda$, this follows by the same logic and~\autoref{lem:polaron G}, since $N (G_\lambda I^*)^4$ is bounded and
	\begin{equation*}
	\Gamma_\lambda= \sum_{k=0}^\infty(G_\lambda I^*)^k.
	\end{equation*}
	The argument for $(1-I^* G_\lambda)^{-1}$ is the same.
\end{proof}

\begin{rem}\label{rem:hierarchy}
 The argument of the lemma shows that, in the case of a hierarchy, it is not necessary that $G_\lambda I^*$  be small in norm for $\Gamma_\lambda$ to exist and be given by the power series. Rather, it is sufficient that
 \begin{equation*}
  G_\lambda I^*\vert_{\H^{(n)}}: \H^{(n)} \to \H^{(n+1)}
 \end{equation*}
has a norm that decreases with $n$, e.g., so that $\|G_\lambda I^*\vert_{\H^{(n)}}\|^n$ is summable. 
\end{rem}

\begin{prop}
	For $\alpha=0$, $\beta\neq 0$, and $\delta, \gamma$ such that the symmetry condition of~\autoref{lem:symmetry} is satisfied, $H_\mathrm{IBC}^{0, \beta}$ is self-adjoint and bounded from below.
\end{prop}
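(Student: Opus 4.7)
The plan is to apply \autoref{cor:HIBC self-adjoint alpha = 0} (which itself reduces, via $T^{0,\beta}_\lambda = |\beta|^{-2} T_\lambda + \delta/\beta$ from~\autoref{prop:rechenregeln ab}(v), to checking the hypotheses of \autoref{thm:HIBC self-adjoint}) with $\lambda < 0$ chosen of sufficiently large absolute value. Two of the required hypotheses are immediate: $\lambda \in \rho(L)$ since $L = -\Delta_x + N \geq 0$, and \autoref{lem:polaron G} together with a brief maximization in $n$ yields $\|G_\lambda\| = O(|\lambda|^{-1/4})$, so that $\beta \in \rho(G_\lambda I^*)$ (equivalently $1 \in \rho(G_\lambda^{0,\beta} I^*)$) for $|\lambda|$ large.

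The nontrivial step is to verify the relative bound of $IT_\lambda I^*$ with respect to $K_\lambda^{0,\beta} := (\Id - G_\lambda I^*/\beta)^\ast(L-\lambda)(\Id - G_\lambda I^*/\beta)$. The first ingredient will be the estimate \eqref{eq:polaron T-est} on $T_\lambda$ combined with the operator inequality $L - \lambda \geq N + |\lambda|$ (valid since $\lambda < 0$ and $-\Delta_x \geq 0$), which gives for $g \in D(L)$
\begin{equation*}
\|T_\lambda g\|^2 \leq \Big(\sup_n \tfrac{(n+1)^2}{4(n+1+|\lambda|)(n+|\lambda|)^2}\Big)\|(L-\lambda)g\|^2.
\end{equation*}
A quick calculus check shows the supremum is of order $|\lambda|^{-1}$ (attained near $n \sim 2|\lambda|$), so that $\|T_\lambda g\| = O(|\lambda|^{-1/2})\|(L-\lambda) g\|$ on $D(L)$.

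To transfer this estimate from $D(L)$ to $D(H_{\mathrm{IBC}}^{0,\beta})$, the plan is to decompose $f = g_0 + G_\lambda I^* f/\beta$ with $g_0 := (\Id - G_\lambda I^*/\beta) f \in D(L)$ (\autoref{lem:domains}) and expand $(\Id - G_\lambda I^*/\beta)^{-1}$ as a geometric series, justified by~\autoref{lem:polaron trafo} and the smallness of $\|G_\lambda\|/|\beta|$. A Cauchy--Schwarz-type estimate across the particle sectors then controls $\|T_\lambda f\|$ by a multiple of $\|(L-\lambda) g_0\|$. Meanwhile, the identity $K_\lambda^{0,\beta} f = (L-\lambda) g_0 - \bar\beta^{-1} A g_0$ from~\autoref{prop:rechenregeln}(ii), together with the bound $\|A g_0\| \leq \|G_\lambda\|\,\|(L-\lambda) g_0\|$, yields $\|(L-\lambda) g_0\| \leq (1-\|G_\lambda\|/|\beta|)^{-1} \|K_\lambda^{0,\beta} f\|$. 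Combining gives a relative bound $\|IT_\lambda^{0,\beta} I^* f\| \leq O(|\beta|^{-2}|\lambda|^{-1/2})\|K_\lambda^{0,\beta} f\| + O(\|f\|)$, which is strictly less than $1$ for $|\lambda|$ large enough, and \autoref{thm:HIBC self-adjoint} then produces self-adjointness.

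Boundedness below will follow from the representation $H_{\mathrm{IBC}}^{0,\beta} - \lambda = K_\lambda^{0,\beta} + IT_\lambda^{0,\beta} I^*$ from~\eqref{eq:perturbation}: $K_\lambda^{0,\beta}$ is nonnegative self-adjoint (as a similarity transform of the nonnegative operator $L-\lambda$), and a symmetric relatively bounded perturbation with bound $<1$ shifts the lower bound only by a finite constant via Kato--Rellich, giving $H_{\mathrm{IBC}}^{0,\beta} \geq \lambda - C$. The main obstacle we anticipate is the hierarchical Cauchy--Schwarz estimate in the third step, which must sum the geometric series uniformly in the particle sector $n$ in order to turn the $L$-bound on $T_\lambda$ into a genuine $K_\lambda^{0,\beta}$-bound on all of $D(H_{\mathrm{IBC}}^{0,\beta})$.
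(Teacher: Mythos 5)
Your proposal is correct in outline and enters through the same door as the paper -- \autoref{cor:HIBC self-adjoint alpha = 0}, with the hypotheses checked via \eqref{eq:polaron T-est}, \autoref{lem:polaron G}, \autoref{lem:domains} and \autoref{lem:polaron trafo} -- but it handles the crucial relative bound differently. You make the bound small by sending $\lambda\to-\infty$ and tracking the $\lambda$-dependence of every constant: $\|G_\lambda\|=O(|\lambda|^{-1/4})$, $\|T_\lambda g\|=O(|\lambda|^{-1/2})\|(L-\lambda)g\|$ on $D(L)$, and then a sector-by-sector geometric-series estimate to transfer this to $D(\HIBCb)$. That last transfer step (your self-identified obstacle) does go through, because the per-sector norm of $G_\lambda I^*/\beta$ decays in $n$ so the series and the cross-sector Cauchy--Schwarz converge uniformly, but it is genuinely laborious. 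The paper avoids all of this with one observation: \eqref{eq:polaron T-est} shows $IT_\lambda I^*$ is bounded by $N^{1/2}$, while $N$ itself (one full power, not a square root) is relatively $(\Id - G_{\lambda} I^*)^* (L-\lambda) (\Id - \Glambda I^*)$-bounded, by $L$-boundedness of $N$ and the $D(N)$-invariance of \autoref{lem:polaron trafo}. Interpolating between $N^{1/2}$ and $N$ then makes the relative bound \emph{infinitesimal} for any fixed $\lambda<0$, so no largeness of $|\lambda|$ and no tracking of constants is needed, and semiboundedness drops out of Kato--Rellich exactly as in your last paragraph. In short: your route works and is more explicit and quantitative (it even yields a rate for the lower bound in $\lambda$), but the paper's interpolation argument is the shorter and more robust mechanism, and it is the one extra idea your write-up does not exploit.
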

\begin{proof}
	We may pick any $0>\lambda \in \rho(L)$, and in view of~\autoref{cor:HIBC self-adjoint alpha = 0} it is sufficient to prove that $I T_\lambda I^*$ is infinitesimally $(\Id - G_{\lambda} I^*)^* (L-\lambda) (\Id - \Glambda I^*)$-bounded .
	
	By \eqref{eq:polaron T-est}, $I T_\lambda I^*$ is $N^{1/2}$-bounded. Because $N$ is $L$-bounded,  the invariance of $D(N)$ established in~\autoref{lem:polaron trafo} gives that $N$ is $(\Id - G_{\lambda} I^*)^* (L-\lambda) (\Id - \Glambda I^*)$-bounded. This implies the required infinitesimal bound by interpolation and proves the claim.
\end{proof}

We see that for $\alpha\bar\beta\leq 0$ all of the theorems of~\hyperref[sect:Robin]{Section \ref*{sect:Robin}} can be applied to this model. For the case $\alpha\bar \beta>0$, which corresponds to attractive interactions between the first particle and the remaining ones (in addition to the interaction induced by creation/annihilation of particles) this is not obvious and we do not know whether it holds.

\subsection{A pointwise Robin condition}\label{sect:localRobin}

We now discuss the applicability of our classification results of \hyperref[sect:ext]{Section~\ref*{sect:ext}}
and apply them to an example in which the coefficients $\alpha, \beta$ of the boundary condition are position-dependent. Such conditions were discussed in~\cite{tumulka2020}, though without proving self-adjointness.

By~\autoref{lem:polaron G}, $(1-I^*G_\lambda)^{-1}$ leaves $D(T)=D(N^{1/2})$ invariant. Hence the hypothesis of~\autoref{prop: Omega} and~\autoref{thm:IBC-QBT} are satisfied and 
$
\{\dH, (B-I^*), (A_m-I^*)\}
$
is a quasi boundary triple for 
\begin{equation}
H_m=L_m+ \Id + I(A_m - B).
\end{equation}
The self-adjoint restrictions of $H_m$ are described in~\autoref{thm:genIBC-res} and~\autoref{thm:BiMi14}.

A relevant class of boundary conditions are local versions of the boundary condition $\alpha Af + \beta Bf=I^*f$ where $\alpha, \beta$ are functions.

Let $\alpha, \beta\in L^\infty(\R)$ with $\alpha + \beta \equiv 1$ (see also~\autoref{rem:a+b=1} below).
The corresponding boundary condition reads 
\begin{equation}\label{eq:ibc-x}
\alpha (x) \left(A_m f^{(n+1)}\right)(x, Y) + \beta(x) \left(B f^{(n+1)}\right)(x, Y)=f^{(n)}(x,Y),
\end{equation}
for $n\in \N$ and $(x,Y)\in \R\times\R^n$.

\begin{rem}\label{rem:a+b=1}
	The condition $\alpha + \beta \equiv 1$ is less  restrictive than it might seem. Since our only requirement on $I$ is boundedness, any pair of functions with $(\alpha + \beta)^{-1}\in L^\infty$ can be accommodated by modifying $I$. 
	More precisely, set $\tilde I=(\bar\alpha + \bar\beta)^{-1}I$, then the condition~\eqref{eq:ibc-x} becomes,
	\begin{equation}
	\frac{\alpha (x)}{\alpha(x)+\beta(x)} A_m f^{(n+1)} + \frac{\beta(x)}{\alpha(x)+\beta(x)} B f^{(n+1)}=\frac{1}{\alpha(x)+\beta(x)}f^{(n)}= (\tilde I^*f)^{(n)},
	\end{equation}
	where the coefficients $\tilde \alpha:=\alpha(\alpha+\beta)^{-1}$, $\tilde \beta:=\beta(\alpha+\beta)^{-1}$ now satisfy $\tilde\alpha+\tilde \beta\equiv 1$.
\end{rem}

The relation corresponding to~\eqref{eq:ibc-x} is
\begin{equation}
\mathfrak{R}_{\alpha, \beta}=\{(\alpha f, -\beta f) \vert f\in \dH\}, 
\end{equation}
where $\alpha, \beta$ are the operators of multiplication by the respective function. To see this, note that $\left((B-I^*)f,(A_m-I^*)f\right)\in \mathfrak{R}_{\alpha, \beta}$ means that
\begin{equation}
\beta(B-I^*)f= -\alpha (A_m-I^*)f \Longleftrightarrow \alpha A_m f+ \beta B f= \underbrace{(\alpha + \beta)}_{=\Id} I^* f.
\end{equation}

Recall that 
self-adjointness of $H_{\mathfrak{R}_{\alpha, \beta}}$ is related to the operator
\begin{equation}
S_\lambda=(A_m-I^*)F_\lambda
\end{equation}
with $D(S_\lambda)=D(F_\lambda)=D(T)$, which has similar properties as $T_\lambda$.

\begin{lem}\label{lem:5.8}
	Let $\lambda_0:=\inf \sigma(\HIBC)$. Then for $\lambda<\lambda_0$, $S_\lambda$ is non-positive. 
\end{lem}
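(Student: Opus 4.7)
The plan is to prove the identity $\langle S_\lambda\varphi,\varphi\rangle = -\|(\HIBC-\lambda)^{1/2}F_\lambda\varphi\|^2$ for every $\varphi\in D(T)$, which gives the claim immediately since $\HIBC-\lambda\ge\lambda_0-\lambda>0$ as an operator on $\H$.

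The starting point would be the Green identity of \autoref{lem:integration by parts-2} applied to $w\in D(\HIBC)$ and $u:=F_\lambda\varphi$. By \autoref{prop: Omega}, $u\in D(H_m)\cap D(A_m)$, $H_m u=\lambda u$, and $(B-I^*)u=\varphi$; together with $(B-I^*)w=0$ and $\lambda\in\R$, the identity collapses to
\begin{equation*}
 \langle(\HIBC-\lambda)w,F_\lambda\varphi\rangle \;=\; -\langle(A_m-I^*)w,\varphi\rangle\qquad(w\in D(\HIBC)).
\end{equation*}
I would then insert the Yosida-type regularisation $w_n:=n(\HIBC-\lambda+n)^{-1}F_\lambda\varphi\in D(\HIBC)$, which converges strongly to $F_\lambda\varphi$. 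Using the spectral theorem for $\HIBC$ with spectrum in $[\lambda_0,\infty)$, the right-hand side above becomes $-\int_{\lambda_0}^\infty n(t-\lambda)(t-\lambda+n)^{-1}\,d\|E_tF_\lambda\varphi\|^2$, and monotone convergence (the integrand is positive and increasing in $n$ to $t-\lambda$) gives the limit $-\|(\HIBC-\lambda)^{1/2}F_\lambda\varphi\|^2\in[-\infty,0]$.

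Since $\langle I^*w_n,\varphi\rangle\to\langle I^*F_\lambda\varphi,\varphi\rangle$ by strong convergence, the proof is complete as soon as $\langle A_mw_n,\varphi\rangle\to\langle A_mF_\lambda\varphi,\varphi\rangle$, yielding
\begin{equation*}
 \langle S_\lambda\varphi,\varphi\rangle \;=\; \lim_{n}\langle(A_m-I^*)w_n,\varphi\rangle \;=\; -\|(\HIBC-\lambda)^{1/2}F_\lambda\varphi\|^2 \;\le\; 0.
\end{equation*}
The main obstacle is precisely this last convergence, which is equivalent to the nontrivial regularity statement that $F_\lambda\varphi$ lies in the form domain $\mathcal{Q}(\HIBC):=D((\HIBC-\lambda)^{1/2})$. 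A cleaner way to encode the same issue is to show that $(A_m-I^*)(\HIBC-\lambda)^{-1/2}$ extends to a bounded operator $K_\lambda\colon\H\to\dH$; granting this, one obtains the factorisation $F_\lambda=-(\HIBC-\lambda)^{-1/2}K_\lambda^*$ and hence $S_\lambda=-K_\lambda K_\lambda^*\le 0$ at once. I would extract this boundedness by dualising the displayed identity and using the relative-boundedness hypothesis of \autoref{ass:prop 4.6}~(iii) to upgrade the resulting $\HIBC$-graph-norm bound on $w\mapsto\langle(A_m-I^*)w,\varphi\rangle$ to the form-norm bound required.
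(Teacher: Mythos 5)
Your endgame coincides with the paper's: both arguments rest on the factorisation $S_\lambda=-K_\lambda K_\lambda^*$ with $K_\lambda=(\tilde A-I^*)(\HIBC-\lambda)^{-1/2}$, which is precisely the paper's identity \eqref{eq:polaron S}, and you correctly isolate the real content of the lemma, namely that $F_\lambda\varphi$ lies in the form domain of $\HIBC$ and that $A_m-I^*$ extends to that form domain. The Green-identity/Yosida preamble is a sound reformulation of this and the sign bookkeeping is right.

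The problem is how you propose to close that regularity statement. First, $K_\lambda$ does \emph{not} extend to a bounded operator $\H\to\dH$ in this model: the paper notes explicitly that $\tilde A(L-\lambda)^{-1/2}$ is only $N^{1/2}$-bounded, not bounded, because of the $\sqrt{n}$ prefactor in $A$, and the same growth afflicts $(\tilde A-I^*)(\HIBC-\lambda)^{-1/2}$. This by itself is not fatal --- $-K_\lambda K_\lambda^*\le 0$ on $D(T)=D(N^{1/2})$ holds for the densely defined unbounded $K_\lambda$ as well, which is all the paper uses --- but your route via boundedness is not available. Second, and more seriously, \autoref{ass:prop 4.6}~(iii) is a bound on $IT_\mu I^*$ relative to the graph norm of the second-order operator $(\Id - G_{\bar{\mu}} I^*)^*(L - \mu)(\Id - G_{\mu} I^*)$; no dualisation of the identity $\langle(\HIBC-\lambda)w,F_\lambda\varphi\rangle=-\langle(A_m-I^*)w,\varphi\rangle$ turns this into control of $A_m-I^*$ in the \emph{form} norm of $\HIBC$. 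That upgrade from graph-norm to form-norm control is exactly the nontrivial step, and the paper obtains it from model-specific inputs rather than from the abstract assumptions: the form-domain identity $D(|\HIBC|^{1/2})=\Gamma_\lambda D(L^{1/2})$ (proved by closing the parametrisation of \autoref{lem:domains} in the form norm), the inclusion $\Gamma_\lambda D(L^{1/2})\subset D(L^{1/2})$, which uses $g_\mu\in H^1(\R)$, and the Sobolev-embedding extension $\tilde A$ of $A$ to $D(L^{1/2})$ sector by sector. Without a substitute for these concrete facts your argument does not close; as stated, the decisive step is asserted rather than proved.
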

\begin{proof}
	For the form domains we have, similarly to~\autoref{lem:domains} (to prove this, take the closure of $D(\HIBC)$ in the appropriate norm), 
	\begin{equation}
	D\left(|\HIBC|^{1/2})\right)=\Gamma D(L^{1/2}).
	\end{equation}
	Since $g_\mu \in H^1(\R)$ in our example, we find $\Gamma D(L^{1/2})\subset D(L^{1/2})$.
	Let $\tilde A$ be the extension of $A$ to $D(L^{1/2})\cap \H^{(n)}$ for arbitary $n$ (note that $\tilde A (L-\lambda)^{-1/2}$ is not bounded, but $N^{1/2}$-bounded, due to the pre-factor $\sqrt{n}$ in the definition of $A$).
	For $\lambda<\lambda_0$, $-R(\lambda, \HIBC)=(\HIBC-\lambda)^{-1}>0$, so we have
	\begin{equation}\label{eq:polaron S}
	S_\lambda=- (\tilde A -I^*)(\HIBC-\lambda)^{-1/2}\left((\tilde A -I^*)(\HIBC-\lambda)^{-1/2}\right)^*\leq 0.
	\end{equation}
\end{proof}

\begin{prop}\label{prop:localRobin}
	Let $\alpha, \beta \in L^\infty$ with $\alpha+ \beta\equiv1$ and assume that there exists $\delta>0$ such that for all $x\in \R$
	\begin{equation*}
	\alpha(x)\bar\beta(x) \leq -\delta.
	\end{equation*}
	Then the operator $H_{\mathfrak{R}_{\alpha, \beta}}$ is self-adjoint.
\end{prop}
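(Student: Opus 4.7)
The plan is to apply~\autoref{thm:genIBC-res} to the symmetric relation $\mathfrak{R}_{\alpha, \beta}$, taking a real $\lambda$ sufficiently negative so that both~\autoref{ass:prop 4.6} (valid here by the analysis of the previous subsection) and the non-positivity of $S_\lambda$ (by~\autoref{lem:5.8}, for $\lambda < \inf\sigma(\HIBC)$) are available. I would observe first that $\alpha + \beta \equiv 1$ combined with $\alpha\bar\beta \in \R$ forces $\alpha$ and $\beta$ to be real-valued: writing $\alpha = a + ib$ and $\beta = (1-a) - ib$, the imaginary part of $\alpha\bar\beta$ equals $b$, which must therefore vanish. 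The bound $\alpha\beta \leq -\delta$ then yields uniform positive lower bounds on $|\alpha|$ and $|\beta|$, so in particular multiplication by $\alpha^{-1}$ is bounded on $\dH$. Symmetry of $\mathfrak{R}_{\alpha, \beta}$ is then immediate from $\alpha\bar\beta \in \R$, since $\langle \alpha f, -\beta g\rangle - \langle -\beta f, \alpha g\rangle = \int(\alpha\bar\beta - \bar\alpha\beta)\bar f g = 0$.

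To show $\mathfrak{R}_{\alpha, \beta} - S_\lambda$ is one-to-one, suppose $(\alpha f, 0)$ lies in this relation, so that $\alpha f \in D(S_\lambda) = D(T)$ and $-\beta f = S_\lambda(\alpha f)$. Pairing with $\alpha f$ yields $\langle \alpha f, S_\lambda \alpha f\rangle = -\int \alpha\beta |f|^2 \geq \delta \|f\|^2$, while the left-hand side is non-positive by~\autoref{lem:5.8}. Hence $f = 0$, as required.

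The remaining and principal obstacle is the range condition $\rg(F_\lambda^*) \subset \rg(\mathfrak{R}_{\alpha, \beta} - S_\lambda)$. After the substitution $\varphi = \alpha f$ this reduces to solving $(V - S_\lambda)\varphi = F_\lambda^* h$ for $\varphi \in D(T)$, where $V := -\beta/\alpha$ is multiplication by a bounded positive function with uniform lower bound $c := \delta/\|\alpha\|_\infty^2 > 0$. Since $V$ depends only on the $x$-variable, it commutes with $N$ and thus preserves $D(T) = D(N^{1/2})$; combined with non-positivity of $S_\lambda$, the symmetric operator $V - S_\lambda$ on $D(T)$ is bounded below by $c$, and hence has a self-adjoint closure $K \geq c$ that is boundedly invertible on $\dH$. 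The hardest step, which I expect to be the main technical difficulty, is upgrading this to a range statement for the un-closed operator, i.e., showing $K^{-1}\rg(F_\lambda^*) \subset D(T)$. For this I would combine the explicit form $F_\lambda^* = (A_m - I^*)R(\bar\lambda, \HIBC)$ with the $\HIBC$-boundedness of $N^{1/2}$ established in the proof of~\autoref{prop.5.3} to obtain $\rg(F_\lambda^*) \subset D(T)$; invariance of $D(T)$ under multiplication by both $V$ and $V^{-1}$ should then deliver the desired inclusion, after which self-adjointness of $H_{\mathfrak{R}_{\alpha, \beta}}$ follows directly from~\autoref{thm:genIBC-res}.
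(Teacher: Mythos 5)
Your overall strategy is the same as the paper's: reduce via \autoref{thm:genIBC-res} to showing that $\mathfrak{R}_{\alpha,\beta}-S_\lambda$ is injective with range containing $\rg(F_\lambda^*)$, rewrite the relation as the graph of $V-S_\lambda$ on $D(T)$ with $V=-\beta\alpha^{-1}\geq\delta\|\alpha\|_\infty^{-2}$, and play the strict positivity of $V$ against the non-positivity of $S_\lambda$ from \autoref{lem:5.8}. The symmetry and injectivity checks are correct (and your observation that $\alpha+\beta\equiv 1$ together with $\alpha\bar\beta\in\R$ forces $\alpha,\beta$ to be real is a nice, if inessential, remark). One inaccuracy along the way: a densely defined symmetric operator bounded below by $c>0$ need not have a self-adjoint \emph{closure}; what it has is a self-adjoint extension $K\geq c$, e.g.\ the Friedrichs extension $-(\beta\alpha^{-1}+S_\lambda^F)$, which is what the paper uses. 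This is cosmetic, since the rest of your argument only needs some boundedly invertible self-adjoint extension of $V-S_\lambda$.

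The genuine gap is in the final step. You propose to deduce $K^{-1}\rg(F_\lambda^*)\subset D(T)$ from (a) $\rg(F_\lambda^*)\subset D(T)$ and (b) invariance of $D(T)$ under multiplication by $V^{\pm1}$. Neither ingredient delivers the conclusion: knowing that $\psi=F_\lambda^*h$ lies in $D(T)$ and that $V$ preserves $D(T)$ says nothing about where $K^{-1}\psi$ lies, since $\rg(K^{-1})=D(K)$ a priori only contains $D(T)$ and is in general strictly larger, so $K^{-1}\psi$ may well fall outside $D(T)$; the multiplication operators $V^{\pm1}$ never interact with $K^{-1}$. Moreover, claim (a) is itself unjustified and appears to be false: it would require $N^{1/2}(A_m-I^*)R(\bar\lambda,\HIBC)$ to be bounded, which does not follow from the $\HIBC$-boundedness of $N^{1/2}$ (or of $N$), because $A_m$ costs an additional half power of $N$ on top of the trace (compare \eqref{eq:polaron T-est} and \autoref{lem:polaron G}, which give $\|AR(\lambda,L)\vert_{\H^{(n+1)}}\|\sim n^{-1/4}$, so that $N^{1/2}AR(\lambda,L)$ is already unbounded). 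What actually closes the argument in the paper is the factorization \eqref{eq:polaron S}, $-S_\lambda=CC^*$ with $C=(\tilde A-I^*)(\HIBC-\lambda)^{-1/2}$: as in \autoref{lem:polaron T}, $K^{-1}CC^*K^{-1}=K^{-1}-K^{-1}VK^{-1}$ is bounded, hence $K^{-1}C$ is bounded, and together with the identification of the form domains $D(|\HIBC|^{1/2})$ and $D(L^{1/2})$ this yields boundedness of $N^{1/2}K^{-1}F_\lambda^*$. It is this smoothing effect of $K^{-1}$ itself, not any mapping property of $F_\lambda^*$ alone, that gives $K^{-1}\rg(F_\lambda^*)\subset D(T)$; without it your proof does not go through.
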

\begin{proof}
	We use the criterion of~\autoref{thm:genIBC-res}, i.e. we prove that $\mathfrak{R}_{\alpha, \beta} - S_\lambda$ is invertible on $\rg F_\lambda^*$ for $\lambda<\lambda_0$ (as above).

	Using the properties of $\alpha$ and $\beta$, we see that $|\alpha| \cdot |1-\alpha| \geq \delta$, so $\alpha^{-1}\in L^{\infty}$ and the relation can be rewritten as
	\begin{align}\label{eq:rel-ab S}
	\mathfrak{R}_{\alpha, \beta} - S_\lambda
	= & \{ (\alpha \varphi, -\beta \varphi - S_\lambda \alpha \varphi) | \alpha \varphi\in D(T) \} \\
	= & \{ (\varphi, -\beta \alpha^{-1} \varphi - S_\lambda \varphi) | \varphi\in D(T) \}.\notag
	\end{align}
	Let $\lambda<\lambda_0$ as above, and let $S_\lambda^F$ be the self-adjoint Friedrichs extension of $S_\lambda$. As $\beta \alpha^{-1}= \bar\beta \alpha (|\alpha|)^{-2} \leq - \delta \|\alpha\|_\infty^{-2}$, the operator 
	\begin{equation}
	\beta \alpha^{-1} + S_\lambda^F
	\end{equation}
	is strictly negative, self-adjoint and thus invertible. If $S_\lambda=S_\lambda^F$ we are finished since then the relation~\eqref{eq:rel-ab S} is invertible everywhere.

	If $S_\lambda \neq S_\lambda^F$ we can conlcude by showing that
	\begin{equation*}
	(-\beta \alpha^{-1} - S_\lambda^F)^{-1}\rg F^*_\lambda \subset D(T)=D(N^{1/2}),
	\end{equation*}
	since then  $S_\lambda^F$ can again be replaced by $S_\lambda$.
	This follows from the representation~\eqref{eq:polaron S} by the arguments of~\autoref{lem:polaron T}, since boundedness of 
	\begin{equation}
	(\beta \alpha^{-1} + S_\lambda^F)^{-1} (\tilde A - I^*) (\HIBC-\lambda)^{-1/2}
	\end{equation}
	together with the equality of $D(L^{1/2})$ and $D(|\HIBC|^{1/2})$ implies boundedness of
	\begin{equation}
	N^{1/2}(\beta \alpha^{-1} + S_\lambda^F)^{-1} F_\lambda^*. \qedhere 
	\end{equation}
	
\end{proof}


\appendix

\section{}

\subsection{Linear relations}
\label{sect:relations}

Here we briefly recall the relevant notions for linear relations in a Hilbert space $\H$. These generalise the corresponding notions for operators with the relation given by the graph. 
For a linear relation $\mathfrak{R}$ in $\H$ (i.e. a subspace of $\H\oplus\H$), the domain, range, kernel are defined by
\begin{align*}
D(\mathfrak{R})&=\{\varphi \in \H\vert \exists \eta \in \H: (\varphi, \eta)\in \mathfrak{R}\} \\
\rg(\mathfrak{R})&=\{ \varphi \in \H \vert \exists \psi \in \H: (\psi, \varphi)\in  \mathfrak{R}\} \\
\ker(\mathfrak{R})&=\{ \varphi \in \H \vert (\varphi, 0)\in  \mathfrak{R}\}.
\end{align*}
The following operations are defined on the set of linear relations in a Hilbert space $\H$
\begin{align*}
\mathfrak{R} + \mathfrak{S}&= \{(\varphi, \xi+\eta) \vert  (\varphi, \xi)\in \mathfrak{R},\, (\varphi, 
\eta)\in \mathfrak{S}\} \\
-\mathfrak{R}&= \{(\varphi, -\psi) \vert  (\varphi, \psi)\in \mathfrak{R}\} \\
\mathfrak{R}\mathfrak{S}&=\{(\varphi, \eta)\in \H\oplus\H \vert \exists \xi\in \H: (\varphi, \xi)\in \mathfrak{S} 
\text{ and} (\xi, \eta)\in \mathfrak{R}\}\\
\mathfrak{R}^{-1}&=\{(\varphi, \eta)\in \H\oplus\H \vert (\eta, \varphi)\in \mathfrak{R} \}.
\end{align*}
The adjoint relation is given by
\begin{equation}\label{eq:adjoint}
\mathfrak{R}^*:=\{(\varphi, \eta) \in \H\oplus\H\vert \forall (\psi, \xi)\in \mathfrak{R}: \langle \varphi, \xi 
\rangle_{\H} = \langle \psi, \eta \rangle_{\H}\}.
\end{equation}
A relation is symmetric if $\mathfrak{R} \subset \mathfrak{R}^*$ (as sets) and self-adjoint if 
$\mathfrak{R}=\mathfrak{R}^*$. Clearly an operator is self-adjoint if and only if its graph is a self-adjoint relation.

\subsection{Quasi-boundary triples}
\label{appendix}

In this section we briefly recall the definition of quasi boundary triples. Moreover, we translate results for quasi boundary triples in the language of our abstract framework. 

\begin{defn}\label{def:qbt}
	A triple $(\dH, \tilde{B}, \tilde{A})$ is called a \emph{quasi boundary triple} for an operator $\tilde{L} \colon D(\tilde{L}) \subset \H \to \H$ if
	$\dH$ is a Hilbert space and $\tilde{A}, \tilde{B} \colon D(\tilde{L}) \subset \H \to \dH$ are operators such that
	\begin{enumerate}[(i)]
		\item the second Green identity holds
		\begin{equation*}
			\langle \tilde{L} f, g \rangle_\H 
			- \langle f, \tilde{L} g \rangle_\H 
			= \langle \tilde{A} f, \tilde{B}g \rangle_{\dH}
			- \langle \tilde{B}f, \tilde{A}g \rangle_{\dH}
		\end{equation*}
		for all $f, g \in D(\tilde{L})$.
		\item The map $(\tilde{A}, \tilde{B}) : D(\tilde{L}) \to \dH\times\dH$ has dense range. 
		\item The restriction $L := \tilde{L}|_{\ker(\tilde{B})}$ is a self-adjoint operator on $\H$. 
	\end{enumerate} 
\end{defn}

This is equivalent to the definition given in~\cite[Def.2.1]{BeLa2007} in terms of relations, since by~\cite[Thm.2.3]{BeLa2007} the operator $\tilde L$ is always a restriction of (the relation) $L_0^*$ for the closed, but not necessarily densely defined, operator $L_0=L\vert_{\ker{A}}$. In other works, the additional assumption that $D(L_0)$ is dense is made, but we explicitly avoid this. 

By~\autoref{rem:boundary triple}, $(\dH, A_m, B)$ is a quasi boundary triple for $\tilde L=L\vert_{D(A_m)}$.
In \autoref{thm:IBC-QBT} we show that $\big( \dH,	 (B-I^*), (A_m-I^*)\big)$ is a quasi boundary triple for $H_m$.

\smallskip 

A quasi boundary triple is called an ordinary boundary triple if $\rg(\tilde{A},\tilde{B}) = \dH^2$ and a generalized boundary triple if $\rg(\tilde{B}) = \dH$. 

\smallskip

Note that a quasi boundary triple for ${\hat{L}}^\ast$ exists if and only if the defect indices $n_{\pm}(\hat{L}) := \dim(\ker(\hat{L}^\ast \mp i))$ of $\hat{L}^*$ coincide. Further, if the defect indices of $\hat{L}$ are finite the quasi boundary triple for $\hat{L}$ is an ordinary boundary triple.
Moreover, the operator $(\tilde{A},\tilde{B}) \colon D(\tilde{L}) \subset \H \to \dH \times \dH$ is closable and by \cite[Prop. 2.2]{BeLa2007} $\ker(\tilde{A},\tilde{B}) = D({\hat{L}})$ holds. By \cite[Thm. 2.3]{BeLa2007} it follows that $\tilde{L} = \hat{L}^\ast$ if and only if $\rg(\tilde{A},\tilde{B}) = \dH^2$. In this case the restriction $L := \hat{L}^\ast|_{\ker(\tilde{B})}$ is self-adjoint and the quasi boundary triple $(\dH,\tilde{B},\tilde{A})$ is an ordinary boundary triple.

\medskip 

For each $\lambda \in \rho(L)$ the definition of a quasi boundary triple yields the decomposition
\begin{equation}
	D(\tilde{L}) = D(L) \oplus \ker(\lambda - \tilde{L})
	\label{eq:decomposition:appendix}
\end{equation}
For $\tilde{L} = {L_m}|_{D(A_m)}$, $\tilde{B} = B$ the decompositions \eqref{eq:decomposition:appendix} and \eqref{eq:decomposition} coincide. While we used the Dirichlet operator $G_\lambda$ to obtain this decomposition, here the situation is contrariwise. Starting with the decomposition one obtains that the restriction $\tilde{B}|_{\ker(\lambda - \tilde{L})}$ is injective and $\rg(\tilde{B}|_{\ker(\lambda- \tilde{L})})  = \rg(\tilde{B})$. This yields to the following definition.

\begin{defn}
	Let $(\dH,B,A)$ a quasi boundary triple for $\tilde{L} \subset \hat{L}^\ast$.
	The \emph{$\gamma$-field corresponding to} $(\dH,\tilde{B},\tilde{A})$ is given by 
	\begin{equation*}
		\tilde{G} \colon \rho(L) \to \mathcal{L}(\dH,\H) \colon \lambda \mapsto (\tilde{B}|_{\ker(\lambda-\tilde{L})})^{-1} .
	\end{equation*}
	Moreover the \emph{Weyl function associated to} $(\dH,\tilde{B},\tilde{A})$ is given by
	\begin{equation*}
		\lambda \mapsto \tilde{T} (\lambda) := A \tilde{G}(\lambda) .
	\end{equation*}
\end{defn}

We point out that the $\gamma$-field $\tilde{G}(\lambda)$ at $\lambda \in \rho(L)$ equals to the abstract Dirichlet operator $G_\lambda$ by \autoref{prop:rechenregeln}~(i). Hence the Weyl function $\tilde{T}(\lambda)$ at $\lambda \in \rho(L)$ coincide with the abstract Dirichlet-to-Neumann operator $T_\lambda$.

\begin{rem}\label{rem:appendix}
	In \cite{BeLa2007} it is shown that for a boundary triple $(\dH,\tilde{B},\tilde{A})$
	\begin{equation}
	\tilde{T}(\lambda)^\ast \subset \tilde{T}(\bar{\lambda}) \label{eq:mlambdabar}
	\end{equation}
	for all $\lambda \in \rho(L)$ holds.
	Further, if $(\dH,\tilde{B},\tilde{A})$ is a generalized (in particular a ordinary) boundary triple, equality holds, i.e.
	\begin{equation}
	\tilde{T}(\lambda)^\ast = \tilde{T}(\bar{\lambda}) 
	\end{equation}
	for all $\lambda \in \rho(L)$. 
\end{rem}

Note that our approach is contrariwise. We start with \autoref{ass:general} and show that $(\dH,B,A_m)$ is a quasi boundary triple for ${L_m}|_{D(A_m)}$. Whereas in \cite{BeLa2007} they start with a quasi boundary triple and see that \eqref{eq:mlambdabar} holds. 

\medskip 

Consider a Hilbert space $\mathcal{H}$ and a closed, densely defined operator $U \colon D(U) \subset \mathcal{H} \to \mathcal{H}$ with $\rho(U) \not = \emptyset$.
Replacing $U$ by $U-\lambda$ for $\lambda \in \rho(U)$ we assume without loss of generality that $0 \in \rho(U)$. 
Now we define the norm $\| \phi \|_{-1} := \| U^{-1} \phi \|_{\mathcal{H}}$
and by $\mathcal{H}_{-1} := (\mathcal{H}, \| \cdot \|_{-1})^{\sim}$ the completion of $\mathcal{H}$ with respect to the $\| \cdot \|_{-1}$-norm. 
Now $\mathcal{H}_{-1}$ equipped with the $\| \cdot \|_{-1}$-norm is a Banach space and if $U$ is symmetric a Hilbert space. 
Further, we define $\mathcal{H}_1$ by $D(U)$ equipped with the graph norm. 
Denote by $U_{-1}^{-1}$ the unique extension of $U^{-1}$ from $\mathcal{H}_{-1}$ to $\mathcal{H}$. 
We point out that $U^{-1}$ is an isometry from $\mathcal{H}_{1} \to \mathcal{H}$ and $U_{-1}^{-1}$ is an isometry from $\H \to \H_{-1}$, i.e.,
\begin{equation*}
\mathcal{H}_{-1} \stackrel{U_{-1}^{-1}}{\longrightarrow} \mathcal{H} \stackrel{U^{-1}_{\phantom{-1}}}{\longrightarrow} \mathcal{H}_{1}.
\end{equation*}

If $U$ is generator of a strongly continuous semigroup the space $\H_{-1}$ is the extrapolation space of order $-1$ associated to $U$ (see \cite[Def.~II.5.4]{EN:00}). We refer to \cite[Sect.~II.5]{EN:00} for more details about extrapolation spaces.

\smallskip  

Now consider the quasi boundary triple $(\dH, \tilde A, \tilde B)$ for $\tilde L$. Let $G_\lambda$ be the associated family of abstract Dirichlet operators (the $\gamma$-field). 
Assume that $\mathscr{G}:=\rg \tilde A\vert_{D(L)}$ is dense. Then 
\begin{equation}
M:=(G_i^* G_i)^{1/2}
\end{equation}
defines a positive, injective operator on $\dH$. By~\cite[Prop. 2.9]{BeMi14} we have that $\mathscr{G}=\rg G_i^* = \rg M$. Note that $M^{-1} \colon \mathscr{G} \subset \dH \to \dH$ is an invertible, densely defined, closed operator. We set $U \coloneqq M^{-1}$ and $\mathscr{G}_+ \coloneqq \mathscr{G}_{1}$, $\mathscr{G}_- \coloneqq \mathscr{G}_{-1}$. Then the spaces are Hilbert spaces. Denote the unique extension $M_{-} \coloneqq (M^{-1})_{-1}^{-1} \colon \mathscr{G}_- \to \dH$ we
obtain the isometries
\begin{equation*}
 \mathscr{G}_- \stackrel{M_{-}}{\longrightarrow} \dH \stackrel{M_{\phantom{ }}}{\longrightarrow} \mathscr{G}_+.
\end{equation*}
Denote by $\bar{L}$ the closure of $\tilde{L}$ and by $\bar{B} \colon D(\bar{L}) \to \mathscr{G}_-$ the unique extension of $\tilde{B}$. 
Moreover \eqref{eq:decomposition:appendix} yields
the decomposition
\begin{equation}
	D(\bar{L}) = D(L) \oplus \ker(\lambda - \bar{L})
\end{equation}
for $\lambda \in \rho(L)$ and we denote the projection onto $D(L)$ by $\pi$. 
Define $\hat{B} \coloneqq M_{-} \circ \bar{B} \colon D(\bar{L}) \to \dH$ and $\hat{A} \coloneqq M^{-1} \circ \tilde{A} \circ \pi \colon D(\bar{L}) \to \dH$.
By \cite[Thm.~2.12]{BeMi14} $(\dH,\hat{A},\hat{B})$ is an ordinary boundary triple for $\bar{L}$. 
This allows to extend the classification of ordinary boundary triples (see \cite[Thm.~3.1]{BeMi14}) to quasi boundary triples (see \cite[Thm.~3.4]{BeMi14}).
For more details we refer to \cite[Sect.~3]{BeMi14}. See also \cite{derkach1995}. 

\begin{thm}[{\cite[Cor.~3.5]{BeMi14}}]\label{thm:appendix}
	Let $(\dH,\tilde{B},\tilde{A})$ a quasi boundary triple for $\tilde{L}$. Assume that there exists a $\lambda \in \R \cap \rho(\tilde{L})$. Let
	$\mathfrak{R}$ be a relation in $\dH$. Then
	$L|_{\mathfrak{R}}$ given by
	\begin{align*}
	L_\mathfrak{R}&=H\vert_{D(L_\mathfrak{R})} \\
	D(L_\mathfrak{R})&=\Big\{f\in D(\tilde{L}) : \big(\tilde{B}f,\tilde{A}f\big)\in \mathfrak{R}\Big\}. 
	\end{align*}
	is self-adjoint if and only if the relation
	\begin{equation*}
		M^{-1} (\mathfrak{R}-T(\lambda)) M_{-}^{-1}
	\end{equation*}
	is self-adjoint and satisfies $D(\mathfrak{R}) \subset M_- D(T)$. 
\end{thm}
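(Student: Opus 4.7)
My plan is to deduce the criterion from the Derkach-Malamud classification for \emph{ordinary} boundary triples by reducing the given quasi boundary triple to an ordinary one via the isometries $M$ and $M_-$ that were constructed in the paragraphs preceding the theorem.

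First, I would verify carefully that the triple $(\dH, \hat{B}, \hat{A})$ with $\hat B = M_-\circ \bar B$ and $\hat A = M^{-1}\circ \tilde A\circ \pi$ is an ordinary boundary triple for $\bar L$ (this is exactly the content of \cite[Thm.~2.12]{BeMi14}). The Green identity for $\hat A, \hat B$ follows from the one for $\tilde A, \tilde B$ via the isometry $M_- = M^{-*}$, while surjectivity of $(\hat B, \hat A)\colon D(\bar L)\to \dH\oplus\dH$ is a consequence of the surjectivity of $M$, $M_-$ and the decomposition $D(\bar L) = D(L)\oplus \ker(\lambda - \bar L)$ using the projection $\pi$. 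A quick computation then yields the Weyl function of $(\dH,\hat B,\hat A)$: on $\ker(\lambda-\bar L)$, $\pi f = 0$, so $\hat A f = 0$ whenever $\hat B f\neq 0$, which means that the transformed Weyl function vanishes identically (or, equivalently, has been absorbed into the transformation).

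Second, I would translate the abstract boundary condition. For $f\in D(\bar L)$ with $f = f_0 + G_\lambda \varphi$, $f_0=\pi f$, $\varphi = \bar B f$, one has $\tilde A f = \tilde A f_0 + T(\lambda) \varphi$, hence
\begin{equation*}
\hat B f = M_- \tilde B f, \qquad \hat A f = M^{-1}\bigl(\tilde A f - T(\lambda)\,\tilde B f\bigr).
\end{equation*}
A direct substitution then gives the equivalence
\begin{equation*}
(\tilde B f, \tilde A f)\in \mathfrak R \;\Longleftrightarrow\; (\hat B f, \hat A f)\in M^{-1}\bigl(\mathfrak R - T(\lambda)\bigr) M_-^{-1} =:\mathfrak S_\lambda,
\end{equation*}
and the condition $D(\mathfrak R)\subset M_- D(T)$ is precisely what is needed to guarantee that $\hat A f \in \dH$ (as opposed to lying merely in the rigged space $\mathscr G_-$), so that $\mathfrak S_\lambda$ is a genuine relation in $\dH\oplus\dH$ and the correspondence $\mathfrak R\leftrightarrow \mathfrak S_\lambda$ is a bijection between boundary conditions for $\tilde L$ and for $\bar L$.

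Third, I would invoke the classical classification result for ordinary boundary triples (\cite[Thm.~3.1]{BeMi14}, going back to Derkach-Malamud): a restriction of $\bar L$ defined by $(\hat B f, \hat A f)\in \mathfrak S$ is self-adjoint if and only if the relation $\mathfrak S$ is self-adjoint in $\dH$. Combined with the translation of step two, this is exactly the criterion claimed: $L_\mathfrak R$ is self-adjoint iff $M^{-1}(\mathfrak R - T(\lambda)) M_-^{-1}$ is self-adjoint, under the compatibility condition $D(\mathfrak R)\subset M_-D(T)$.

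The main obstacle is the bookkeeping in step two and the domain condition: one needs that $\tilde A\pi$ indeed lands in $\mathscr G = \rg M$ so that $M^{-1}\tilde A\pi$ is defined on all of $D(\bar L)$, and that the shift by $T(\lambda)$ and the sandwiching by $M^{-1}$ and $M_-^{-1}$ preserve self-adjointness of relations. The realness of $\lambda$ enters precisely here, since it makes $T(\lambda)$ symmetric by \autoref{rem:appendix} (applied to $T(\lambda)^*=T(\bar\lambda)=T(\lambda)$), so that $\mathfrak R\mapsto \mathfrak R - T(\lambda)$ is an involution on symmetric relations. Independence of the criterion on the specific $\lambda$ then follows from the fact that $T(\lambda) - T(\mu)$ is bounded and self-adjoint for $\lambda,\mu\in \R\cap \rho(L)$ by \autoref{prop:rechenregeln}~(iii), so shifting $\lambda$ amounts to adding a bounded self-adjoint operator to the relation.
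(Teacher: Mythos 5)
Your reduction---transferring the boundary condition to the ordinary boundary triple $(\dH,\hat B,\hat A)$ via $M$ and $M_-$, translating $\mathfrak R$ into $M^{-1}(\mathfrak R - T(\lambda))M_-^{-1}$ using $\tilde A f=\tilde A\pi f+T(\lambda)\tilde Bf$, and then invoking the Derkach--Malamud classification for ordinary boundary triples---is precisely the strategy the paper points to, since it gives no proof of its own but cites \cite[Cor.~3.5]{BeMi14} and sketches exactly this construction in the paragraphs preceding the statement. The only slip is the parenthetical claim that the transformed Weyl function ``vanishes identically'': it vanishes only at the chosen point $\lambda$, but this plays no role in the rest of your argument, which is otherwise correct.
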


\bigskip

\subsection*{Acknowledgements}

The authors thank the Department of Mathematics  at the Ludwig-Maximilians University of Munich, where part of this research was conducted, for its hospitality.

%

\vspace*{5em}




\end{document}